\documentclass{article}

\usepackage[preprint]{neurips_2026}

\usepackage[utf8]{inputenc} 
\usepackage[T1]{fontenc}    
\usepackage{hyperref}       
\usepackage{url}            
\usepackage{booktabs}       
\usepackage{amsfonts}       
\usepackage{nicefrac}       
\usepackage{microtype}      
\usepackage{xcolor}         
\usepackage{amsmath}
\usepackage{booktabs}
\usepackage{amssymb}
\usepackage{amsthm}
\usepackage{float}
\usepackage{tikz} 
\usepackage{comment}
\usepackage{pgfplots}
\usepackage{algorithm,algorithmic}
\newtheorem{theorem}{Theorem}
\newtheorem{proposition}{Proposition}
\newtheorem{definition}{Definition}
\newtheorem{corollary}{Corollary}
\newtheorem{lemma}{Lemma}
\newtheorem{remark}{Remark}
\newtheorem{assumption}{Assumption}[section]

\newcommand{\R}{\mathbb{R}}
\newcommand{\E}{\mathbb{E}}

\newcommand{\prox}{\ensuremath{\operatorname{prox}}}
\newcommand{\zprox}{\ensuremath{\operatorname{zprox}}}
\newcommand{\szprox}{\ensuremath{\widehat{\operatorname{zprox}}}}

\newcommand{\dist}{\ensuremath{\operatorname{dist}}}

\newcommand{\Fix}
{\mathrm{Fix}}

\newcommand{\osc}{\ensuremath{\operatorname{osc}}}

\DeclareMathOperator*{\argmin}{arg\,min}

\title{Convergence of zeroth-order proximal point algorithms in the high-temperature regime}

\author{%
  Emanuele Naldi \\
  MaLGa, DIMA\\
  Universit\`a degli Studi di Genova\\
  Genoa, Italy \\
  \texttt{emanuele.naldi@edu.unige.it} \\
  \And
  Hippolyte Labarri\`ere \\
  MaLGa, DIBRIS\\
  Universit\`a degli Studi di Genova\\
  Genoa, Italy \\
  \texttt{hippolyte.labarriere@edu.unige.it} \\
  \And
  Cesare Molinari \\
  MaLGa, DIMA\\
  Universit\`a degli Studi di Genova\\
  Genoa, Italy \\
  \texttt{cesare.molinari@edu.unige.it} \\
  \And
  Silvia Villa \\
  MaLGa, DIMA\\
  Universit\`a degli Studi di Genova\\
  Genoa, Italy \\
  \texttt{silvia.villa@unige.it} 
}

\begin{document}

\maketitle

\begin{abstract}
Efficient methods for non-convex black-box optimization largely rely on sampling. In this context, the Zeroth-Order Proximal Operator (ZOPO) and the corresponding Zeroth-Order Proximal Point Algorithm (ZOPPA) have attracted significant interest, as they combine the advantage of requiring only objective evaluations with the powerful theoretical framework of proximal point algorithms. ZOPO depends on a temperature parameter which, when going to zero, reduces ZOPO to the exact proximal operator. By exploiting this property, the vanishing-temperature regime has been leveraged in several works to obtain theoretical guarantees via inexact proximal methods. However, this regime is computationally unsustainable when sampling is used to estimate ZOPO, since the corresponding Monte Carlo estimators suffer from severe variance issues. We therefore propose a comprehensive analysis of ZOPO for any fixed positive temperature, and prove convergence of ZOPPA under minimal assumptions on the objective function. We do so by demonstrating that ZOPPA can be interpreted as an exact proximal point method applied to an auxiliary smoothed objective, rather than an inexact method on the original function. Importantly, we further derive explicit guarantees connecting this smoothed problem back to the original objective and establish convergence results for the sampled method (S-ZOPPA) at a fixed temperature.
\end{abstract}

\section{Introduction}\label{sec:sbnsm}

We consider the problem of minimizing a function through pointwise evaluations only. More precisely, given an objective $f:\mathbb{R}^d\to\mathbb{R}$, we seek to solve
\begin{equation}
\label{eq:P}\tag{P}
    \min_{x\in\mathbb{R}^d} f(x),
\end{equation}
in a setting where derivatives of $f$ are unavailable, or too costly to compute. Such problems are commonly referred to as zeroth-order, derivative-free, or black-box optimization problems \cite{Spall,Conn2009,Larson2019}. They arise whenever the objective is specified implicitly through a simulation or a complex computational pipeline, so that optimization must proceed from function values alone \cite{Salimans2017EvolutionSA,Mania18,pmlr-v80-choromanski18a,flaxman2005online}.
In machine learning, black-box optimization arises for instance in adversarial attacks \cite{chen2017zoo} and  to overcome the extreme memory requirements of standard backpropagation during LLMs fine tuning \cite{zhang2024revisiting}.
A common strategy in zeroth-order optimization is to use function evaluations to recover, either explicitly or implicitly, local first-order information \cite{Spall,Conn2009}. Finite-difference and random-direction methods \cite{Conn2009,nest} follow this principle by constructing gradient surrogates and then embedding them in variants of gradient descent. This viewpoint has led to a rich class of algorithms and convergence guarantees. However, gradient descent is only one of the basic mechanisms underlying first-order optimization \cite{BeckBook}. Another central object for first order optimization is the proximal operator, which provides stable implicit updates and plays a fundamental role in optimization, variational analysis, and splitting methods \cite{combettes2005signal}. The question whether it is possible to design zeroth order analogues of proximal algorithms has been recently answered in \cite{osher2023hamilton} (see also \cite{crandall1984}), where the Zeroth-Order Proximal Operator (ZOPO) has been introduced. Given parameters \(\lambda>0\) and \(\delta>0\), ZOPO is
defined by
\begin{equation}\label{eq:def_zprox_intro}
   \zprox^\delta_{\lambda, f}(x)
    =
    \frac{
    \mathbb{E}_{y\sim\mathcal{N}(x,\lambda\delta I)}
    \left[y\exp(-f(y)/\delta)\right]
    }{
    \mathbb{E}_{y\sim\mathcal{N}(x,\lambda\delta I)}
    \left[\exp(-f(y)/\delta)\right]
    }.
\end{equation}
This operator replaces the variational minimization defining the classical proximal map by a
Gibbs-weighted average of zeroth-order queries around the current point. Points with lower objective
value receive exponentially larger weight, yielding an implicit descent mechanism that uses only
function evaluations. The ideal zeroth-order proximal point method, which we call ZOPPA \cite{osher2023hamilton} (also introduced as Consensus Hopping (CH) in \cite{Fornasier2024}), iterates
\begin{equation}\label{eq:ZOPPA_intro}
    x^{k+1}
    =
    \zprox^\delta_{\lambda,f}(x^k).
\end{equation}
Although this update is conceptually simple, the expectations in \eqref{eq:def_zprox_intro} are
typically intractable. We therefore consider its Monte Carlo approximation, the Stochastic
Zeroth-Order Proximal Point Algorithm (S-ZOPPA):
\begin{equation}\label{eq:S-ZOPPA_intro}
    x^{k+1}
    =
    \frac{
    \sum_{i=1}^N y_i \exp(-f(y_i)/\delta)
    }{
    \sum_{i=1}^N \exp(-f(y_i)/\delta)
    },
    \qquad
    y_i\sim\mathcal{N}(x^k,\lambda\delta I).
\end{equation}
Thus, ZOPPA is the population-level proximal iteration, while S-ZOPPA is its practical sampled implementation. This distinction allows us to separate the deterministic properties of the zeroth-order proximal map from the statistical error introduced by Monte Carlo sampling.

\paragraph{The high-temperature regime.} The main contribution of this work is to focus on the behavior of ZOPPA and S-ZOPPA when the temperature parameter \(\delta\) is bounded away from \(0\). This perspective distinguishes this paper from the increasing number of works \cite{osher2023hamilton, zhang2024inexact, di2026operator, Fornasier2026} that build on the vanishing-temperature regime and the inexact proximal operator viewpoint and is particularly motivated by sampling issues. Indeed, S-ZOPPA is built around a Monte Carlo estimation of ZOPO \eqref{eq:def_zprox_intro} which has the property of tending to the exact value of the proximal operator when \(\delta\to0\) (see the orange curve on Figure~\ref{fig:intro_sampling} (c)). However, estimating it turns back to approximating  \(\int \exp(-f(y)/\delta)\exp(-\|x-y\|^2/2\lambda\delta) dy\) which is increasingly hard as \(\delta\) goes to \(0\) (see (a) and (b) in Figure~\ref{fig:intro_sampling}). This ultimately leads to extremely inaccurate approximations when \(\delta\) is small, for Monte Carlo estimation with a fixed number of samples, as shown by the blue curve in Figure~\ref{fig:intro_sampling} (c). This illustrates that, already in this simple example, the most reliable sampling regime is not the vanishing-temperature limit, but an intermediate range of positive temperatures where ZOPO remains close to the proximal point while its Monte Carlo approximation is still stable.

\begin{figure}[htbp]
    \centering
    \tikzset{
        mathfuncs/.style={
            declare function={
                f(\y) = abs(\y);
                target(\y,\d) = exp(-f(\y)/\d);
                gauss(\y,\x,\d,\l) = exp(-(\y-\x)^2 / (2*\l*\d)); 
            }
        }
    }
    
    \def\xval{2}
    \def\lambdaval{1}
    \def\deltalarge{1}
    \def\deltasmall{0.01}
    \def\trueprox{1}

    \begin{minipage}[t]{0.31\linewidth} 
        \centering
        (a) Large \(\delta\)\par\vspace{0.15cm}
        \begin{tikzpicture}[mathfuncs]
        \begin{axis}[
            width=\linewidth, 
            height=4.5cm, 
            xlabel={$y$},
            ymin=-0.05, ymax=1.15,
            xmin=-2.5, xmax=4.5,
            domain=-2.5:4.5,
            samples=2000,
            axis lines=left
        ]
            \addplot [very thick, red] {target(x,\deltalarge)};
            \addplot [very thick, blue, dashed] {gauss(x,\xval,\deltalarge,\lambdaval)};
        \end{axis}
        \end{tikzpicture}
    \end{minipage}%
    \hfill
    \begin{minipage}[t]{0.31\linewidth} 
        \centering
        (b) Small \(\delta\)\par\vspace{0.15cm}
        \begin{tikzpicture}[mathfuncs]
        \begin{axis}[
            width=\linewidth, 
            height=4.5cm, 
            xlabel={$y$},
            ymin=-0.05, ymax=1.15,
            xmin=-2.5, xmax=4.5,
            domain=-2.5:4.5,
            samples=5000, 
            axis lines=left
        ]
            \addplot [very thick, red] {target(x,\deltasmall)};
            \addplot [very thick, blue, dashed] {gauss(x,\xval,\deltasmall,\lambdaval)};
        \end{axis}
        \end{tikzpicture}
    \end{minipage}%
    \hfill
    \begin{minipage}[t]{0.34\linewidth} 
        \centering
        (c) Impact on sampling\par\vspace{0.15cm}
        \includegraphics[width=\linewidth]{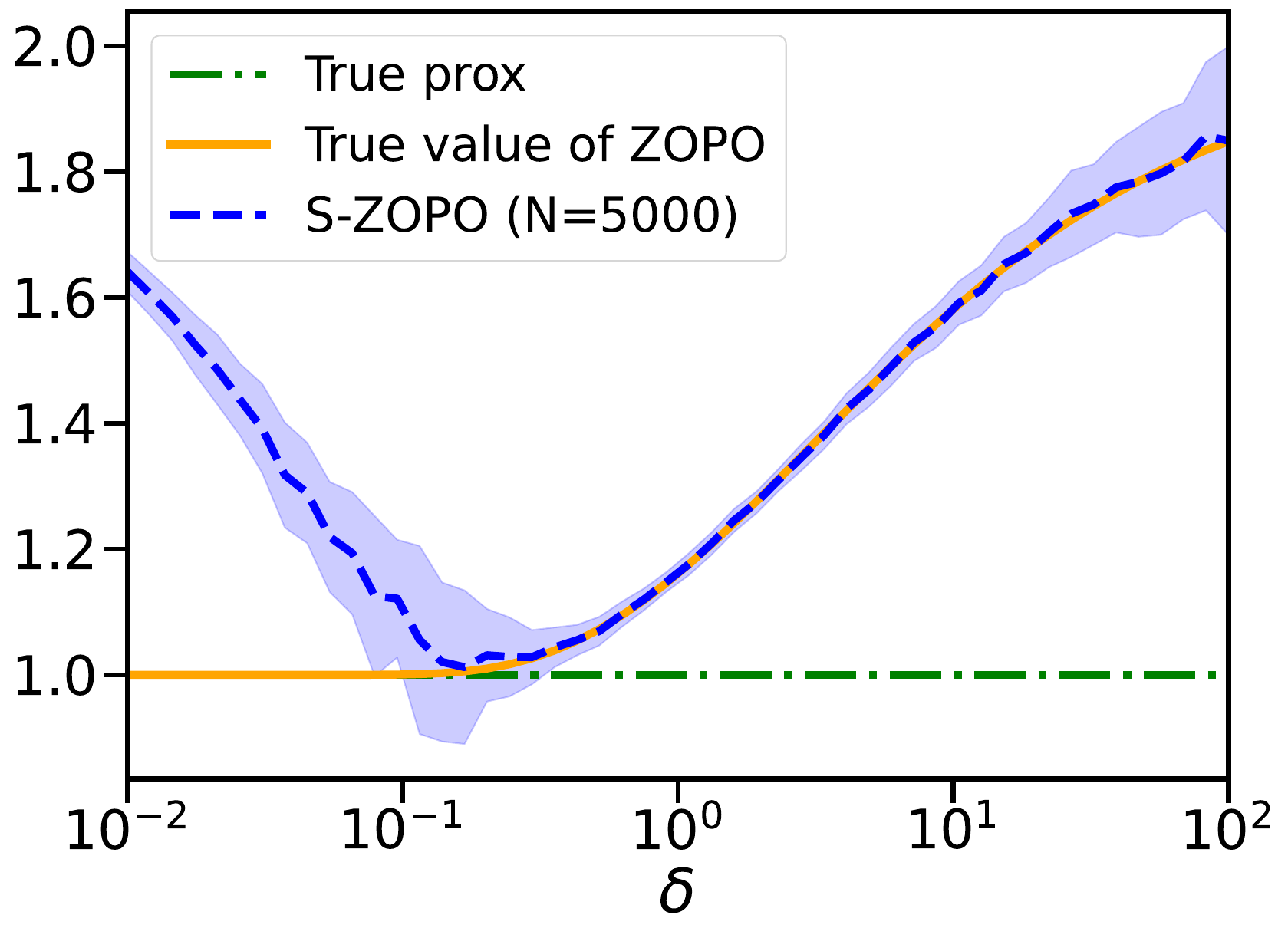}
    \end{minipage}

    \caption{Sampling difficulty as \(\delta\) decreases for \(f:x\mapsto |x|\), with \(x=2\) and \(\lambda=1\). Left and center: comparison of the objective weight \(\exp(-f/\delta)\) and the Gaussian proposal weight for large and small \(\delta\). Right: classical prox, exact ZOPO, and S-ZOPO approximation as functions of \(\delta\).}
    \label{fig:intro_sampling}
\end{figure}

\paragraph{Contributions.}
Our contributions can be summarized as follows.

\begin{itemize}
    \item \textbf{High-temperature analysis of ZOPPA.}
    We provide the first convergence analysis of ZOPPA at fixed positive temperature in the nonconvex setting, under minimal assumptions on \(f\). In contrast to previous works, our analysis does not view ZOPO as an inexact approximation of the proximal operator of \(f\) but as an exact proximal point method applied to an auxiliary objective related to the soft Moreau envelope of \(f\). This allows us to show that the approximation error does not accumulate along the iterations, and therefore that no vanishing-temperature schedule is required. 
    \item \textbf{Connection with the original objective \(f\).}
    Although our analysis is carried out through a smoothed auxiliary objective, our goal is the minimization of the original function \(f\). To this end, we establish links between the behavior of the iterates \((x^k)_k\) on the smoothed landscape and approximate stationarity for \(f\), by controlling quantities such as \(\|\nabla f(x^k)\|\) along the trajectory. In the convex setting, we further control the function values \(f(x^k)-\min f\).
    \item \textbf{Role of the stepsize parameter \(\lambda\).} We investigate in detail the effect of the parameter \(\lambda\). On the theoretical side, we prove a convexification result showing that large values of \(\lambda\) can simplify the optimization landscape. In practice, this motivates a continuation strategy in which \(\lambda\)
    is decreased over time: large values help avoid poor local minima, while smaller values improve the approximation of stationary points of the original objective.
    \item \textbf{Convergence of S-ZOPPA and sampling considerations.}
    For the sampled algorithm S-ZOPPA, we establish the first convergence result in the convex setting for a fixed high temperature. We further analyze the sampling behavior of the method, both theoretically and experimentally. In particular, using effective sample size (ESS), we show that in the high-temperature regime  ZOPO can be estimated reliably, whereas the small-\(\delta\) regime degenerates toward a form of random search.
\end{itemize}
The paper is organized as follows: in Section~\ref{sec:setting} we introduce the setting and the ZOPPA algorithm. Section \ref{sec:main_results} is the core of the paper: there we state the main convergence results for ZOPPA and S-ZOPPA in the high temperature regime. We consider nonconvex and convex settings.  Moreover we describe the convexification effect induced by a large $\lambda$. Finally, in Section~\ref{sec:experiments} we present numerical experiments clarifying the roles of $\lambda$ and $\delta$ in practice and suggest an effective sampling procedure to implement S-ZOPPA. 
\section{Problem setting and algorithm }\label{sec:setting}
We consider the problem of minimizing $f$ in \eqref{eq:P}, under the following standing assumptions.
\begin{assumption}\label{ass:on_f} Fix $\delta>0$. 
    \begin{itemize} 
        \item [A1)] The function $f:\R^d \to \R$ is continuous and has at least a minimizer. 
        \item[A2)] It holds $C_\delta:=\int \exp\left(-\frac{f(y)}{\delta}\right) dy <+\infty$, i.e. $\exp\left(-\frac{f}{\delta}\right)\in L^1(\mathbb{R}^d)$.
    \end{itemize}
\end{assumption}
Assumption~\ref{ass:on_f} is satisfied, for instance, whenever \(f\) is continuous and grows faster than logarithmically at infinity, namely $\lim_{\|x\|\to\infty}\frac{f(x)}{\log(\|x\|)}=+\infty$, which is a really mild assumption. Under Assumption~\ref{ass:on_f}, for given $\delta>0$ and $\lambda>0$, the operator $\zprox_{\lambda,f}^\delta: \R^d \to \R^d$ defined in \eqref{eq:def_zprox_intro}, is well defined and we consider the following algorithm. 

\begin{algorithm}[H] 
\caption{ZOPPA: Zeroth-Order Proximal Point Algorithm} \label{alg:zoppa}
\begin{algorithmic}
  \STATE{{\bf Input:} $x^0 \in \R^d$, $\delta \in \R_{>0}$, $\lambda\in \R_{>0}$}
     \FOR{$k = 0, 1, \cdots$}
         \STATE{$x^{k + 1} = \zprox^{\delta}_{\lambda, f}(x^k)$}
     \ENDFOR
 \end{algorithmic}    
 \end{algorithm}
ZOPPA has been introduced in \cite{osher2023hamilton} and is inspired by the proximal point algorithm \cite{martinet1970regularisation,rockafellar1976monotone}, but it has the advantage that it can be approximated by Monte Carlo sampling in practice, using only function evaluations. ZOPO is strictly connected with viscous Hamilton-Jacobi equations (see also Appendix \ref{app:HamiltonJacobi}). It depends on the two parameters $\lambda$ and $\delta$, playing a different role, whose choice is crucial as discussed above. Interestingly, we can define an analogue of the Moreau envelope (see Appendix~\ref{app:HamiltonJacobi} for the definition), that we name \emph{soft Moreau envelope}:
\[\forall x\in\R^d,\quad f^{\lambda,\delta}(x)=-\delta \log \E_{y\sim\mathcal N(x,\lambda\delta I)}
\left[
\exp\left(-f(y)/\delta\right)
\right],\]
and the ZOPO is then computed by analogy with the formula relating the gradient of the Moreau envelope and the proximity operator, namely
\[\forall x \in\R^d,\quad \zprox^{\delta}_{\lambda, f}(x)=x-\lambda\nabla f^{\lambda,\delta}(x).\]
although this interpretation is already meaningful, without any regularity of $\nabla f^{\lambda,\delta}$ it is not sufficient to show convergence of ZOPPA. For this reason we take a step further. From a technical perspective one of our main contributions is the interpretation of ZOPPA as a Minimum Mean Square Error Estimator (MMSE) and as a proximal point algorithm applied to another function, building on the results in \cite{Gribonval11}. This point of view is crucial in our analysis since it allow us to derive convergence results also in the high-temperature regime, namely for non-vanishing $\delta$. 

\begin{theorem}\label{thm:zoppa_is_prox}
    Suppose Assumption~\ref{ass:on_f} is satisfied. Then, $\zprox^{\delta}_{\lambda, f}$ is $C^{\infty}$, bijective and \begin{equation}
     \zprox^{\delta}_{\lambda, f}= \prox_{\lambda \delta H_{\lambda,\delta}},
\end{equation}
with $H_{\lambda,\delta}:\R^d \to \R $ defined by
\begin{equation}
H_{\lambda,\delta}(x)= 
    -\frac{1}{2\lambda\delta}\big\|(\zprox^{\delta}_{\lambda, f})^{-1}(x)-x\big\|^2+\frac{1}{\delta}f^{\lambda,\delta}\!\big((\zprox^{\delta}_{\lambda, f})^{-1}(x)\big).
\label{eq:H_def}
\end{equation}
In particular, the sequence $(x^k)_{k\in\mathbb{N}}$ generated by
Algorithm \ref{alg:zoppa} is a proximal point iteration.
\end{theorem}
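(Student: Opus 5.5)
The plan is to recast ZOPO as a Gaussian-denoising MMSE estimator and then invoke the characterization of MMSE denoisers as proximal maps due to Gribonval \cite{Gribonval11}.

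\textbf{Step 1: MMSE representation.} Define the probability density $p_X(y)=C_\delta^{-1}\exp(-f(y)/\delta)$, which is well defined by Assumption~\ref{ass:on_f} (A2). Consider the observation model $Z=X+\sigma W$ with $W\sim\mathcal N(0,I)$ and $\sigma^2=\lambda\delta$. Bayes' rule gives
\[
\E[X\mid Z=x]=\frac{\int y\,e^{-f(y)/\delta}e^{-\|x-y\|^2/(2\lambda\delta)}dy}{\int e^{-f(y)/\delta}e^{-\|x-y\|^2/(2\lambda\delta)}dy}=\zprox^\delta_{\lambda,f}(x),
\]
so ZOPO is exactly the MMSE denoiser of the prior $p_X$ at noise level $\sigma^2=\lambda\delta$. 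The density of $Z$ is the Gaussian convolution $p_Z=p_X*\mathcal N(0,\lambda\delta I)$. Up to an additive constant, $-\delta\log p_Z=f^{\lambda,\delta}$.

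\textbf{Step 2: regularity and Tweedie's formula.} Since $p_Z$ is a convolution of an $L^1$ function with a Gaussian, it is $C^\infty$ and strictly positive. Differentiation under the integral sign is justified by Gaussian moment bounds together with $e^{-f/\delta}\in L^1$. Tweedie's formula then gives $\zprox(x)=x+\lambda\delta\nabla\log p_Z(x)=x-\lambda\nabla f^{\lambda,\delta}(x)$, which is $C^\infty$. Its Jacobian is
\[
I+\lambda\delta\nabla^2\log p_Z=\tfrac{1}{\lambda\delta}\mathrm{Cov}(X\mid Z=x).
\]
This covariance is positive definite, because the posterior has a density on $\R^d$ and therefore is not supported on a hyperplane.

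\textbf{Step 3: bijectivity.} This is the main obstacle. The map $\zprox=\nabla\psi$ with $\psi(x)=\tfrac12\|x\|^2-\lambda f^{\lambda,\delta}(x)$, and $\psi$ is strictly convex because its Hessian is positive definite. Hence $\zprox$ is injective, and it is a local diffeomorphism by the inverse function theorem. Surjectivity onto $\R^d$ is the delicate point. I would follow Gribonval's argument, which shows that $\psi$ is proper convex with $\partial\psi$ ranging over all of $\R^d$. To prove this I would show $\psi^*$ is finite everywhere, i.e.\ that $\psi(x)-\langle u,x\rangle\to\infty$ for every $u$. The route is a lower bound $\lambda f^{\lambda,\delta}(x)\le \tfrac12\|x\|^2-c(\|x\|)$ with $c$ superlinear. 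This comes from Jensen restricted to a neighbourhood of a minimizer of $f$ (existence is given by A1): near the minimizer $e^{-f/\delta}$ is bounded below, which yields
\[
f^{\lambda,\delta}(x)\le \tfrac{1}{2\lambda}\|x-x^*\|^2+C .
\]
This makes $\psi$ grow at least linearly in every direction, and combined with strict convexity it gives that the range is $\R^d$. If this estimate is too weak, I would instead use that the range of $\nabla\psi$ is dense in $\mathrm{int}\,\mathrm{dom}\,\psi^*$ and argue directly that the posterior mean can be driven anywhere.

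\textbf{Step 4: identifying $H_{\lambda,\delta}$.} Set $\varphi=\psi^*$, so that $\zprox^{-1}=\nabla\varphi$. Define $g(u)=\varphi(u)-\tfrac12\|u\|^2$. For $u=\zprox(x)$, the optimality condition for $\prox_g(u)$ reads $0\in u-x'+\nabla g(x')$. Because $\varphi$ is convex, $\tfrac12\|\cdot-u\|^2+g=\varphi-\langle u,\cdot\rangle+\tfrac12\|u\|^2$ is convex, so this condition characterizes the minimizer, and it is attained at $x'=x$ with $\nabla\varphi(u)=x$. Rescaling gives $g=\lambda\delta H_{\lambda,\delta}$. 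Substituting $x=\zprox^{-1}(u)$ into the Fenchel identity
\[
\varphi(u)=\langle u,x\rangle-\psi(x)
\]
and expanding $\psi(x)=\tfrac12\|x\|^2-\lambda f^{\lambda,\delta}(x)$ produces $\lambda\delta H_{\lambda,\delta}(u)=-\tfrac12\|x-u\|^2+\lambda f^{\lambda,\delta}(x)$, which is exactly \eqref{eq:H_def} after division by $\lambda\delta$. Finally, $x^{k+1}=\prox_{\lambda\delta H}(x^k)$ shows that ZOPPA is a proximal point iteration.
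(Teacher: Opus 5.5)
Your Steps 1, 2 and 4 are essentially fine, but the surjectivity argument in Step 3 has a genuine gap. The Jensen bound around a minimizer $x^\star$ only gives
\[
\psi(x)=\tfrac12\|x\|^2-\lambda f^{\lambda,\delta}(x)\;\ge\;\langle x,x^\star\rangle-\tfrac12\|x^\star\|^2-\lambda C .
\]
This is an affine minorant, which every finite convex function has, and it shows only that $\psi^*(x^\star)<\infty$. It does not make $\psi$ grow in every direction: along $-x^\star$ the bound tends to $-\infty$, and if $x^\star=0$ it is just boundedness from below. It is also far from the superlinear $c(\|x\|)$ you correctly identified as the target.

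Moreover, linear growth plus strict convexity would not give surjectivity anyway. For example, $\psi(x)=\sqrt{1+\|x\|^2}$ is strictly convex and grows linearly, yet $\nabla\psi(\R^d)$ is the open unit ball. Your fallback (``range dense in $\operatorname{int}\operatorname{dom}\psi^*$, drive the posterior mean anywhere'') only restates what must be proved, namely $\operatorname{dom}\psi^*=\R^d$. Appealing to Gribonval does not close it either, since those results work on the image of the denoiser. Surjectivity genuinely depends on the prior having full support: for a compactly supported prior, the MMSE denoiser maps into the convex hull of the support. A bound at a single point cannot capture this. The gap also affects Step 4, which needs $\psi^*$ finite and differentiable on all of $\R^d$ with $\nabla\psi^*=(\zprox^\delta_{\lambda,f})^{-1}$, and the statement itself, which needs $H_{\lambda,\delta}$ defined on all of $\R^d$.

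The repair is short, because your argument never actually used the minimizer. Since $f$ is continuous and real-valued, $e^{-f/\delta}$ is bounded below by a positive constant on a ball around every $z\in\R^d$. The same Jensen argument then gives $f^{\lambda,\delta}(x)\le\frac{1}{2\lambda}\|x-z\|^2+C_z$ for every $z$, hence $\psi^*(z)\le\frac12\|z\|^2+\lambda C_z<\infty$ for all $z$. So $\psi^*$ is finite and convex on $\R^d$, $\partial\psi^*(z)\neq\emptyset$, and any $x\in\partial\psi^*(z)$ satisfies $\nabla\psi(x)=z$. Since $C_z$ is locally bounded, taking $z=Rx/\|x\|$ also gives $\psi(x)/\|x\|\to\infty$, which is the superlinear growth you wanted.

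The paper reaches the same conclusion through exponential families. It takes $\nu(dy)=e^{-f(y)/\delta-\|y\|^2/(2\lambda\delta)}dy$ with log-partition function $A$, so that $\zprox^\delta_{\lambda,f}(x)=\nabla A(x/(\lambda\delta))$; your $\psi$ is $\lambda\delta A(\cdot/(\lambda\delta))$ up to a constant. The mean-parameter set is all of $\R^d$ because every Gaussian density is a density with respect to $\nu$, and the mean-parametrization theorem then gives $\nabla A(\R^d)=\R^d$. With the fix, your proof is a self-contained convex-duality alternative to the paper's citations of Gribonval (injectivity, smoothness, prox representation) and Wainwright--Jordan (surjectivity).

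One minor slip in Step 4: the optimality condition for $\prox_g(x)$ at $u$ is $0=u-x+\nabla g(u)$, i.e.\ $\nabla\varphi(u)=x$. As written, the arguments and the sign of $u-x'$ are swapped, although your final check $\nabla\varphi(u)=x$ is the correct one.
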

The proof is given in Appendix \ref{app:proof_of_zoppa_is_prox}. The soft Moreau envelope and Theorem~\ref{thm:zoppa_is_prox} are the main tools for our analysis. We use the interpretation as proximal point algorithm to prove convergence to fixed points of ZOPO under the general Assumption~\ref{ass:on_f} on $f$, then, thanks to the interpretation as gradient descent we characterize the limit points as stationary points of the soft Moreau envelope $f^{\lambda, \delta}$.

\section{Main results}\label{sec:main_results}

\subsection{Convergence of ZOPPA}
With the tools discussed above, we are able to prove the following convergence result for ZOPPA, which is based solely on Assumption~\ref{ass:on_f}. The proof is given in Appendix \ref{app:proof_of_zoppa_convergence}.

\begin{theorem}\label{thm:zoppa_convergence}
Let $f:\R^d\to \R$ satisfy Assumption~\ref{ass:on_f}. Then, the sequence $(x^k)_{k\in\mathbb{N}}$ generated by Algorithm \ref{alg:zoppa} converges to some $x_{\lambda,\delta}^\star$ such that $x_{\lambda,\delta}^\star=\zprox_{\lambda, f}^{\delta}(x_{\lambda,\delta}^\star)$, i.e.,
\[x^k \to x_{\lambda,\delta}^\star, \quad \text{with }\  \nabla f^{\lambda,\delta}(x_{\lambda,\delta}^\star)=0.\]
Moreover, \(\|\nabla f^{\lambda,\delta}(x^k)\|\to 0\) and  \(\sum_k \|\nabla f^{\lambda,\delta}(x^k)\|< +\infty.\)
\end{theorem}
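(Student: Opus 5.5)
Via Theorem~\ref{thm:zoppa_is_prox}, the iteration is $x^{k+1}=\prox_{\lambda\delta H_{\lambda,\delta}}(x^k)$, and in parallel $x^{k+1}=x^k-\lambda\nabla f^{\lambda,\delta}(x^k)$. I would combine the two viewpoints: the proximal viewpoint supplies a Lyapunov function, and the gradient viewpoint identifies limit points as stationary points of $f^{\lambda,\delta}$.

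\textbf{Descent and square-summability.} By the definition of the proximal map,
\[
\lambda\delta H_{\lambda,\delta}(x^{k+1})+\tfrac12\|x^{k+1}-x^k\|^2\le \lambda\delta H_{\lambda,\delta}(x^k),
\]
so $H_{\lambda,\delta}(x^k)$ is nonincreasing. To bound it below I would use the MMSE structure of \cite{Gribonval11}. Write $f^{\lambda,\delta}=-\delta\log(p*g)+$const, where $p\propto e^{-f/\delta}$ and $g$ is the Gaussian of variance $\lambda\delta$. Then $\zprox^{\delta}_{\lambda,f}=\nabla\psi$ with $\psi(x)=\tfrac12\|x\|^2-\lambda f^{\lambda,\delta}(x)$. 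The function $\psi$ is convex, with Hessian equal to $\tfrac{1}{\lambda\delta}$ times the posterior covariance (positive definite). Hence $\zprox$ is a gradient of a convex function, and $H_{\lambda,\delta}$ is, up to constants, a Fenchel-conjugate expression. From \eqref{eq:H_def}, $\delta H\ge \tfrac{-1}{2\lambda}\|u-\zprox(u)\|^2+f^{\lambda,\delta}(u)$ with $u$ the preimage, and $\|u-\zprox(u)\|=\lambda\|\nabla f^{\lambda,\delta}(u)\|$. So I need a lower bound on $f^{\lambda,\delta}-\tfrac{\lambda}{2}\|\nabla f^{\lambda,\delta}\|^2$.

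I would handle this with Jensen on the posterior: $\tfrac{\lambda}{2}\|\nabla f^{\lambda,\delta}\|^2=\tfrac1{2\lambda}\|\E[y]-x\|^2\le \tfrac{1}{2\lambda}\E\|y-x\|^2$. Combining this with $f^{\lambda,\delta}(x)=\E_{\text{post}}[f(y)+\tfrac{\|y-x\|^2}{2\lambda}]-\delta\,\mathrm{Ent}$-type identities (Gibbs variational principle) should give $f^{\lambda,\delta}-\tfrac\lambda2\|\nabla f^{\lambda,\delta}\|^2\ge \min f-\delta(\text{entropy term})$. The entropy of the posterior is bounded because it is dominated by a density with finite normalisation $C_\delta$ (A2). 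Alternatively, $H\ge \inf$ follows if $\min f$ finite and $-\log$ of a bounded density is bounded below. This lower bound is the main obstacle, and I would try the Gibbs variational formula first. Once it holds, $\sum_k\|x^{k+1}-x^k\|^2<\infty$, i.e.\ $\sum\|\nabla f^{\lambda,\delta}(x^k)\|^2<\infty$.

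\textbf{Boundedness and convergence of the whole sequence.} Boundedness comes from A2: since $\zprox(x)-x$ is a posterior mean displacement, and the posterior concentrates on regions of small $f$, I expect coercivity of $H$ (or that $\zprox$ maps into a bounded region modulo the contraction $\|x^{k+1}\|\le \|x^k\|$ for large $\|x^k\|$). Given boundedness, I would get convergence of the full sequence (and hence summability of $\|x^{k+1}-x^k\|=\lambda\|\nabla f^{\lambda,\delta}(x^k)\|$, not just square-summability) by a Kurdyka--Łojasiewicz argument. $H_{\lambda,\delta}$ is real analytic, since $f^{\lambda,\delta}$ is analytic as a log of a Gaussian convolution and $\zprox^{-1}$ is analytic by the inverse function theorem with positive definite Jacobian. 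Analytic functions satisfy the Łojasiewicz inequality, and the standard Attouch--Bolte proximal point analysis then yields finite length $\sum\|x^{k+1}-x^k\|<\infty$. So $x^k\to x^\star_{\lambda,\delta}$.

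\textbf{Identifying the limit.} By continuity of $\zprox$, the limit satisfies $x^\star=\zprox(x^\star)$, hence $\nabla f^{\lambda,\delta}(x^\star)=0$. Finally, $\|\nabla f^{\lambda,\delta}(x^k)\|=\|x^{k+1}-x^k\|/\lambda\to0$ and is summable.
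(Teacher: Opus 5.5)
\textbf{The gap is boundedness of $(x^k)_k$.} The Kurdyka--{\L}ojasiewicz argument needs this to have a cluster point, and you do not prove it. You only say you ``expect'' $H_{\lambda,\delta}$ to be coercive because the posterior concentrates where $f$ is small. The alternative you offer is false. $\zprox^{\delta}_{\lambda,f}$ is onto $\R^d$, so it does not map into a bounded region. Nothing in Assumption~\ref{ass:on_f} gives $\|\zprox^{\delta}_{\lambda,f}(x)\|\le\|x\|$ for large $\|x\|$ either. For example, let $e^{-f/\delta}$ have narrow integrable bumps at the points $n^2$ on a rapidly decaying background. At $x=n^2-1$ the posterior sits on the bump, so $\zprox^{\delta}_{\lambda,f}(x)\approx n^2>x$.

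The missing idea is coercivity of the soft Moreau envelope, and this is where (A2) really enters. Write $f^{\lambda,\delta}=-\delta\log u$, where $u$ is the Gaussian convolution of the $L^1$ function $e^{-f/\delta}$. Then $u$ vanishes at infinity, so $f^{\lambda,\delta}(x)\to+\infty$ as $\|x\|\to\infty$. There are two ways to transfer this to the iterates:
\begin{itemize}
\item Test the prox inequality at $y=z$: $\delta H_{\lambda,\delta}(z)\ge \delta H_{\lambda,\delta}(\zprox^{\delta}_{\lambda,f}(z))+\frac{1}{2\lambda}\|\zprox^{\delta}_{\lambda,f}(z)-z\|^2=f^{\lambda,\delta}(z)$. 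Hence $H_{\lambda,\delta}$ is coercive, and your descent keeps $x^k$ in a bounded sublevel set.
\item As the paper does, combine your $H$-descent with $\delta H_{\lambda,\delta}(x^k)=f^{\lambda,\delta}(x^{k-1})-\frac{1}{2\lambda}\|x^k-x^{k-1}\|^2$. This gives $f^{\lambda,\delta}(x^k)\le f^{\lambda,\delta}(x^{k-1})-\frac{1}{2\lambda}\|x^k-x^{k-1}\|^2$. Equivalently, this is the descent lemma, since convexity of your $\psi$ means $\nabla^2 f^{\lambda,\delta}\preceq\frac1\lambda I$.
\end{itemize}

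\textbf{The lower bound on $H_{\lambda,\delta}$.} This is not the main obstacle. The inequality $\delta H_{\lambda,\delta}\ge f^{\lambda,\delta}$, together with $Z_{\lambda,\delta}\le C_\delta$, settles it in one line. Moreover, once boundedness is known, continuity of $H_{\lambda,\delta}$ already makes $H_{\lambda,\delta}(x^k)$ bounded below.

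Your Gibbs route can also work, but not as sketched. The posterior entropy need not be uniformly bounded under (A1)--(A2), so you cannot split off $\min f$. Keep the two terms together instead: $\E_q[f]+\delta\E_q[\log q]=\delta\,\mathrm{KL}(q\,\|\,p)-\delta\log C_\delta\ge-\delta\log C_\delta$ with $p\propto e^{-f/\delta}$. This reproduces the paper's bound.

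\textbf{The rest of your route.} Once boundedness is fixed, applying KL to $H_{\lambda,\delta}$ instead of to $f^{\lambda,\delta}$ (as the paper does) is legitimate. $H_{\lambda,\delta}$ is analytic by the analytic inverse function theorem and global bijectivity of $\zprox^{\delta}_{\lambda,f}$. Your version also has an advantage: the relative-error condition $\nabla H_{\lambda,\delta}(x^{k+1})=(x^k-x^{k+1})/(\lambda\delta)$ holds with exactly the indices the Attouch--Bolte--Svaiter framework requires.
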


The previous theorem shows not only convergence of ZOPPA, but also that the limit points are meaningful: they are stationary points of the auxiliary objective \(f^{\lambda,\delta}\). This is relevant because \(f^{\lambda,\delta}\) can be interpreted as a smoothed version of \(f\), and it recovers \(f\) in the limit \(\lambda\to 0\). At the same time, our goal remains the minimization of the original function \(f\). The next theorem therefore quantifies how stationarity for \(f^{\lambda,\delta}\) transfers to stationarity for \(f\). The proof is given in Appendix \ref{app:proof_of_rates}.

\begin{theorem}[Rates]\label{thm:rates}
    Let $f:\R^d\to \R$ satisfy Assumption~\ref{ass:on_f} and be $L$-smooth. Let  $(x^k)_{k\in\mathbb{N}}$ be the sequence generated by Algorithm \ref{alg:zoppa}. Then, 
    \begin{enumerate}
        \item[(i)] if $\lambda < 1/L$, it holds
        \[\|\nabla f(x^k)\| \leq (1+\lambda L)\| \nabla f^{\lambda,\delta}(x^k)\| + L \sqrt{\frac{d \lambda \delta}{1-\lambda L}}.\]
        \item[(ii)] if $f$ is also $G$-Lipschitz, for any $\lambda >0$ it holds
\[\begin{aligned}\|\nabla f(x^k)\| \leq & (1+\lambda L)\| \nabla f^{\lambda,\delta}(x^k)\| \\
& \hspace{-0.5cm} + L\,
\min\left\{
\sqrt{2d \lambda\delta}\,
\exp\!\Bigl(2\sqrt{2d/\pi}\,G^2\lambda^2\Bigr),\;
\left(\sqrt{2 d}G\lambda+2\sqrt{d\lambda\delta}\right)\,
\exp\!\Bigl(\frac{G^2\lambda}{4\delta}\Bigr)
\right\}.
\end{aligned}\]
\item[(iii)] if $f(y)=\frac{\kappa}{2}\|y\|^2+V(y)$ with $\operatorname{osc}(V)= \sup V - \inf V <+\infty$, for any $\lambda>0$ it holds
\[
\|\nabla f(x^k)\|
\le
(1+\lambda L)\|\nabla f^{\lambda,\delta}(x^k)\|
+
L\sqrt{
d\,\exp{(\operatorname{osc}(V)/\delta)}
\frac{\lambda\delta}{1+\kappa\lambda}
}.
\]
    \end{enumerate}
\end{theorem}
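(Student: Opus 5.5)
The plan is to write $\nabla f^{\lambda,\delta}$ as an expectation of $\nabla f$ under the Gibbs posterior and reduce everything to a second-moment bound. For fixed $x$, let $\mu_x$ be the probability measure with density proportional to $\exp(-f(y)/\delta-\|y-x\|^2/(2\lambda\delta))$. Differentiating under the integral (justified by Assumption~\ref{ass:on_f} and Gaussian tails) gives $\nabla f^{\lambda,\delta}(x)=\frac{1}{\lambda}(x-\E_{\mu_x}[y])$. Integrating by parts in the Gaussian variable instead (legitimate since $f$ is $L$-smooth, so $\nabla f$ grows at most linearly) gives $\nabla f^{\lambda,\delta}(x)=\E_{\mu_x}[\nabla f(y)]$. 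Setting $m_x=\E_{\mu_x}[y]=\zprox^\delta_{\lambda,f}(x)$, we get
\[
\|\nabla f(x)\|\le \|\E_{\mu_x}\nabla f(y)\|+\|\nabla f(x)-\nabla f(m_x)\|+\|\E_{\mu_x}[\nabla f(m_x)-\nabla f(y)]\|.
\]
The second term is at most $L\|x-m_x\|=L\lambda\|\nabla f^{\lambda,\delta}(x)\|$. The third is at most $L\,\E_{\mu_x}\|y-m_x\|\le L\sqrt{\operatorname{tr}\operatorname{Cov}_{\mu_x}}$. This produces the common factor $(1+\lambda L)\|\nabla f^{\lambda,\delta}(x^k)\|$, and all three items reduce to bounding $\E_{\mu_x}\|y-m_x\|^2$ (or $\E_{\mu_x}\|y-x\|^2$, which also suffices, with the adjustment absorbed in the constants).

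\emph{(i)} Since $f$ is $L$-smooth, $-\log$ of the density of $\mu_x$ is $\delta^{-1}(f+\|\cdot-x\|^2/(2\lambda))$, whose Hessian is at least $\frac{1-\lambda L}{\lambda\delta}I$. For $\lambda<1/L$, $\mu_x$ is strongly log-concave, and Brascamp--Lieb (or Poincaré applied to the coordinate functions) gives $\operatorname{tr}\operatorname{Cov}\le d\lambda\delta/(1-\lambda L)$. This is exactly the stated constant.

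\emph{(iii)} Write $\mu_x\propto e^{-V/\delta}\nu_x$, where $\nu_x$ is Gaussian with precision $(\kappa+1/\lambda)/\delta$, i.e.\ covariance $\frac{\lambda\delta}{1+\kappa\lambda}I$. Bounded perturbation (Holley--Stroock) inflates the Poincaré constant by at most $e^{\operatorname{osc}(V)/\delta}$. Hence $\operatorname{tr}\operatorname{Cov}_{\mu_x}\le d\,e^{\operatorname{osc}(V)/\delta}\frac{\lambda\delta}{1+\kappa\lambda}$, which is the claim.

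\emph{(ii)} This is the main obstacle, since no convexity is available. Here I would use $\E_{\mu_x}\|y-x\|^2$. For the second bound, write $\mu_x$ as the Gaussian $\gamma=\mathcal N(x,\lambda\delta I)$ reweighted by $e^{-(f(y)-f(x))/\delta}/Z$. Lipschitzness gives $e^{-(f(y)-f(x))/\delta}\le e^{G\|y-x\|/\delta}$ and $Z\ge\E_\gamma e^{-G\|y-x\|/\delta}$. Completing the square $G r/\delta-r^2/(2\lambda\delta)$ shows the reweighting acts like a shift of size $G\lambda$ with a multiplicative factor of order $e^{G^2\lambda/(2\delta)}$. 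Taking square roots yields $(\sqrt{2d}G\lambda+2\sqrt{d\lambda\delta})e^{G^2\lambda/(4\delta)}$, with the numerical constants obtained by Cauchy--Schwarz and Jensen on $Z$. For the first bound, I would instead use the Gaussian Poincaré / log-Sobolev perturbation estimate in the Lipschitz (not bounded) form. Aida--Shigekawa or a Holley--Stroock variant for Lipschitz potentials gives a constant of the form $\lambda\delta\exp(c\,\E\|\nabla(f/\delta)\|\cdot\text{scale})$. With scale $\sqrt{\lambda\delta}$ and $\E\|g\|\approx\sqrt{2d/\pi}$, this yields the exponent $2\sqrt{2d/\pi}\,G^2\lambda^2$ together with the factor $\sqrt{2d\lambda\delta}$. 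I expect matching this exact exponent to need the most care, and I would follow a known Lipschitz-perturbation Poincaré lemma. Finally, taking the minimum of the two bounds gives (ii).
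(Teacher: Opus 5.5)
Your reduction is the paper's. You use the identity $\nabla f^{\lambda,\delta}(x)=\E_{\mu_x}[\nabla f(y)]$ and then a bound $(1+\lambda L)\|\nabla f^{\lambda,\delta}(x)\|+L\sqrt{\operatorname{tr}\Sigma_{\lambda,\delta}(x)}$. You reach it through $m_x$ by the triangle inequality, whereas the paper centers at $x$ and uses a bias--variance split plus $\sqrt{a+b}\le\sqrt a+\sqrt b$; the outcome is the same. Your (i) and (iii) are exactly its Bakry--\'Emery and Holley--Stroock arguments.

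The gap is the second term of (ii). The paper does not compute anything there. It quotes a Lipschitz-perturbation theorem for the Poincar\'e constant (Cattiaux--Guillin, Thm.~1.3(1)--(2), applied to $\mathcal N(x,\lambda\delta I)$ perturbed by $f/\delta$) for both terms of the minimum, then bounds the covariance trace by $d\,C_P$. Your density-ratio computation cannot replace this, because the quantity it bounds,
\[
R:=\frac{\E_\gamma\bigl[\|y-x\|^2e^{G\|y-x\|/\delta}\bigr]}{\E_\gamma\bigl[e^{-G\|y-x\|/\delta}\bigr]},
\]
is itself too large:
\begin{itemize}
\item By Chebyshev's association inequality and Jensen, the numerator is at least $d\lambda\delta\,e^{G\E_\gamma\|y-x\|/\delta}$.
\item By Gaussian concentration, the denominator is at most $e^{-G\E_\gamma\|y-x\|/\delta+G^2\lambda/(2\delta)}$.
\item Hence $R\ge d\lambda\delta\exp\bigl(2G\E_\gamma\|y-x\|/\delta-G^2\lambda/(2\delta)\bigr)$, with $\E_\gamma\|y-x\|\approx\sqrt{d\lambda\delta}$.
\end{itemize}
For fixed $G\sqrt{\lambda/\delta}$ and large $d$, this exceeds the target $(\sqrt{2d}G\lambda+2\sqrt{d\lambda\delta})^2e^{G^2\lambda/(2\delta)}$. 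Completing the square is a Gaussian shift only in the radial variable; the Jacobian $r^{d-1}$ turns it into a factor of order $e^{G\sqrt{d\lambda/\delta}}$, and Jensen on $Z$ costs another such factor. The worst cases for the numerator and for $Z$ come from opposite behaviours of $f$, so this route is intrinsically lossy and cannot give the dimension-free exponent $G^2\lambda/(2\delta)$. You need a genuine perturbation estimate for the Poincar\'e constant here, as the paper uses.

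For the first term, deferring to such a lemma is what the paper does. However, your bookkeeping does not produce the exponent you claim. The Lipschitz constant $G/\delta$ of $f/\delta$ times the scale $\sqrt{\lambda\delta}$ gives $G\sqrt{\lambda/\delta}$, not $G^2\lambda^2$.

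In fact the corresponding Poincar\'e bound $C_P\le 2\lambda\delta\exp(4\sqrt{2d/\pi}\,G^2\lambda^2)$ is not scale-consistent. Replacing $f$ by $f(\cdot/s)$ and $\lambda$ by $s^2\lambda$ merely dilates $\mu_x$ by $s$, yet it sends $G^2\lambda^2$ to $s^2G^2\lambda^2$. Letting $s\to0$, that bound would force $C_P\le 2\lambda\delta$ for every Lipschitz $f$. A symmetric double well, tested with $y\mapsto y$, refutes this. So do not try to match that exponent: a correct application of the perturbation lemma gives a scale-consistent exponent in terms of $G\sqrt{\lambda/\delta}$ (and $d$).
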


   Theorem~\ref{thm:rates} establishes an upper bound not suffering from error accumulation. The magnitude of  $\|\nabla f(x^k)\|$ is controlled by an asymptotically vanishing term plus a fixed constant, related to the error due to the approximation of the Moreau envelope and the proximity operator. As can be seen directly from the expression, this additional term depends on  $\delta$ and $\lambda$. In i) and ii) the term vanishes if $\lambda\delta\to 0$ and the upper bound in iii) suggests to keep $\delta$ away from zero. The bound in item $i)$ holds for $\lambda$ sufficiently small, while the bound in item $ii)$ and $iii)$ is valid for any $\lambda$, under further hypothesis on $f$. The results obtained in Theorem~\ref{thm:rates} can be contrasted to the ones implied by classical results about convergence of the proximal  point algorithm with errors \cite{Salzo2012,schmidt2011convergence}, which usually give an upper bound where the error at each step accumulates along the iterations. Indeed, classical analysis requires the errors to be summable in order to derive convergence. This is the crucial difference with our results, which show that for this specific approximation of the proximal operator, the algorithm does not  suffer from error accumulation.  In the {\em strongly convex case}, a convergence result to an approximate solution can be found in \cite[Section 4]{lauga2026}.  

\paragraph{Sketch of proof.} The proof is made of several steps as detailed below. 

{\bf Step 1 - Descent properties.} By the proximal representation established in Theorem
\ref{thm:zoppa_is_prox}, the iteration satisfies the standard proximal descent inequality for $H_{\lambda,\delta}$, i.e.,
\[H_{\lambda,\delta}(x^{k+1})\le H_{\lambda,\delta}(x^{k}) - \frac{1}{2\lambda \delta}\|x^{k+1}-x^{k}\|^2 \text{ for all } k\ge 0. \] 
From the expression of $H_{\lambda,\delta}$ in \eqref{eq:H_def} we derive the descent inequality for the soft Moreau envelope:
\begin{equation}
    f^{\lambda,\delta}(x^{k})\le f^{\lambda,\delta}(x^{k-1}) - \frac{1}{2\lambda}\|x^{k}-x^{k-1}\|^2 \text{ for all } k\ge 1. \tag{\text{Descent}}
\end{equation}
Consequently, $\sum_{k\ge 1}\|x^k-x^{k-1}\|^2<+\infty,$
and hence $\|\nabla f^{\lambda,\delta}(x^k)\|=\|x^{k+1}-x^k\|/\lambda\to 0$ as $k\to \infty$.

{\bf Step 2 - Properties of the soft Moreau envelope.}
It is possible to see that $f^{\lambda,\delta}(x)=
-\delta\log u^\delta(\lambda,x)$ where $u^\delta(\lambda,x)$ is the solution at time $\lambda$ of an heat equation with initial datum $e^{-f/\delta}$ \cite{osher2023hamilton}. This point of view provides two key properties. \textbf{Analyticity and KL inequality:}  $f^{\lambda,\delta}$  is analytic \cite[Section 2.3]{evans} and satisfies the {\L}ojasiewicz gradient inequality around each critical point \cite{Lojasiewicz1963}. \textbf{Coercivity:} since $e^{-f/\delta}$ is positive and belongs to $L^1\cap L^\infty$ the function $u^\delta(\lambda,x)$ vanishes at infinity \cite[Proposition 8.8]{folland1999real}. Therefore, \(f^{\lambda,\delta}(x)\to+\infty\) as \(\|x\|\to+\infty\), so
\(f^{\lambda,\delta}\) is coercive.

{\bf Step 3 - Finite length property}  Since \(f^{\lambda,\delta}(x^k)\) is
decreasing, the sequence \(\{x^k\}_k\) is bounded. Combining the sufficient descent estimate with $\|\nabla f^{\lambda,\delta}(x^k)\| =\|x^{k+1}-x^k\|/\lambda$,
the continuity of $f^{\lambda, \delta}$ and the KL inequality, the standard KL convergence theorem \cite[Theorem 2.9]{attouch2013convergence} yields the finite-length property $\sum_k \|x^{k+1}-x^k\| < +\infty$.  Thus \(\{x^k\}_k\) is Cauchy and converges to some \(x_{\lambda,\delta}^\star\). 

{\bf Step 4 - Characterization of the limit point.}
From the continuity of $\zprox_{\lambda, f}^\delta$ and from
\(x^{k+1}-x^k\to 0\) we obtain
$\zprox_{\lambda, f}^\delta(x_{\lambda,\delta}^\star)=x_{\lambda,\delta}^\star$ and equivalently $
\nabla f^{\lambda,\delta}(x_{\lambda,\delta}^\star)=0$.

\subsection{Discussion on the parameter $\lambda$ and convexification effect}

The bounds in Theorem~\ref{thm:rates} highlight the role of the parameter $\lambda$. Small values of $\lambda$ yield sharper guarantees on $\|\nabla f(x^k)\|$, as the approximation error vanishes with $\lambda$. We show here that, conversely, larger values of $\lambda$ mitigate nonconvexity and simplifies the geometry of the objective.

\begin{theorem}[Convexification for large $\lambda$]\label{thm:convexification}
Let $f$ satisfy Assumption~\ref{ass:on_f}. The following holds.
\begin{enumerate}
    \item[(i)] If $f\in C^1$ satisfies the dissipativity condition $\langle \nabla f(y), y \rangle \geq m\|y\|^2 - b$ for $m > 0$ and $b \geq 0$, then, for any radius $R > 0$, $f^{\lambda,\delta}$ is convex on $B(0, R)$ provided
    \begin{equation}
        \lambda \geq \frac{\delta d + b + \sqrt{(\delta d + b)^2 + 2\delta m R^2}}{2\delta m}.
    \end{equation}
        \item[(ii)] If $f(y) = \frac{\kappa}{2}\|y\|^2 + V(y)$ for some $\kappa > 0$, where $V$ is bounded with global oscillation $\operatorname{osc}(V) := \sup V - \inf V < \infty$, then, $f^{\lambda,\delta}$ is globally convex provided
\begin{equation}
        \lambda \geq \frac{e^{\osc(V)/\delta}-1}{\kappa}.
    \end{equation}
\end{enumerate}
\end{theorem}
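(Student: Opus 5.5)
The plan is to compute the Hessian of the soft Moreau envelope through the Gibbs posterior. For fixed $x$, introduce the probability measure
\[
\mu_x(dy)\propto \exp\!\Big(-\tfrac{f(y)}{\delta}-\tfrac{\|y-x\|^2}{2\lambda\delta}\Big)\,dy .
\]
Differentiating $f^{\lambda,\delta}(x)=-\delta\log\int e^{-f(y)/\delta}e^{-\|y-x\|^2/(2\lambda\delta)}dy+\text{const}$ twice under the integral sign gives
\[
\nabla f^{\lambda,\delta}(x)=\tfrac{1}{\lambda}\big(x-\E_{\mu_x}[y]\big),\qquad
\nabla^2 f^{\lambda,\delta}(x)=\tfrac{1}{\lambda}I-\tfrac{1}{\lambda^2\delta}\operatorname{Cov}_{\mu_x}(y).
\]
Dominated convergence justifies the differentiation, using Assumption~\ref{ass:on_f} and the Gaussian factor. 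Convexity at $x$ is therefore equivalent to
\[
\operatorname{Cov}_{\mu_x}(y)\preceq \lambda\delta I ,
\]
and it suffices to bound $\E_{\mu_x}\|y-\E_{\mu_x}y\|^2$ or, more sharply, the variance in each direction. Both items reduce to posterior covariance estimates.

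\textbf{Item (ii).} Write $\mu_x\propto e^{-V/\delta}\,\nu_x$, where $\nu_x$ is the Gaussian with density proportional to $\exp(-\kappa\|y\|^2/(2\delta)-\|y-x\|^2/(2\lambda\delta))$. The Gaussian $\nu_x$ has covariance $\sigma^2 I$ with $\sigma^2=\lambda\delta/(1+\kappa\lambda)$. The density ratio $d\mu_x/d\nu_x$ lies between $c\,e^{-\sup V/\delta}$ and $c\,e^{-\inf V/\delta}$. The usual perturbation bound for the variance then gives, for every unit vector $u$,
\[
\operatorname{Var}_{\mu_x}(\langle u,y\rangle)\le \E_{\mu_x}\langle u,y-m_\nu\rangle^2\le e^{\osc(V)/\delta}\sigma^2 ,
\]
where $m_\nu$ is the mean of $\nu_x$. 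The variance is at most the second moment about any point, and the second moment is transferred from $\nu_x$ to $\mu_x$ by the density ratio bound. The required inequality $e^{\osc(V)/\delta}\lambda\delta/(1+\kappa\lambda)\le\lambda\delta$ is exactly $\lambda\ge (e^{\osc(V)/\delta}-1)/\kappa$. This holds uniformly in $x$, which gives global convexity.

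\textbf{Item (i).} Bound $\operatorname{Cov}_{\mu_x}\preceq \E_{\mu_x}\|y\|^2\, I$, since the variance is at most the second moment about the origin. Then estimate $\E_{\mu_x}\|y\|^2$ by integration by parts. For the potential $U(y)=f(y)/\delta+\|y-x\|^2/(2\lambda\delta)$, the identity
\[
\E_{\mu_x}\langle\nabla U(y),y\rangle=d
\]
holds, with boundary terms vanishing thanks to the Gaussian factor. Dissipativity gives
\[
\langle\nabla U,y\rangle\ge \frac{m\|y\|^2-b}{\delta}+\frac{\|y\|^2-\langle x,y\rangle}{\lambda\delta}.
\]
With $M=\E\|y\|^2$ and $\E\langle x,y\rangle\le R\sqrt{M}$ for $\|x\|\le R$, this yields a quadratic inequality in $s=\sqrt M$:
\[
\Big(m+\tfrac1\lambda\Big)s^2-\tfrac{R}{\lambda}s\le \delta d+b .
\]
Solving it bounds $M$. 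Requiring $M\le\lambda\delta$ then becomes a polynomial condition on $\lambda$. The aim is to show it is implied by $\delta m\lambda^2-(\delta d+b)\lambda-R^2/2\ge0$, whose positive root is exactly the stated threshold. Because the stated formula has this specific shape, the intended route probably bounds $\langle x,y\rangle\le \tfrac12\|x\|^2+\tfrac12\|y\|^2$ instead of using Cauchy--Schwarz, possibly combined with dropping the $1/\lambda$ terms appropriately.

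\textbf{Main obstacle.} The main obstacle is the algebra in (i): choosing the Young-inequality split so that the condition collapses to the stated quadratic in $\lambda$. I would first try $\langle x,y\rangle\le \tfrac12\|x\|^2+\tfrac12\|y\|^2$. This gives $(m+\tfrac{1}{2\lambda})M\le \delta d+b+R^2/(2\lambda)$. Then impose $M\le\lambda\delta$ and check whether it reduces to the stated quadratic. If it does not, I would adjust the weights in the split.
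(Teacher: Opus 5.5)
Your proposal is correct. For (i) it is essentially the paper's argument: the Hessian identity $\nabla^2 f^{\lambda,\delta}(x)=\frac{1}{\lambda}I-\frac{1}{\lambda^2\delta}\Sigma_{\lambda,\delta}(x)$, the covariance bounded by the uncentred second moment, the integration-by-parts identity $\E_{\mu_x}\langle\nabla\varphi_x(y),y\rangle=\delta d$, and Young's inequality on $\langle x,y\rangle$.

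The algebra you flagged as the main obstacle already closes with your first split. Your inequality $(m+\frac{1}{2\lambda})M\le\delta d+b+\frac{R^2}{2\lambda}$, together with the requirement $M\le\lambda\delta$, asks for $(\delta d+b)\lambda+\frac{R^2}{2}\le\delta m\lambda^2+\frac{\delta\lambda}{2}$. The stated threshold implies this with slack $\delta\lambda/2$. The paper discards the nonnegative term $\frac{1}{2\lambda}\|y\|^2$ to land exactly on $\delta m\lambda^2-(\delta d+b)\lambda-\frac{R^2}{2}\ge 0$. No reweighting is needed.

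For (ii) your route genuinely differs. The paper works through functional inequalities:
\begin{itemize}
\item Bakry--\'Emery gives the Gaussian base measure a Poincar\'e constant $\lambda\delta/(1+\kappa\lambda)$.
\item The Holley--Stroock perturbation theorem transfers it to $\mu_x$ with a factor $e^{\operatorname{osc}(V)/\delta}$.
\item A lemma converts this into $\Sigma_{\lambda,\delta}(x)\preceq C_P I$ by testing on linear functions.
\item A general proposition concludes: any uniform Poincar\'e constant $C_P\le\lambda\delta$ forces convexity.
\end{itemize}
You instead compare $\E\langle u,y-m_\nu\rangle^2$ under $\mu_x$ and $\nu_x$ directly via the density-ratio bounds, using the explicit Gaussian covariance $\sigma^2 I$. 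This is exactly the Holley--Stroock mechanism restricted to linear test functions. It gives the same constant without any Poincar\'e or Bakry--\'Emery machinery.

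Since the paper only ever applies Poincar\'e inequalities to linear functions, your bound would also serve in Theorem~\ref{thm:rates}(iii). What the paper's packaging buys is modularity. The same constant feeds both the stationarity bounds and the abstract convexification criterion. Other Poincar\'e estimates can also be plugged in, such as the Lipschitz-perturbation case of Theorem~\ref{thm:rates}(ii). There the density ratio $e^{-f/\delta}$ is not bounded above and below relative to the Gaussian, so your direct comparison would not apply.
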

    The proof is given in Appendix \ref{app:proof_of_convexification}.
As we can see from the statement, the smaller $\delta$, the bigger should be $\lambda$ to ensure convexity of $f^{\lambda,\delta}$.
\begin{remark} 
    The dissipativity condition in Theorem~\ref{thm:convexification}(i) is a standard assumption in the literature about non-convex optimization and sampling, particularly within the analysis of Langevin dynamics, see e.g., \cite{Raginsky2017, XCZG2018}. While the resulting convexification is restricted to a ball of radius $R$, this is sufficient for our purposes, as the sequence of iterates $(x^k)_{k\in\mathbb{N}}$ generated by Algorithm \ref{alg:zoppa} is guaranteed to be bounded.
\end{remark}

The role of $\lambda$ as progressively removing negative curvature and simplifying the landscape of $f^{\lambda,\delta}$, is further shown in Figure~\ref{fig:convexification_app} of Appendix \ref{app:proof_of_convexification} via some numerical illustrations.

\subsection{The convex case}

If $f$ is convex we can directly prove convergence of the method and give much stronger rates. Actually, the following theorem works under the weaker assumption of $f^{\lambda,\delta}$ being convex. As mentioned previously, $f^{\lambda,\delta}$ is coercive and smooth, therefore $\argmin f^{\lambda,\delta} \neq \emptyset$. 

\begin{theorem}\label{thm:zoppa_cvx}
Let $f:\mathbb{R}^d\to\mathbb{R}$ satisfying Assumption~\ref{ass:on_f} and such that $f^{\lambda,\delta}$ is convex and denote $X_{\lambda,\delta}^{\star}:=\argmin f^{\lambda,\delta}$. Then there exists a convex function $H_{\lambda,\delta}:\mathbb{R}^d\to\mathbb{R}$ such that
\[
\zprox_{\lambda, f}^{\delta}=\prox_{\lambda\delta H_{\lambda,\delta}}.
\]
In particular, $\zprox_{\lambda, f}^{\delta}$ is firmly nonexpansive \cite{combettes2005signal}. Moreover, the sequence $(x^k)_{k\in\mathbb{N}}$ converges to some $x^\star_{\lambda,\delta}\in X_{\lambda,\delta}^{\star}$, and for every $k\ge1$,
\begin{equation}\label{eq:GDrates}
    f^{\lambda,\delta}(x^k)-\min f^{\lambda,\delta}
\le
\frac{\dist(x^0,X_{\lambda,\delta}^{\star})^2}{2\lambda k}, \quad \|\nabla f^{\lambda,\delta}(x^k)\|
\le
\frac{2\,\dist(x^0,X_{\lambda,\delta}^{\star})}{\lambda(k+1)}.
\end{equation}
\end{theorem}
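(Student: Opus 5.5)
The argument has two parts. First we show that ZOPPA is gradient descent on $f^{\lambda,\delta}$ with step $\lambda$, and that $f^{\lambda,\delta}$ is $\tfrac1\lambda$-smooth. Second, we combine this with convexity to get firm nonexpansiveness, convergence, and the rates.

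\textbf{Smoothness.} We use $\zprox^\delta_{\lambda,f}(x)=x-\lambda\nabla f^{\lambda,\delta}(x)$ together with the MMSE interpretation. Write $p_x$ for the tilted density proportional to $e^{-f(y)/\delta}e^{-\|y-x\|^2/(2\lambda\delta)}$. Differentiating the formula for $\zprox$ under the integral gives
\[
\nabla \zprox^\delta_{\lambda,f}(x)=\frac{1}{\lambda\delta}\operatorname{Cov}_{p_x}(y),
\]
which is symmetric positive semidefinite. Hence
\[
\nabla^2 f^{\lambda,\delta}(x)=\frac1\lambda\Big(I-\frac{1}{\lambda\delta}\operatorname{Cov}_{p_x}(y)\Big)\preceq \frac1\lambda I .
\]
If $f^{\lambda,\delta}$ is convex, then also $\nabla^2 f^{\lambda,\delta}\succeq0$, so $0\preceq \nabla\zprox^\delta_{\lambda,f}\preceq I$.

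\textbf{Convexity of $H_{\lambda,\delta}$ and firm nonexpansiveness.} $\zprox^\delta_{\lambda,f}$ is the gradient of the potential $\psi(x)=\tfrac12\|x\|^2-\lambda f^{\lambda,\delta}(x)$. Its Jacobian is symmetric with eigenvalues in $[0,1]$, so $\psi$ is convex and $\tfrac12\|\cdot\|^2-\psi=\lambda f^{\lambda,\delta}$ is convex. By Moreau's characterization (Gribonval's result for MMSE denoisers), a map that is the gradient of a convex $\psi$ with $\tfrac12\|\cdot\|^2-\psi$ convex is the proximity operator of a convex function. Concretely, one can take $\lambda\delta H_{\lambda,\delta}=\psi^*-\tfrac12\|\cdot\|^2$. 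This is consistent with \eqref{eq:H_def}, since by Theorem~\ref{thm:zoppa_is_prox} the function $H_{\lambda,\delta}$ is already determined up to a constant. Firm nonexpansiveness of $\prox_{\lambda\delta H_{\lambda,\delta}}$ is then standard.

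\textbf{Convergence and rates.} $x^{k+1}=x^k-\lambda\nabla f^{\lambda,\delta}(x^k)$ is gradient descent on a convex $\tfrac1\lambda$-smooth function with step exactly $1/L$. The classical analysis applies as follows.

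The descent lemma gives
\[
f^{\lambda,\delta}(x^{k+1})\le f^{\lambda,\delta}(x^k)-\tfrac\lambda2\|\nabla f^{\lambda,\delta}(x^k)\|^2 .
\]
Combining this with convexity, for any $x^\star\in X^\star_{\lambda,\delta}$,
\[
f^{\lambda,\delta}(x^{k+1})-\min f^{\lambda,\delta}\le \tfrac{1}{2\lambda}\big(\|x^k-x^\star\|^2-\|x^{k+1}-x^\star\|^2\big).
\]
Telescoping and using monotonicity of $f^{\lambda,\delta}(x^k)$ gives the first rate with $\dist(x^0,X^\star_{\lambda,\delta})^2$. The set $X^\star_{\lambda,\delta}$ is nonempty by the coercivity noted before the theorem.

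Fejér monotonicity with respect to $X^\star_{\lambda,\delta}$, together with $\nabla f^{\lambda,\delta}(x^k)\to0$, gives convergence of the iterates to a minimizer by Opial's lemma. Alternatively, the Krasnosel'skii–Mann theorem applies to the firmly nonexpansive map, whose fixed points are exactly $X^\star_{\lambda,\delta}$.

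For the gradient rate, we use the known $O(1/k)$ last-iterate bound on the fixed-point residual of a firmly nonexpansive iteration, $\|x^k-Tx^k\|\le 2\dist(x^0,\Fix T)/(k+1)$. Dividing by $\lambda$ gives the second bound. Alternatively, cocoercivity plus monotonicity of $\|\nabla f^{\lambda,\delta}(x^k)\|$ yields a bound of the same form.

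\textbf{Main obstacle.} The delicate step is the second one: identifying $H_{\lambda,\delta}$ as convex. This requires checking that the prox-of-convex characterization applies globally on $\R^d$, using that $\zprox^\delta_{\lambda,f}$ is a smooth bijection by Theorem~\ref{thm:zoppa_is_prox}. It also requires checking that the constructed function coincides with \eqref{eq:H_def} up to a constant. The rest is standard smooth convex optimization.
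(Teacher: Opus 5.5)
Most of your argument follows the paper's own alternative route. The Hessian identity gives $0\preceq\nabla^2 f^{\lambda,\delta}\preceq\frac1\lambda I$, so $\psi=\frac12\|\cdot\|^2-\lambda f^{\lambda,\delta}$ is convex with nonexpansive gradient $\zprox^\delta_{\lambda,f}$. Moreau's characterization then yields a convex $H_{\lambda,\delta}$, and ZOPPA is gradient descent with step $1/L$, $L=1/\lambda$. Your explicit choice $\lambda\delta H_{\lambda,\delta}=\psi^*-\frac12\|\cdot\|^2$ is correct. It is real-valued because $\nabla\psi$ is onto, and evaluating $\psi^*$ at $y=\zprox^\delta_{\lambda,f}(x)$ shows it coincides exactly with \eqref{eq:H_def}, not just up to a constant. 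The value rate and the convergence argument are fine.

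The step that fails is the gradient-norm rate. The bound $\|x^k-Tx^k\|\le 2\dist(x^0,\Fix T)/(k+1)$ does not hold for Picard iterates of a general firmly nonexpansive $T$; it is the rate of the anchored (Halpern-type) iteration. For $x^{k+1}=Tx^k$ the worst case is of order $1/\sqrt{k}$. For instance, in $\R^2$ let $J$ be the rotation by $\pi/2$ and $T=(I+aJ)^{-1}$, the resolvent of the monotone skew map $aJ$, so $\Fix T=\{0\}$. Since $\|(I+aJ)z\|=\sqrt{1+a^2}\,\|z\|$ and $x-Tx=(I+aJ)^{-1}aJx$, one gets $\|x^k-Tx^k\|=\frac{a}{\sqrt{1+a^2}}(1+a^2)^{-k/2}\|x^0\|$. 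For $a=k^{-1/2}$ this is at least $e^{-1/2}\|x^0\|/\sqrt{k+1}$, which exceeds $2\|x^0\|/(k+1)$ once $k\ge 10$. Your fallback fails on the same example. There $I-T$ is cocoercive, and residuals are nonincreasing for every nonexpansive $T$, so these two properties alone cannot give $O(1/k)$. The missing ingredient is that $I-T=\lambda\nabla f^{\lambda,\delta}$ has a convex potential, so you must use function values.

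The repair uses only tools you already have. First, sum your inequality $f^{\lambda,\delta}(x^{j+1})-\min f^{\lambda,\delta}\le\frac1{2\lambda}\big(\|x^j-x^\star\|^2-\|x^{j+1}-x^\star\|^2\big)$ over $j=0,\dots,k-1$, with $x^\star$ the projection of $x^0$ onto $X^\star_{\lambda,\delta}$. Second, for $1\le j\le k$, combine three facts: the descent lemma, $f^{\lambda,\delta}(x^{k+1})\ge\min f^{\lambda,\delta}$, and $\|\nabla f^{\lambda,\delta}(x^i)\|\ge\|\nabla f^{\lambda,\delta}(x^k)\|$ for $i\le k$ (nonexpansiveness of $T$). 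These give $f^{\lambda,\delta}(x^j)-\min f^{\lambda,\delta}\ge\frac\lambda2(k-j+1)\|\nabla f^{\lambda,\delta}(x^k)\|^2$. Hence
\[
\frac{\dist(x^0,X^\star_{\lambda,\delta})^2}{2\lambda}\;\ge\;\sum_{j=1}^{k}\big(f^{\lambda,\delta}(x^j)-\min f^{\lambda,\delta}\big)\;\ge\;\frac{\lambda}{2}\,\|\nabla f^{\lambda,\delta}(x^k)\|^2\sum_{j=1}^{k}(k-j+1)=\frac{\lambda\,k(k+1)}{4}\,\|\nabla f^{\lambda,\delta}(x^k)\|^2 .
\]
It follows that $\|\nabla f^{\lambda,\delta}(x^k)\|\le\frac{\dist(x^0,X^\star_{\lambda,\delta})}{\lambda}\sqrt{\frac{2}{k(k+1)}}\le\frac{2\,\dist(x^0,X^\star_{\lambda,\delta})}{\lambda(k+1)}$ for every $k\ge1$. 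This is exactly the second bound in \eqref{eq:GDrates}; the paper obtains it by citing Beck's book.
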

        The proof is given in Appendix \ref{app:proof_of_zoppa_cvx}.

\begin{corollary}
The conclusion of the previous theorem still holds if the assumption that $f^{\lambda,\delta}$ is convex is replaced by the requirement that $f$ is convex or satisfies the conditions of Theorem~\ref{thm:convexification}(ii).
\end{corollary}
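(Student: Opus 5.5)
The corollary reduces to Theorem~\ref{thm:zoppa_cvx}, so the plan is to check that $f^{\lambda,\delta}$ is convex in each of the two cases and then invoke that theorem.

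\textbf{Case of Theorem~\ref{thm:convexification}(ii).} Here there is nothing to do: that result states that $f^{\lambda,\delta}$ is globally convex whenever $\lambda \ge (e^{\osc(V)/\delta}-1)/\kappa$. Under this choice of $\lambda$ the hypothesis of Theorem~\ref{thm:zoppa_cvx} holds directly, and all its conclusions follow: the representation $\zprox^\delta_{\lambda,f}=\prox_{\lambda\delta H_{\lambda,\delta}}$ with $H_{\lambda,\delta}$ convex, firm nonexpansiveness, convergence, and the rates \eqref{eq:GDrates}.

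\textbf{Case $f$ convex.} It remains to show that convexity of $f$ implies convexity of $f^{\lambda,\delta}$. Write
\[
e^{-f^{\lambda,\delta}(x)/\delta}=\E_{z\sim\mathcal N(0,\lambda\delta I)}\big[e^{-f(x+z)/\delta}\big]=(g*\varphi)(x),
\]
where $g:=e^{-f/\delta}$ and $\varphi$ is the Gaussian density. Since $f$ is convex, $g$ is log-concave, and $\varphi$ is log-concave. By the Prékopa--Leindler inequality, the convolution of two log-concave functions is log-concave. Hence $\log(g*\varphi)$ is concave, and $f^{\lambda,\delta}=-\delta\log(g*\varphi)$ is convex.

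\textbf{Alternative route avoiding Prékopa--Leindler.} For $C^2$ functions one can argue through the Hessian. Differentiating under the integral (justified by Assumption~\ref{ass:on_f}), the Hessian of $f^{\lambda,\delta}$ equals $\frac{1}{\lambda}I-\frac{1}{\lambda^2\delta}\operatorname{Cov}(\mu_x)$. Here $\mu_x$ is the tilted measure with density proportional to
\[
e^{-f(y)/\delta-\|y-x\|^2/(2\lambda\delta)}.
\]
This measure is strongly log-concave with parameter at least $1/(\lambda\delta)$. The Brascamp--Lieb inequality then gives $\operatorname{Cov}(\mu_x)\preceq \lambda\delta I$, so the Hessian is positive semidefinite. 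If $f$ is not $C^2$, approximate $f$ by smooth convex functions, or simply use the Prékopa--Leindler argument above.

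\textbf{Main obstacle.} The only nontrivial step is preservation of log-concavity under Gaussian convolution; I would cite Prékopa's theorem for it. Everything else is a direct application of Theorem~\ref{thm:zoppa_cvx}.
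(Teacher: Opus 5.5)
Your proposal is correct and follows essentially the paper's argument. Case (ii) is read off directly from Theorem~\ref{thm:convexification}(ii). For convex $f$, the paper likewise uses preservation of log-concavity under Gaussian convolution (citing Saumard--Wellner, which is Pr\'ekopa's theorem). The paper's alternative, the Hessian identity combined with the Bakry--\'Emery bound $C_P(\delta,x)\le\lambda\delta$, does exactly what your Brascamp--Lieb covariance bound does.
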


\begin{theorem}[Rates]\label{thm:zoppa_cvx_rates}
Let $f:\mathbb{R}^d\to\mathbb{R}$ be a convex function satisfying Assumption~\ref{ass:on_f} and denote $X^{\star}:=\argmin f$. Then \begin{enumerate}
    \item[(i)] \begin{equation}\label{eq:rate_nonsmooth}
        f(x^{k})-\min_y f(y)\leq \|\nabla f^{\lambda,\delta}(x^{k-1})\|\operatorname{dist}(x^{k},X^\star)+ d \delta.
    \end{equation}
    \item[(ii)] if $f$ is $L$-smooth then
    \(\|\nabla f(x^k)\| \leq (1+\lambda L) \|\nabla f^{\lambda, \delta}(x^k)\|+ L \sqrt{d \lambda \delta}\)
    and additionally to \eqref{eq:rate_nonsmooth} it also holds
    \[f(x^k) - \min_y f(y) \leq \left((1+\lambda L) \|\nabla f^{\lambda, \delta}(x^k)\|+ L \sqrt{d \lambda \delta}\right)\dist(x^k,X^\star).\]
\end{enumerate}
In all the previous equations $\|\nabla f^{\lambda,\delta}(x^k)\|$ can be bounded as in \eqref{eq:GDrates}.
\end{theorem}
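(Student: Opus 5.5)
Both parts rest on the MMSE/Gibbs structure of ZOPO. For fixed $x$, define the tilted Gibbs measure
\[
\mu_x(dy)\propto \exp\Bigl(-\tfrac{f(y)}{\delta}-\tfrac{\|y-x\|^2}{2\lambda\delta}\Bigr)dy.
\]
With this notation $\zprox^\delta_{\lambda,f}(x)=\E_{\mu_x}[y]$ and $\nabla f^{\lambda,\delta}(x)=(x-\E_{\mu_x}[y])/\lambda$. Part (i) will come from a convexity/Jensen argument on the variational (Gibbs) characterization of $f^{\lambda,\delta}$. Part (ii) will come from an averaging argument for $\nabla f$ under $\mu_x$.

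\textbf{Part (i).} Convexity of $f$ and Jensen give $f(\E_{\mu_x}[y])\le \E_{\mu_x}[f(y)]$. I will therefore bound $\E_{\mu_x}[f]$ using the Gibbs variational principle:
\[
f^{\lambda,\delta}(x)=\E_{\mu_x}\Bigl[f(y)+\tfrac{\|y-x\|^2}{2\lambda}\Bigr]-\delta\,\mathrm{Ent}(\mu_x)+\delta\log\bigl((2\pi\lambda\delta)^{d/2}\bigr).
\]
Applied at $x=x^{k-1}$ with $x^k=\E_{\mu_{x^{k-1}}}[y]$, this yields $f(x^k)\le f^{\lambda,\delta}(x^{k-1})+\text{(entropy correction)}$.

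A cleaner alternative, which I would try first, is an integration-by-parts identity for convex $f$. Stein's identity under $\mu_x$ gives
\[
\E_{\mu_x}[\nabla f(y)]=\nabla f^{\lambda,\delta}(x)=\tfrac{x-x^{+}}{\lambda},\qquad x^{+}:=\E_{\mu_x}[y].
\]
Convexity then gives, for $x^\star\in X^\star$,
\[
f(y)-f(x^\star)\le\langle\nabla f(y),y-x^\star\rangle .
\]
Taking expectations under $\mu_{x^{k-1}}$ and splitting $y-x^\star=(y-x^k)+(x^k-x^\star)$, the second piece is bounded by $\|\nabla f^{\lambda,\delta}(x^{k-1})\|\,\dist(x^k,X^\star)$. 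The first piece is a covariance term $\E_{\mu}[\langle\nabla f(y),y-\E_\mu y\rangle]$. By Gaussian-type integration by parts, $\E_\mu[\langle \delta\nabla\log\rho,\, y-\E y\rangle]=-\delta d$ with $\delta\nabla\log\rho=-\nabla f-(y-x)/\lambda$. Hence this covariance term equals $\delta d-\mathrm{Var}_\mu(y)/\lambda\le \delta d$. Finally, $f(x^k)\le\E_\mu f(y)$ by Jensen, which gives \eqref{eq:rate_nonsmooth}.

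This step needs $f\in C^1$, or a subgradient version. For merely continuous convex $f$, I would approximate by smooth convex functions (e.g.\ Moreau envelopes) and pass to the limit, or carry out the integration by parts with weak gradients, which exist a.e.\ for convex $f$. Justifying this integration by parts (vanishing boundary terms, using A2) is the main obstacle.

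\textbf{Part (ii).} For $L$-smooth $f$, write
\[
\nabla f(x)=\E_{\mu_x}[\nabla f(y)]+\E_{\mu_x}[\nabla f(x)-\nabla f(y)].
\]
The second term is bounded by $L\,\E\|y-x\|\le L(\|x-x^+\|+\sqrt{\mathrm{Var}})$. We have $\|x-x^+\|=\lambda\|\nabla f^{\lambda,\delta}\|$, which gives the $(1+\lambda L)\|\nabla f^{\lambda,\delta}(x^k)\|$ term. For the variance, $\mu_x$ is $1/(\lambda\delta)$-strongly log-concave because $f$ is convex. Brascamp–Lieb therefore gives $\mathrm{Var}_{\mu_x}(y)\le d\lambda\delta$, producing $L\sqrt{d\lambda\delta}$. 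This is the same computation as Theorem~\ref{thm:rates}(i) with $1-\lambda L$ replaced by $1$, thanks to convexity.

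The function-value bound then follows from convexity, $f(x^k)-f(x^\star)\le\langle\nabla f(x^k),x^k-x^\star\rangle$, together with Cauchy–Schwarz. The last sentence of the statement is just Theorem~\ref{thm:zoppa_cvx}, which applies since $f$ convex implies $f^{\lambda,\delta}$ convex by the corollary.
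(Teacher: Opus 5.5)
Your proposal is correct and follows essentially the same route as the paper. For (i), the paper likewise integrates the convexity inequality against $\mu_x$ and uses integration by parts to produce the $-d\delta$ term plus a nonnegative variance term, then concludes with Jensen. It phrases the outcome as $\nabla f^{\lambda,\delta}(x)\in\partial_{d\delta}f(\zprox^{\delta}_{\lambda,f}(x))$ for an arbitrary test point before specializing to $X^\star$, and it justifies the integration by parts only through the same smooth-approximation remark you make. For (ii), it uses $\nabla f^{\lambda,\delta}(x)=\E_{\mu_x}[\nabla f(y)]$, the $L$-Lipschitz bound, and a Bakry--\'Emery Poincar\'e constant $\lambda\delta$ for the $1/\lambda$-strongly convex potential; this gives the same estimate $\operatorname{tr}\Sigma_{\lambda,\delta}(x)\le d\lambda\delta$ as your Brascamp--Lieb step.
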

    The proof is given in Appendix \ref{app:proof_of_zoppa_cvx_rates}.

\subsection{Convergence of S-ZOPPA}

In practice, the exact value of \(\zprox^{\delta}_{\lambda, f}\) at any point is not available. It is then natural to approximate it by Monte Carlo sampling \cite{osher2023hamilton,tibshirani2025laplace,luo2025asymptotic,di2026operator,zhang2024inexact,morales2026convergenceratesstochasticproximal}, defining the following estimator for \(N\) samples
\begin{equation}\label{eq:S-ZOPO}
    \forall x\in\R^d,\quad\szprox^{\delta,N}_{\lambda, f}(x)=\frac{\sum_{i=1}^Ny_i\exp(-f(y_i)/\delta)}{\sum_{i=1}^N\exp(-f(y_i)/\delta)},\quad y_i\sim\mathcal{N}\left(x,\lambda\delta I\right).
\end{equation}
This estimator of \(\zprox^{\delta}_{\lambda, f}\) can be related to importance sampling and is known to be biased. We give a more detailed description of its behavior in Section \ref{sec:sampling_challenges}. Based on this estimator, we can build the stochastic analog of ZOPPA.

\begin{algorithm}[H] 
\caption{S-ZOPPA: Stochastic Zeroth-Order Proximal Point Algorithm} \label{alg:szd}
\begin{algorithmic}
  \STATE{{\bf Input:} $x^0 \in \R^d$, $\delta \in \R_{>0}$, $\lambda\in \R_{>0}$, $\left(N_k\right)_{k\in\mathbb{N}}\in \mathbb{N}^\mathbb{N}$}
     \FOR{$k = 0, 1, \cdots$}
         \STATE{$x^{k + 1} = \szprox^{\delta,N_k}_{\lambda, f}(x^k)$}
     \ENDFOR
 \end{algorithmic}    
 \end{algorithm}

The following convergence results hold for S-ZOPPA, supposing that $f^{\lambda,\delta}$ is convex (a weaker condition than $f$ convex).

\begin{theorem}\label{thm:S-ZOPPA}
    Let \(f:\mathbb{R}^d\to 
\R\) satisfy Assumption~\ref{ass:on_f} and such that $f^{\lambda,\delta}$ is convex and denote $X^{\star}_{\lambda,\delta}:=\argmin f^{\lambda,\delta}$. Let the sequence \(\left(N_k\right)_{k\in\mathbb{N}}\) be such that \(\sum_{k\in\mathbb{N}}N_k^{-1/2}<\infty\) and let  $\left(x^k\right)_{k\in\mathbb{N}}$ be generated by S-ZOPPA (Algorithm \ref{alg:szd}). Assume that  \(\left(x^k\right)_{k\in\mathbb{N}}\) is bounded, and that there exists \(\varepsilon>0\) such that for any \(k\in\mathbb{N}\), \(\frac{1}{N_k}\sum_{i=1}^{N_k}\exp(-f(y^k_i)/\delta)\ge\varepsilon\) where \(\left(y^k_i\right)_{i=1}^{N_k}\) are the i.i.d. samples from \(\mathcal{N}(x^k,\lambda\delta I)\) generating \(\szprox^{\delta,N_k}_{\lambda, f}(x^k)\).  
Then,      \(\sum_{k\in\mathbb{N}}\|\zprox^{\delta}_{\lambda, f}(x^k)-x^k\|^2<\infty\)
    with probability one. Furthermore, there exists a random variable $\bar{x}$ taking values in $X^{\star}_{\lambda,\delta}$ such that $$x^k \to \bar{x} \text{ with probability one.} $$
\end{theorem}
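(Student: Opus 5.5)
The plan is to treat S-ZOPPA as an inexact Krasnosel'skii--Mann / proximal point iteration for the firmly nonexpansive operator $T:=\zprox^{\delta}_{\lambda,f}$ given by Theorem~\ref{thm:zoppa_cvx}, and to apply a stochastic quasi-Fej\'er argument (Robbins--Siegmund type, in the form of Combettes--Pesquet's stochastic quasi-Fej\'er monotonicity). We write $x^{k+1}=T(x^k)+e^k$ with $e^k:=\szprox^{\delta,N_k}_{\lambda,f}(x^k)-T(x^k)$. The fixed points of $T$ coincide with $X^\star_{\lambda,\delta}$, because $T(x)=x-\lambda\nabla f^{\lambda,\delta}(x)$ and $f^{\lambda,\delta}$ is convex. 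The key quantitative input is a bound of the form $\E[\|e^k\|\mid\mathcal F_k]\le C N_k^{-1/2}$, with $C$ independent of $k$.

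To obtain this bound, write $w(y)=e^{-f(y)/\delta}$, and let $A_N=\frac1N\sum_i y_i w(y_i)$ and $B_N=\frac1N\sum_i w(y_i)$ be the empirical means, with expectations $A$ and $B$, so that $T(x)=A/B$. Then
\[
\frac{A_N}{B_N}-\frac{A}{B}=\frac{(A_N-A)-T(x)(B_N-B)}{B_N}.
\]
By the standing assumption in the statement, $B_N\ge\varepsilon$, so
\[
\|e^k\|\le\varepsilon^{-1}\big(\|A_N-A\|+\|T(x^k)\|\,|B_N-B|\big).
\]
Conditionally on $x^k$, the Cauchy--Schwarz inequality gives $\E\|A_N-A\|\le(\E\|y\|^2w(y)^2/N)^{1/2}$ and a similar bound for $B_N$. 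Since $f$ is bounded below, $w\le e^{-\min f/\delta}$, so these second moments are bounded by Gaussian moments that are uniform over $x$ in a bounded set. The iterates are assumed bounded, and $T$ is continuous, so $\|T(x^k)\|$ is bounded as well. This yields $\E[\|e^k\|\mid\mathcal F_k]\le C N_k^{-1/2}$ almost surely. It also gives $\|e^k\|\le D$ deterministically: both $x^{k+1}$ and $T(x^k)$ lie in a bounded set, using boundedness of the iterates for $x^{k+1}$.

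Next comes the Fej\'er step. Fix $z\in X^\star_{\lambda,\delta}$. Firm nonexpansiveness of $T$ gives
\[
\|T x^k-z\|^2\le\|x^k-z\|^2-\|Tx^k-x^k\|^2.
\]
Expanding $\|x^{k+1}-z\|^2=\|Tx^k-z+e^k\|^2$ then gives
\[
\|x^{k+1}-z\|^2\le\|x^k-z\|^2-\|Tx^k-x^k\|^2+2\|Tx^k-z\|\|e^k\|+\|e^k\|^2.
\]
The factor $\|Tx^k-z\|$ is bounded by $\|x^k-z\|$ and hence by a constant, and $\|e^k\|^2\le D\|e^k\|$. Taking conditional expectations, we get
\[
\E[\|x^{k+1}-z\|^2\mid\mathcal F_k]\le\|x^k-z\|^2-\|Tx^k-x^k\|^2+C' N_k^{-1/2},
\]
and $\sum_k N_k^{-1/2}<\infty$. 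The Robbins--Siegmund lemma then yields two conclusions almost surely: $\|x^k-z\|$ converges, and $\sum_k\|Tx^k-x^k\|^2<\infty$. The latter is the first claim of the theorem.

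Finally, we upgrade this to almost-sure convergence of the iterates. Take a countable dense subset of $X^\star_{\lambda,\delta}$. On a single full-measure event, $\|x^k-z\|$ converges for every $z$ in that subset, and by density and the triangle inequality for every $z\in X^\star_{\lambda,\delta}$. On the same event $\|Tx^k-x^k\|\to0$. Every cluster point $\bar x$ of the bounded sequence is therefore a fixed point of $T$, by continuity of $T$, and hence lies in $X^\star_{\lambda,\delta}$. Opial's argument, as in Combettes--Pesquet Prop.~2.3, then gives a unique cluster point. Measurability of the limit $\bar x$ follows because it is a pointwise limit of random variables.

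The main obstacle is the uniform conditional error bound, specifically handling the random denominator. The assumption $B_N\ge\varepsilon$ is exactly what makes this tractable. The points needing care are that the bound must use conditioning on $\mathcal F_k$, and that the constant must be uniform over the bounded region containing the iterates.
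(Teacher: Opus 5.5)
Your proof is correct. It reaches the result by a different and more self-contained route than the paper, in two places.

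\textbf{Error estimate.} The paper controls the error through Lemma~\ref{lem:boundedness}. That lemma uses a four-term expansion of $\hat{\mathbf u}/\hat v-\mathbf u/v$, fourth-moment bounds and Cauchy--Schwarz to obtain $\E[\|e_k\|^2\mid x^k]\le C_{K,\varepsilon}/N_k$. You instead isolate the random denominator exactly, writing $A_N/B_N-A/B=\big((A_N-A)-T(x)(B_N-B)\big)/B_N$. On $\{B_N\ge\varepsilon\}$ this needs only second moments. The same identity would also give the paper's second-moment bound $C/N$ directly, so the fourth-moment machinery is not actually needed once $\hat v\ge\varepsilon$ is assumed.

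\textbf{Convergence step.} The paper invokes \cite[Theorem~2.5]{combettes2015stochastic} and \cite[Theorem~4.1]{combettes2026}, whose error hypothesis is $\sum_k\sqrt{\E[\|e_k\|^2\mid x^k]}<\infty$. You re-derive the stochastic quasi-Fej\'er conclusion by hand via Robbins--Siegmund and Opial's lemma.

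\textbf{Trade-off.} Your argument leans on the boundedness of the iterates twice more: to bound the cross term $2\|Tx^k-z\|\|e^k\|$, and to linearize $\|e^k\|^2\le D\|e^k\|$. This is why a first-moment error bound suffices for you. The cited stochastic quasi-Fej\'er theorem handles these terms through the square-root summability of conditional second moments alone. In the paper, boundedness enters only through the variance lemma. Since the theorem assumes bounded iterates anyway, nothing is lost.

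Both arguments rest on the same reading of the boundedness and $\varepsilon$-normalization hypotheses: they are taken to hold along the trajectory, with a deterministic compact set. Neither route is more rigorous than the other on that point.
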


The assumptions of Theorem~\ref{thm:S-ZOPPA} on the sequence \(\left(x^k\right)_{k\in\mathbb{N}}\) are required in order to bound the variance of the estimator \(\szprox^{\delta,N_k}_{\lambda, f}\) (see Lemma~\ref{lem:boundedness} in Appendix  \ref{app:proof_S-ZOPPA}). We also provide in Proposition~\ref{prop:stability} of Appendix  \ref{app:proof_S-ZOPPA} an estimate in high probability for this event. The proof then follows from the application of \cite[Theorem~2.5]{combettes2015stochastic} and \cite[Theorem~4.1]{combettes2026}, see Appendix \ref{app:proof_S-ZOPPA}.
 
\section{Numerical experiments}\label{sec:experiments}

\subsection{Role of the stepsize $\lambda$}\label{sec:exp_lambdarole}
 
In Section \ref{sec:main_results} we discussed the advantages of both $\lambda$ small and $\lambda$ large. So, how to choose? We recommend a continuation strategy in which $\lambda$ is initialized at a relatively large value and then decreased every $\bar{k}$ iterations or when the iterates are stationary enough. In this way the algorithm first explores a smoother and more convex (see Theorem~\ref{thm:convexification}) landscape and progressively approaches the geometry of the original objective $f$. In light of the stationarity results in Theorem~\ref{thm:rates}, this suggests that the iterates move from stationary points of $f^{\lambda,\delta}$ toward stationary points of $f$. We do not analyze the convergence of the method with a varying sequence $(\lambda_k)_k$ here, as this falls outside the scope of the present paper and is left for future work. Nevertheless, we provide numerical evidence showing that decreasing $\lambda$ eventually leads the algorithm to stationary points of $f$ and helps escaping from poor stationary points. We show this in Figure~\ref{fig:varying_lambda} and report the experiment setting in Appendix \ref{app:exp_setting_lambda}.

\begin{figure}[htbp]
    \centering
\includegraphics[width=1\linewidth]{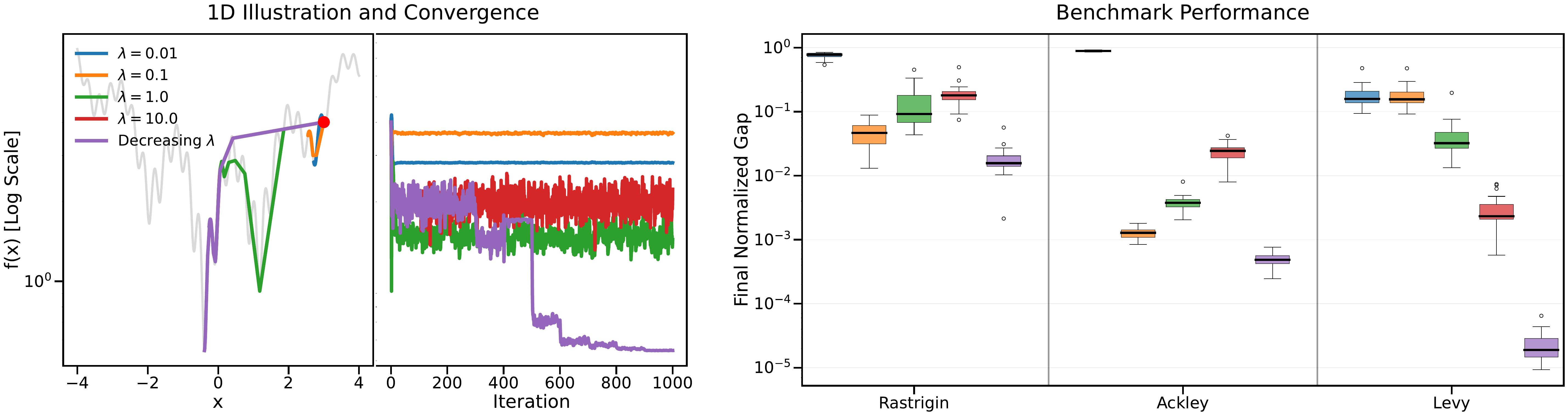}
    \caption{Comparison between fixed $\lambda$ and decreasing $\lambda$ with $\delta =1 >0 $ fixed.}
    \label{fig:varying_lambda}
\end{figure}

\subsection{Role of the temperature parameter $\delta$ and sampling considerations}\label{sec:sampling_challenges}

Estimating \(\zprox^{\delta}_{\lambda, f}\) accurately is crucial to keep the properties demonstrated on ZOPPA in the previous sections. We discuss below the properties of \(\szprox^{\delta,N}_{\lambda, f}\).

\textbf{S-ZOPO as an importance sampling estimator.} Interestingly, S-ZOPO can be seen as a Self Normalized Importance Sampling (SNIS) estimator \cite{deligiannidis2024importance} of \[\mathbb{E}_{y\sim\mu_x}[y],\quad \mu_x(y)\propto\exp(-f(y)/\delta)\exp(-\frac{1}{2\lambda\delta}\|x-y\|^2),\]
with the proposal density \(\mathcal{N}(x,\lambda\delta I)\). This class of estimators is well known in the literature and was previously referred to as ratio estimate \cite{hesterberg1988advances}, weighted importance sampling estimator \cite{lemieux2009monte, rubinstein2016simulation} or autonormalized importance sampling estimator \cite{agapiou2017importance}. In particular, as shown in \cite[Section~2.5.2]{hesterberg1988advances}, \(\szprox^{\delta,N}_{\lambda, f}\) is a biased, but consistent, estimate of \(\zprox^{\delta}_{\lambda, f}\) and, as long as \(f\) is lower bounded it has the following asymptotic behavior
\begin{equation}\begin{aligned}
\mathbb{E}&\left[\szprox^{\delta,N}_{\lambda, f}(x)\right]=\zprox^{\delta}_{\lambda, f}(x)+\frac{C_1}{N}+o(N^{-1}),\\
\mathbb{E}&\left\|\szprox^{\delta,N}_{\lambda, f}(x)-\zprox^{\delta}_{\lambda, f}(x)\right\|^2=\frac{C_2}{N}+o(N^{-1}),
\end{aligned}
\end{equation}
where \(C_1\) and \(C_2\) depend on \(x\), \(\lambda\) and \(\delta\).

However, these asymptotic rates on the bias and the variance of \(\szprox^{\delta,N}_{\lambda, f}\) are not informative of the practical behavior of the estimator in certain regimes, especially when \(\delta\) tends to \(0\) (a simple example is given in Appendix \ref{app:asymptotic_behavior_x2}).

\textbf{The role of the Effective Sample Size.} This phenomenon is connected to the notion of \emph{Effective Sample Size} (ESS), which is a common metric in importance sampling. Introduced in \cite{kish1965survey}, the ESS serves as a diagnostic tool to estimate the number of independent samples from the target distribution that would be required to achieve a variance equivalent to that of the \(N\) weighted samples from the proposal distribution \cite{kong1992note,liu1996metropolized}. It takes value from \(1\) (when one sample takes all the weight) to \(N\) (when all the samples have equal weight), and can be formulated as
\begin{equation}
    \widehat{ESS} = \frac{\left(\sum_{i=1}^N w_i \right)^2}{\sum_{i=1}^N w_i^2},
\end{equation}
where in the case of S-ZOPO, \(w_i=\exp\left(-\frac{f(y_i)}{\delta}\right)\).

The ESS is a relevant metric in the context of S-ZOPO, as it allows detecting when few samples determine the final value of the estimator, ultimately altering the behavior of the estimator. Suppose that the ESS is close to \(1\). Then, denoting the samples \(y_i=x+\sqrt{\lambda\delta}\varepsilon_i\), where \( \varepsilon_i\sim\mathcal{N}\left(0,I\right)\)
and supposing that the dominant weight is the first one, we get that
\[\szprox^{\delta,N}_{\lambda, f}(x)=\frac{\sum_{i=1}^N y_i\,
\exp\!\left(-\frac{f(y_i)}{\delta}\right)\,
}
{\sum_{i=1}^N
\exp\!\left(-\frac{f(y_i)}{\delta}\right)\,}\approx \arg\min_{\left(y_i\right)_{i=1}^N}f(y_i)=x+\sqrt{\lambda\delta}\varepsilon_1.\]
In other words, the weight collapse relative to the ESS tending to \(1\) implies that \(\szprox^{\delta,N}_{\lambda, f}(x)\) rather behaves as a \emph{Random search iteration} than as an estimator of \(\zprox^{\delta}_{\lambda, f}(x)\).

\textbf{The problem with the Random search regime.} While for a well suited Gaussian distribution and a good initialization point, Random search algorithm might be useful, reaching the Random search regime generates several issues. First, by entering this regime, one \textbf{loses any interpretation of \(\szprox^{\delta,N}_{\lambda, f}(x)\) as an approximation of the zeroth order proximal operator \(\zprox^{\delta}_{\lambda, f}(x)\)}. As a consequence, the results stated in the previous sections, such as the interpretation of ZOPO as the proximal operator of another potential, cannot be rigorously translated here. Second, if the scaling of the Gaussian distribution \(\sqrt{\lambda\delta}\) is too small with respect to the distance of \(x\) to a critical point \(\bar x\) of \(f\), the point \(\szprox^{\delta,N}_{\lambda, f}(x)\) will almost not move from \(x\) and the cost in terms of number of samples in order to make a \textit{significant step} scales exponentially. Indeed, Chernoff inequality allows us to write that for samples \(\left(y_i\right)_{i=1}^N\) from \(\mathcal{N}(x,\lambda\delta I)\), the probability to escape the ball of radius \(\sqrt{R}\) around \(x\) where \(R>d\lambda\delta\), satisfies
\begin{equation}\label{eq:escaping_probability}\mathbb{P}\left(\max_{i\in[N]}\|y_i-x\|^2\ge R\right)\le N\exp\left(-\frac{(R-d\lambda\delta)^2}{4R\lambda\delta}\right)=N\exp\left(-\frac{(\alpha-1)^2d}{4\alpha}\right),\end{equation}
where \(\alpha=\frac{R}{\lambda\delta d}\). Hence, for large \(\alpha\), the amount of samples \(N\) required to escape the ball scales with \(\exp(\alpha d)\).

\textbf{When does the ESS collapse?} The weight collapse for SNIS estimators is a known phenomenon and can be related to the curse of dimensionality \cite{bengtsson2008curse}. In our context, an important property is that the ESS goes to \(1\) when \(\delta\) tends to \(0\), and, for a fixed set of samples \(\left(y_i\right)_{i=1}^N\), the ESS is increasing in \(\delta\). This phenomenon is illustrated in the left graph of Figure~\ref{fig:bias+var_vs_N} (the experiment setting is in Appendix \ref{app:exp_setting_delta}).

\begin{figure}[htbp]
    \centering
    \includegraphics[width=0.32\linewidth]{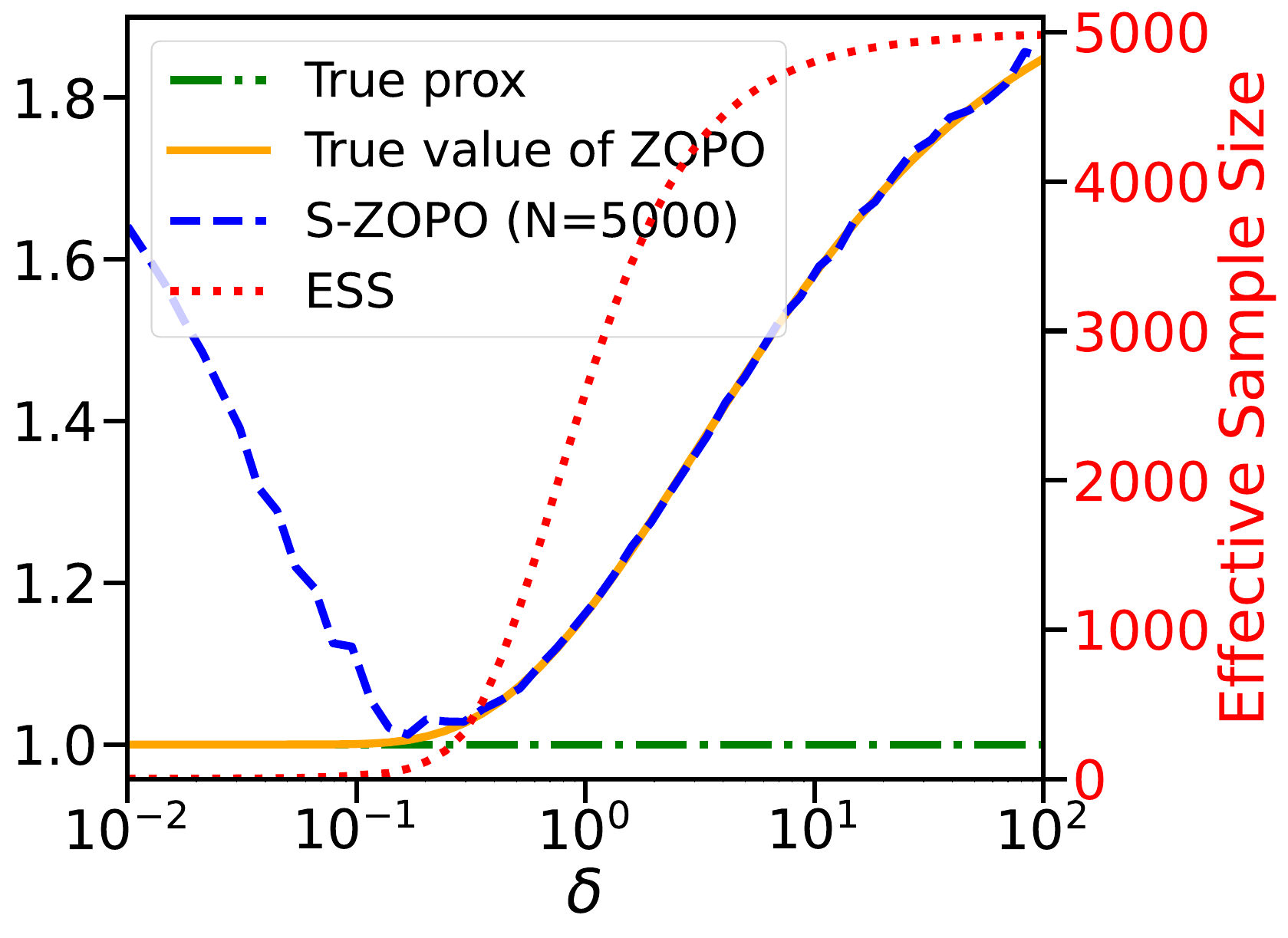}
    \hfill
    \includegraphics[width=0.32\linewidth]{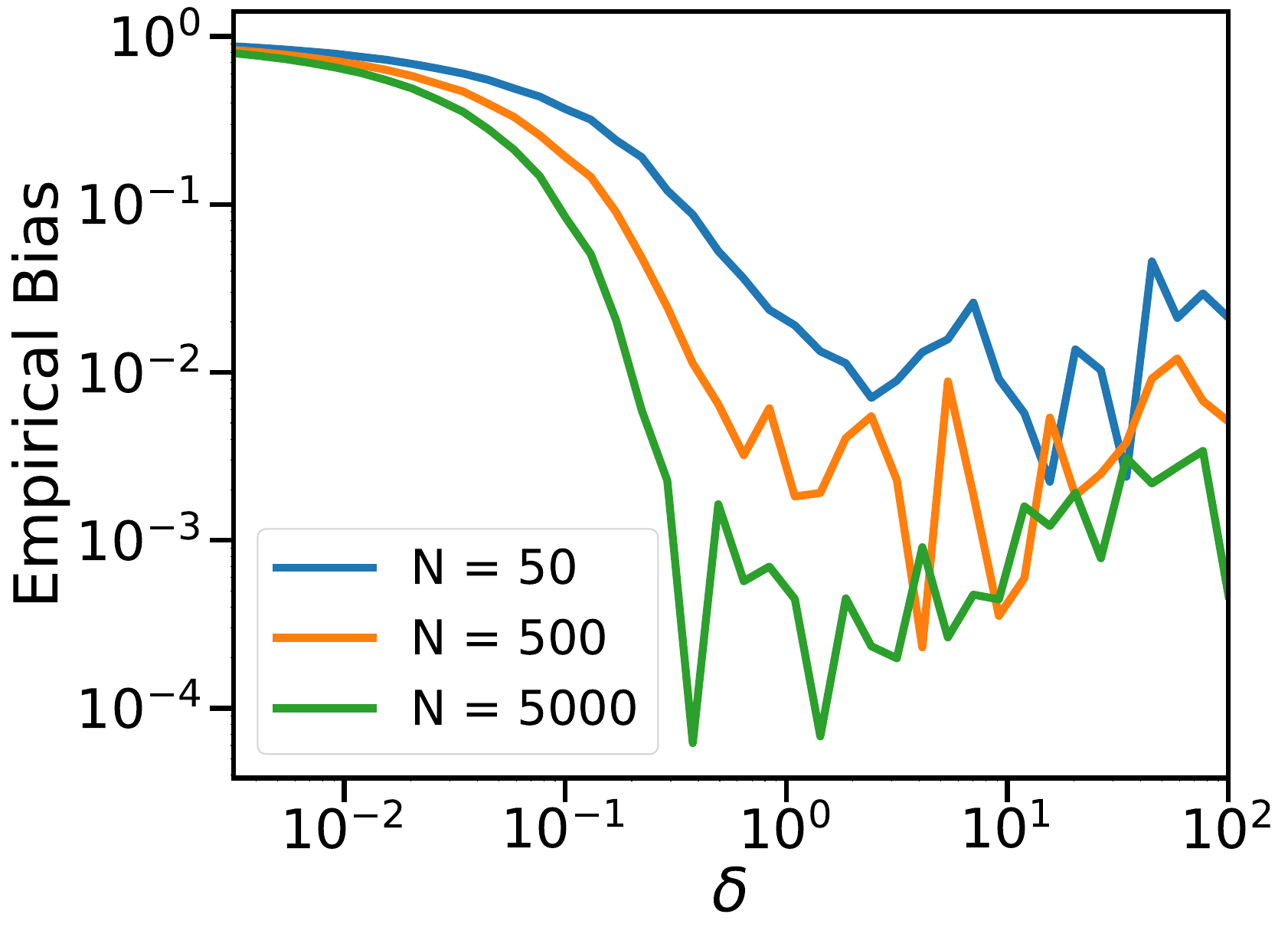}\hfill
    \includegraphics[width=0.32\linewidth]{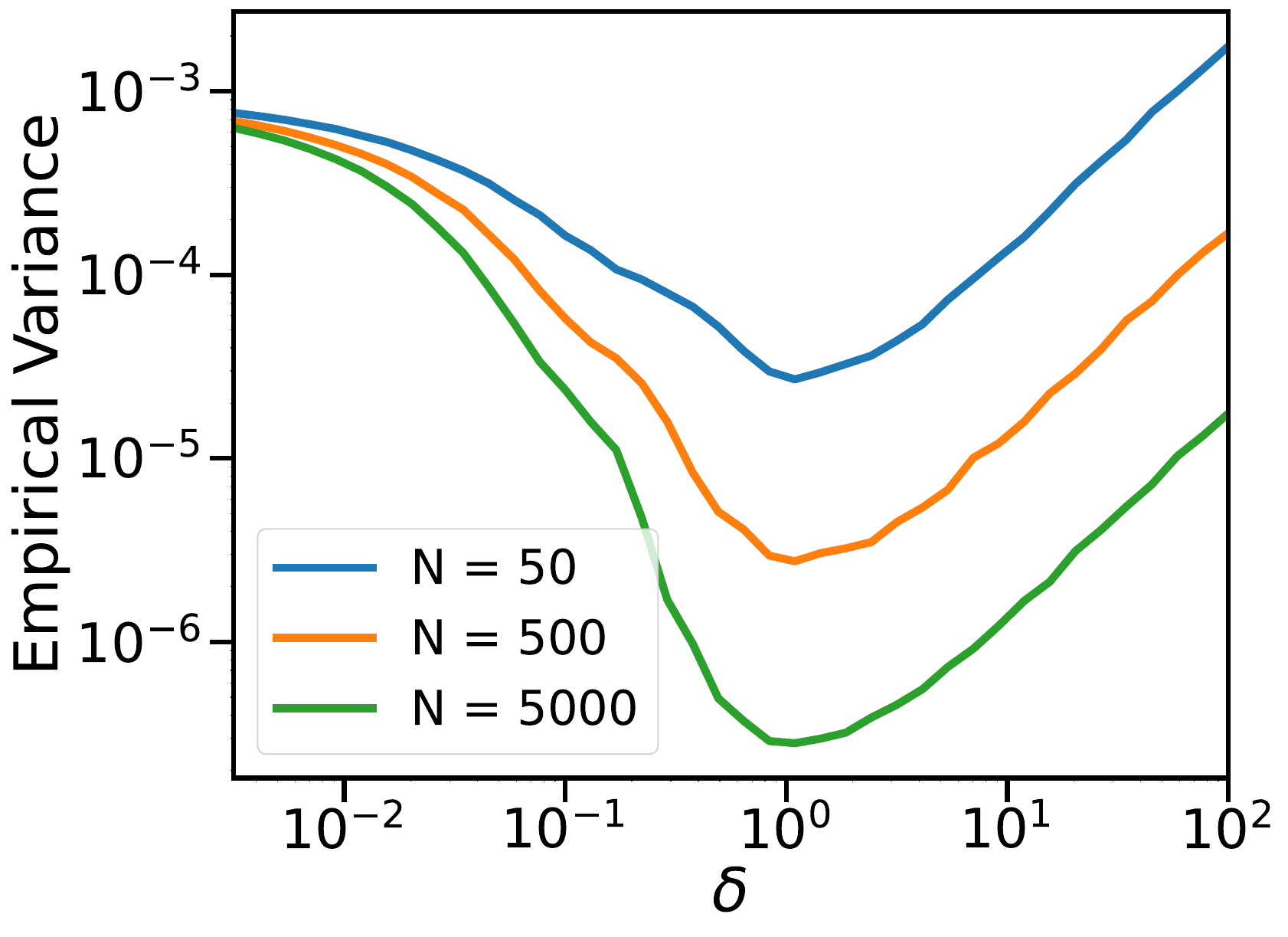}
    \caption{Effect of the temperature \(\delta\) on sampling for \(f:x\mapsto |x|\), with \(x=2\) and \(\lambda=1\). Left: ZOPO, S-ZOPO, classical prox, and mean ESS. Center and right: empirical bias and variance of S-ZOPO.}
    \label{fig:bias+var_vs_N}
\end{figure}

Such a behavior suggests that imposing \(\delta\) to go to \(0\) automatically brings S-ZOPO to the Random search regime. This proves to be critical, since a small $\delta$ causes the escaping probability \eqref{eq:escaping_probability} to vanish.
It is worth stressing that the bias and the variance of the estimator in the Random Search regime cannot be easily reduced by increasing the number of samples in Figure~\ref{fig:bias+var_vs_N}. It is indeed clear that when \(\delta\) is small, increasing the sample size is significantly less efficient to reduce both bias and variance compared to the high temperature regime (\(\delta\) large).

\section{Conclusion}\label{sec:conclusion}

We investigated zeroth-order proximal algorithms in the high-temperature regime, a setting that deviates from the traditional vanishing-temperature limit and necessitated an entirely new analytical framework. We showed that by keeping the temperature bounded away from zero, ZOPPA can be interpreted as an exact proximal-point method applied to an auxiliary objective, yielding convergence guarantees under extremely mild assumptions without requiring a cooling schedule. Our analysis of the sampled method S-ZOPPA further shows that this high-temperature regime is the natural setting for reliable, sample-based estimation.

Several directions remain open. A limitation of our work is that the convergence analysis does not yet extend straightforwardly to varying sequences \((\lambda_k)_k\) or
\((\delta_k)_k\); so this is left for future work. We would also like to remove the boundedness assumption in the convergence analysis of S-ZOPPA. On the other hand, this work paves the way for several new directions. A first direction is the development of zeroth-order splitting algorithms, where ZOPO could play the role of classical proximal operators in black-box settings. This would allow one to study zeroth-order analogues of several classical algorithms \cite{Rockafellar_FB,LionsMercier1979,Chambolle2011}, together with their acceleration counterparts \cite{FISTA, ChambolleDossal} and their generalizations \cite{BrediesChencheneNaldi2024,AkermanChencheneGiselssonNaldi2025}. Another direction is the extension to extended-valued functions \(f:\mathbb{R}^d\to\mathbb{R}\cup\{+\infty\}\) providing a natural way to handle constrained black-box optimization. Finally, our results motivate the use of alternative sampling strategies for estimating ZOPO. In the high-temperature regime, the relevant distributions are smoother and less concentrated, making techniques such as importance sampling, Langevin-type methods or sequential Monte Carlo more amenable to implementation and analysis.

\section*{Acknowledgments}
E. N., C. M. and S. V. acknowledge the support of the US Air Force Office of Scientific Research (FA8655-22-1-7034). H. L. acknowledge the financial support of the Ministry of Education, University and Research (FARE grant ML4IP R205T7J2KP). The research by E. N., C. M. and S. V. has been supported by the MUR Excellence Department Project awarded to Dipartimento di Matematica, Universita di Genova, CUP D33C23001110001. E.N. and S. V. are members of the Gruppo Nazionale per l’Analisi Matematica, la Probabilità e le loro Applicazioni (GNAMPA) of the Istituto Nazionale di Alta Matematica (INdAM). This work represents only the view of the authors. The European Commission and the other organizations are not responsible for any use that may be made of the information it contains.

\bibliographystyle{abbrv}
\bibliography{NEURIPS_version/bibliography}


\appendix

\section{Preliminaries}

\paragraph{Notation.}

To help the reader we summarize in Table \ref{tab:notations} the notations used along the paper and in the appendix.

\begin{table}[ht]
\caption{Summary of Notations and Parameters}
\label{tab:notations}
\centering
\begin{tabular}{@{}ll@{}}
\toprule
\textbf{Symbol} & \textbf{Definition} \\ \midrule
$d$ & Dimension of the ambient space \(\mathbb{R}^d\) \\
$I$ & The identity matrix in \(\mathbb{R}^d\) \\
\(\mathcal N(m,\Sigma)\) & Gaussian distribution with mean \(m\) and covariance matrix \(\Sigma\) \\
\(\lambda\) & Proximal parameter or stepsize \\
\(\delta\) & Temperature/viscosity parameter \\
\(\varphi_x(y)\) & \(f(y) + \frac{\|x-y\|^2}{2\lambda}\) \\
\(Z_{\lambda,\delta}(x)\) & Normalization constant: \(\int_{\mathbb{R}^d} \exp(-\varphi_x(y)/\delta)\,dy\) \\
\(q_x\) & Gibbs density of \(-\varphi_x/\delta\): \(\exp(-\varphi_x(y)/\delta)/Z_{\lambda,\delta}(x)\) \\
\(\mu_x\) & Probability measure with density \(q_x\) \\
\(T_{\lambda,\delta}\) & Deterministic ZOPO map: \(x\mapsto \zprox^\delta_{\lambda,f}(x)\) \\
\(f^\lambda\) & Moreau envelope: \(f^\lambda(x)=\inf_y\left\{f(y)+\frac{1}{2\lambda}\|y-x\|^2\right\}\) \\
\(f^{\lambda,\delta}\) & Soft Moreau envelope: \(f^{\lambda,\delta}(x)=-\delta\log\left((2\pi\lambda\delta)^{-d/2}Z_{\lambda,\delta}(x)\right)\) \\
\(\prox_{\lambda f}\) & Proximity operator: \(\prox_{\lambda f}(x)=\argmin_y\left\{f(y)+\frac{1}{2\lambda}\|x-y\|^2\right\}\) \\
\(X^\star\) & Set of minimizers of \(f\): \(X^\star=\argmin f\) \\
\(X^\star_{\lambda,\delta}\) & Set of minimizers of \(f^{\lambda,\delta}\): \(X^\star_{\lambda,\delta}=\argmin f^{\lambda,\delta}\) \\
\(\zprox^\delta_{\lambda,f}\) & Zeroth-order prox: 
\(\displaystyle
\zprox^\delta_{\lambda,f}(x)
=
\frac{
\mathbb{E}_{y\sim\mathcal{N}(x,\lambda\delta I)}
\left[y\exp(-f(y)/\delta)\right]
}{
\mathbb{E}_{y\sim\mathcal{N}(x,\lambda\delta I)}
\left[\exp(-f(y)/\delta)\right]
}
\) \\
\(\szprox^{\delta,N}_{\lambda,f}\) & Sampled zeroth-order prox:
\(\displaystyle
\frac{
\sum_{i=1}^N y_i \exp(-f(y_i)/\delta)
}{
\sum_{i=1}^N \exp(-f(y_i)/\delta)
},\quad
y_i\sim\mathcal{N}(x^k,\lambda\delta I)
\) \\
\(\operatorname{osc}(V)\) & Oscillation of \(V:\R^d\to \R\): \(\sup V-\inf V\) \\
\(\operatorname{dist}(x,A)\) & Euclidean distance of a point \(x\in \R^d\) to a set \(A\subset \R^d\) \\
\bottomrule
\end{tabular}
\end{table}

\subsection{Approximating the proximal operator with Hamilton-Jacobi}\label{app:HamiltonJacobi}

Consider a function \(f:\R^d\to\R\). For any \(\lambda>0\), the Moreau envelope of \(f\) is defined by
\[
f^\lambda(x)=\inf_{y\in\mathbb R^d}
\left\{
f(y)+\frac{1}{2\lambda}\|y-x\|^2
\right\}.
\]
The set of minimizers in this problem is denoted by \(\prox_{\lambda f}(x)\). When the minimizer is unique, we call it the proximal point, and \(\prox_{\lambda f}\) is the proximal point operator. In particular, when the proximal point is unique and the Moreau envelope is differentiable, for any \(x\in\R^d\) it holds that
\begin{equation}\label{eq:prox_env}\prox_{\lambda f}(x)=x-\lambda\nabla f^\lambda(x).\end{equation}

This property of the proximal operator is noticeable since it relates it to the Hamilton-Jacobi partial differential equation (we refer to \cite[Chapter 10]{evans} for the definition and properties of such PDE). It is indeed well known that the following system
\begin{equation}\label{eq:HJ_intro}\tag{HJ}
    \left\{\begin{aligned}
        &\partial_t u(x,t) + \frac{1}{2}\left\|\nabla u(x,t)\right\|^2 = 0,\\
        &u(x,0)=f(x),
        \end{aligned}\right.
\end{equation}
is solved by the Moreau envelope i.e. for any \(x\in\R^d\) and \(t\ge0\), \[u(x,t)=f^t(x).\]

By incorporating an additive term to \eqref{eq:HJ_intro}, one can obtain the viscous Hamilton-Jacobi or Hamilton-Jacobi-Bellman equation, depending on the parameter \(\delta>0\):
\begin{equation}\label{eq:HJB_intro}\tag{HJB}
    \left\{\begin{aligned}
        &\partial_t u^\delta(x,t) + \frac{1}{2}\left\|\nabla u^\delta(x,t)\right\|^2 = \frac{\delta}{2}\Delta u^\delta,\\
        &u^\delta(x,0)=f(x).
        \end{aligned}\right.
\end{equation}
By exploiting the Hopf-Lax transformation, i.e. introducing the variable \(v^\delta:x,t\mapsto \exp\left(-\frac{u^\delta(x,t)}{\delta}\right)\), one can observe that \(v^\delta\) is the solution of the heat equation with the following initial condition:
\[v^\delta(x,0)=\exp\left(-\frac{f(x)}{\delta}\right).\]
It follows that the soft Moreau envelope \(f^{\lambda,\delta}:=u^\delta(\cdot,\lambda)\) can be written as
\[\forall x\in\R^d,\quad f^{\lambda,\delta}(x)=-\delta\log\left(\left(2\pi \lambda\delta\right)^{-\frac{d}{2}}\int_{\R^d}\exp\left(-\frac{f(y)}{\delta}\right)\exp\left(-\frac{\|x-y\|^2}{2\lambda\delta}\right)dy\right).\]

The viscous proximal operator or Zeroth Order Proximal Operator is defined by analogy with \eqref{eq:prox_env} as
\begin{equation}\label{eq:zprox_softenvelope_app}
    \forall x \in\R^d,\quad \zprox^{\delta}_{\lambda, f}(x)=x-\lambda\nabla f^{\lambda,\delta}(x).
\end{equation}

It is easy to see by simple computations, once noticed that derivation under the integral sign is possible, that the latter corresponds to \eqref{eq:def_zprox_intro}. We formalize this observation in the following remark, since we will use it several times later on.

\begin{remark}[Differentiation under the integral sign]\label{rem:differentiation}
The derivations under the integral sign and the integration by parts in our results are justified. Assuming $f$ is bounded from below, the function $\exp\left(-\frac{f(y)}{\delta}\right)\exp\left(-\frac{\|x-y\|^2}{2\lambda\delta}\right)$ exhibits Gaussian tails due to the $\exp(-\|x-y\|^2/(2\lambda\delta))$ factor. Consequently, for any $x$ restricted to a compact neighborhood, the derivatives of the integrand with respect to $x$ are uniformly bounded by a function of the form $P(\|y\|)\exp(-c\|y\|^2)$, where $P$ is a polynomial and $c>0$. Because this dominating function is absolutely integrable on $\mathbb{R}^d$, the Dominated Convergence Theorem guarantees the existence of all moments (e.g., $\int_{\mathbb{R}^d}\|y\|^2 q_x(y) dy < \infty$) and permits interchanging the gradients $\nabla_x$ and $\nabla_x^2$ with the integral over $y$.
\end{remark}

\subsection{Some other known results}\label{app:known_results}

The operator \(\zprox_{\lambda,f}^{\delta}\) has appeared in several closely related forms in the recent literature, mostly as a smooth, zeroth-order approximation of the classical proximal operator. In \cite{osher2023hamilton}, the formula is derived from the viscous Hamilton--Jacobi equation through the Cole--Hopf transformation, as recalled in Appendix~\ref{app:HamiltonJacobi}. The resulting object, called HJ-Prox, is proposed as a way to approximate proximal operators using only function evaluations, possibly from black-box oracles. Under weak convexity assumptions, and for a sufficiently small proximal time so that the proximal point is unique, \cite[Theorem 1]{osher2023hamilton} proves that this approximation converges to the exact proximal operator as \(\delta\to0\). The same work also emphasizes the smoothing effect of the viscosity parameter \(\delta\).

A complementary viewpoint is developed in \cite{tibshirani2025laplace}, where the same formula is obtained directly from Laplace's method. More generally, for an infimal convolution
\[
    \inf_y \{f(y)+g(x-y)\},
\]
the authors consider the self-normalized Laplace approximation
\[
    y_x^\delta
    =
    \frac{
    \int y\exp\left(-\frac{f(y)+g(x-y)}{\delta}\right)\,dy
    }{
    \int \exp\left(-\frac{f(y)+g(x-y)}{\delta}\right)\,dy
    }.
\]
Choosing \(g(z)=\|z\|^2/(2\lambda)\) gives exactly \(\zprox_{\lambda,f}^{\delta}\). This perspective shows that ZOPO is a special case of a broader Laplace approximation to minimizers of infimal-convolution problems. The analysis in \cite{tibshirani2025laplace} proves convergence of \(y_x^\delta\) to the corresponding minimizer as \(\delta\to0\), under mild assumptions, and also hints to extensions such as smoothed projections and Bregman-type proximal maps.

Related results were obtained in \cite{zhang2024inexact}. There, the Gibbs expectation associated with
\[
    \varphi_x(y)=f(y)+\frac{1}{2\lambda}\|y-x\|^2
\]
is studied as an approximation of the exact proximal point. If \(\varphi_x\) has a unique global minimizer, this expectation converges to \(\prox_{\lambda f}(x)\) as \(\delta\to0\); under additional nondegeneracy and local \(C^2\) assumptions, the approximation error is of order \(O(\delta)\), see \cite[Theorem 4]{zhang2024inexact}. In the multi-valued case, the limiting point belongs to the convex hull of the proximal set \cite[Corollary 1]{zhang2024inexact}. These approximation results are then used within an inexact proximal-point framework for zeroth-order global optimization, including Monte Carlo and tensor-train based estimators.

Finally, ZOPPA was also introduced as Consensus Hopping (CH) in \cite{Fornasier2024}. A convergence analysis restricted to the small-\(\delta\), large-\(\lambda\) regime was recently given in \cite{Fornasier2026}. In particular, under local Lipschitzianity and quadratic growth assumptions on \(f\), it is shown that, for \(\delta\) sufficiently small and \(\lambda\) sufficiently large, the Gaussian measure
\[
    \rho_k:=\mathcal{N}(x^k,\lambda\delta I)
\]
associated with the CH iteration converges to a stationary distribution.

Overall, these works exploit the regime \(\delta\to0\), where \(\zprox_{\lambda,f}^{\delta}\) approximates the true proximal operator and can therefore be treated as an inexact proximal map. In contrast, the present paper studies the fixed-temperature regime. Rather than viewing ZOPO as an approximation error around \(\prox_{\lambda f}\), we show that for every fixed \(\delta>0\), ZOPPA is an exact proximal-point method for an auxiliary objective and derive convergence guarantees directly in this high-temperature regime.

\section{Main convergence results: proofs for Sections \ref{sec:setting} and \ref{sec:main_results}}

We aim to study the following method
\begin{equation}\label{eq:zprox_alg}
    x^0\in \R^d, \qquad x^{k+1} = \zprox_{\lambda, f}^\delta (x^k),\quad k \geq 0.
    \end{equation}

While the existent literature focus on the regime $\delta \to 0$ and relies on the fact that $\zprox _{\lambda, f}^\delta \to \prox_{\lambda f}$ for $\delta\to 0$, we focus on the case $\delta$ fixed and bounded away from zero. While in \cite{zhang2024inexact} interpret the algorithm as an inexact proximal point algorithm, we cannot interpret the method as such. In fact, through this view point we can only say that we commit an error at each iteration which size does not decrease along iterations, and this a priori could hinge the convergence. However, we show in this section that the convergence of the method can be still guaranteed when $\delta$ is fixed.

\subsection{Interpretation as MMSE}

In this section we interpret the zeroth order prox as an MMSE. By doing this, we enable ourselves to exploit the part of literature that provides convergence guarantees for algorithms using MMSE in PnP methods \cite{Gribonval11,Gribonval2013,xu-2020-provable}.

Let $f:\mathbb{R}^d\to\mathbb{R}$ satisfying Assumption~\ref{ass:on_f} and fix $\delta>0$ and $\lambda>0$.
Define the Gibbs density
\begin{equation}\label{eq:Cdelta}
    p_Y(y)=\frac{\exp\!\left(-\frac{f(y)}{\delta}\right)}{C_{\delta}}, \qquad C_{\delta}:= \int \exp\left(\frac{-f(y)}{\delta}\right)\, dy,
\end{equation}
and consider the additive Gaussian noise model
\[
X \;=\; Y + B,\qquad B \sim\mathcal N(0,\lambda\delta \, \mathrm{Id}),
\]
where $Y\sim p_Y$ and $B$ is independent of $Y$. Clearly, this implies that
\begin{equation}
p_X(x) =\frac{1}{C_\delta (2\pi\lambda \delta)^{d/2}}\int_{\mathbb{R}^d}\exp\!\left(-\frac{f(y)}{\delta}\right)\,
\exp\!\left(-\frac{\|y-x\|^2}{2\lambda\delta}\right)\,dy.
\end{equation}

The likelihood of $X$ given $Y=y$ is
\[
p_{X\mid Y}(x\mid y)
=\frac{1}{(2\pi\lambda\delta)^{d/2}}
\exp\!\left(-\frac{\|x-y\|^2}{2\lambda\delta}\right).
\]
Hence Bayes' rule yields the posterior (up to normalization):
\begin{align*}
p_{Y\mid X}(y\mid x)
&= \frac{p_{X\mid Y}(x\mid y)\,p_Y(y)}{p_X(x)} \\
&= \frac{1}{Z_{\lambda,\delta}(x)}\exp\!\left(-\frac{\|x-y\|^2}{2\lambda\delta}\right)\,
       \exp\!\left(-\frac{f(y)}{\delta}\right)\\
&=\frac{1}{Z_{\lambda,\delta}(x)}\exp\!\left(
-\frac{1}{\delta}\Big(f(y)+\frac{1}{2\lambda}\|y-x\|^2\Big)
\right),
\end{align*}
where
\begin{equation}
Z_{\lambda,\delta}(x)\;:= C_\delta (2\pi\lambda \delta)^{d/2} p_X(x) =\;\int_{\mathbb{R}^d}\exp\!\left(-\frac{f(y)}{\delta}\right)\,
\exp\!\left(-\frac{\|y-x\|^2}{2\lambda\delta}\right)\,dy.
\label{eq:Zz}
\end{equation}
The minimum mean-square error (MMSE) estimator of $Y$ given $X=x$ is the posterior mean:
\begin{align*}
\mathbb{E}[Y\mid X=x]
&=\int_{\mathbb{R}^d} y\,p_{Y\mid X}(y\mid x)\,dy \\
&=\frac{\int_{\mathbb{R}^d} y\,\exp\!\left(-\frac{f(y)}{\delta}\right)\exp\!\left(-\frac{\|y-x\|^2}{2\lambda\delta}\right)\,dy}
        {\int_{\mathbb{R}^d}   \exp\!\left(-\frac{f(y)}{\delta}\right)\exp\!\left(-\frac{\|y-x\|^2}{2\lambda\delta}\right)\,dy}.
\end{align*}
Therefore the zeroth-order proximal operator (ZOPO) is exactly the MMSE denoiser:
\[
\zprox_{\lambda, f}^{\delta}(x)\;=\;\mathbb{E}[Y\mid X=x].
\]
The interpretation of ZOPO as MMSE is related, but not equivalent, to the analysis carried out in \cite{darbon2021connecting}.

\subsection{Zeroth order prox as prox of another function: proof of Theorem~\ref{thm:zoppa_is_prox}}\label{app:proof_of_zoppa_is_prox}

In this section, we exploit the relation between MMSE and proximity operators first discovered in \cite{Gribonval11} and later extended to non-white noise $B$ in \cite{Gribonval2013}. The first result we give is partly a consequence of these works, but we also establish the surjectivity of the zeroth-order proximal operator $\zprox_{\lambda,f}^\delta$. By considering strictly positive Gibbs-type measures of the form $e^{-f/\delta}$ satisfying Assumption~\ref{ass:on_f}, we demonstrate that the operator is not merely smooth, but a global diffeomorphism.

\begin{proposition}[Bijectivity and regularity of ZOPO]\label{prop:zopo_surjective}
Let \(f:\mathbb{R}^d\to\mathbb{R}\) satisfy Assumption~\ref{ass:on_f}. Then $\zprox^{\delta}_{\lambda, f}$ is a smooth diffeomorphism, i.e., it is $C^\infty$, bijective and with $C^\infty$ inverse.
\end{proposition}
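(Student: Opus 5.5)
We will work with the representation \eqref{eq:zprox_softenvelope_app}, $T:=\zprox^{\delta}_{\lambda,f}=\nabla\psi$ with $\psi(x):=\frac{1}{2}\|x\|^2-\lambda f^{\lambda,\delta}(x)$. The plan is to show that $\psi$ is a $C^\infty$, strictly convex, supercoercive function. Then $\nabla\psi$ is a $C^\infty$ bijection, and the inverse function theorem gives a $C^\infty$ inverse.

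\textbf{Smoothness.} By Remark~\ref{rem:differentiation}, $Z_{\lambda,\delta}$ is $C^\infty$ and strictly positive, so $f^{\lambda,\delta}=-\delta\log((2\pi\lambda\delta)^{-d/2}Z_{\lambda,\delta})$ and $T$ are $C^\infty$. Note that $e^{-f/\delta}\le e^{-\min f/\delta}$ by A1, so we do not need boundedness below as a separate hypothesis.

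\textbf{Strict convexity.} We compute the Jacobian by differentiating under the integral, using the posterior $\mu_x$ with density $q_x$:
\[
\nabla T(x)=\frac{1}{\lambda\delta}\operatorname{Cov}_{\mu_x}(Y).
\]
This is the Tweedie-type identity $\nabla^2\psi(x)=\frac{1}{\lambda\delta}\operatorname{Cov}[Y\mid X=x]$. Since $q_x>0$ everywhere on $\R^d$, $\mu_x$ is not supported on any hyperplane. Hence the covariance is positive definite, and $\psi$ is strictly convex with $\nabla^2\psi\succ0$ everywhere. Two consequences follow. First, $T$ is injective: $\langle T(x)-T(x'),x-x'\rangle>0$ for $x\neq x'$, obtained by integrating $\nabla T$ along the segment. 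Second, $\nabla T(x)$ is invertible at every point, so by the inverse function theorem $T$ is a local $C^\infty$ diffeomorphism. Once bijectivity is known, $T^{-1}$ is therefore globally $C^\infty$.

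\textbf{Surjectivity.} This is the step I expect to be the main obstacle. For a convex $C^1$ function $\psi$, $\nabla\psi(\R^d)$ equals all of $\R^d$ as soon as $\psi$ is supercoercive, i.e.\ $\psi(x)/\|x\|\to\infty$. Indeed, for any $z$, the function $\psi-\langle z,\cdot\rangle$ is then coercive, has a minimizer $x$, and at that point $\nabla\psi(x)=z$. So it suffices to show $\lambda f^{\lambda,\delta}(x)\le\frac12\|x\|^2-\omega(\|x\|)$ with $\omega(r)/r\to\infty$. Expanding the square in $Z_{\lambda,\delta}$ gives
\[
\psi(x)=\lambda\delta\log\int e^{-f(y)/\delta-\|y\|^2/(2\lambda\delta)}e^{\langle x,y\rangle/(\lambda\delta)}dy+\text{const}.
\]
Thus $\psi$ is, up to scaling, the log-Laplace transform of the finite measure $\nu(dy)=e^{-f/\delta-\|y\|^2/(2\lambda\delta)}dy$, which has full support. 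Restricting the integral to a ball $B(r u,1)$ with $u=x/\|x\|$ bounds the integral below by $c_r e^{(r-1)\|x\|/(\lambda\delta)}$, where $c_r>0$ because $\nu$ has positive continuous density. Hence $\liminf\psi(x)/\|x\|\ge r-1$ for every $r$, which is supercoercivity. The delicate points are uniformity in the direction $u$ and continuity of the density; both follow from continuity of $f$ (A1) and compactness of $\{\|y\|\le r+1\}$. Combining this with injectivity yields bijectivity, and together with the inverse function theorem this completes the diffeomorphism claim.
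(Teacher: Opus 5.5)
Your proposal is correct, and it takes a genuinely different, self-contained route.

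\textbf{The paper's route.} It cites Gribonval's MMSE results for injectivity and for $C^\infty$ regularity of $\zprox^{\delta}_{\lambda,f}$ and of its inverse on its image. For surjectivity, it writes $\zprox^{\delta}_{\lambda,f}(x)=\nabla A(x/(\lambda\delta))$, where $A$ is the log-partition function of the exponential family with base measure $\nu$. It then shows the mean-parameter space is all of $\R^d$ via the explicit densities $g_z/h$, and invokes the mean-parameterization theorem for minimal exponential families (Wainwright--Jordan, Thm.~3.3).

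\textbf{Your route.} Your $\psi$ equals $\lambda\delta A(\cdot/(\lambda\delta))$ up to an additive constant, so you work with the same object, but you prove every step by hand.
\begin{itemize}
\item The Hessian identity $\nabla^2\psi=\Sigma_{\lambda,\delta}/(\lambda\delta)$ is established in the paper only later, as Theorem~\ref{thm:hessian}, but its proof does not use this proposition, so there is no circularity. Combined with the full support of $\mu_x$, it gives $\nabla^2\psi\succ0$.
\item From $\nabla^2\psi\succ0$ you get strict monotonicity, hence injectivity, and everywhere-invertible Jacobians. The inverse function theorem then upgrades bijectivity to a global $C^\infty$ inverse.
\item Your supercoercivity estimate rests on a lower bound for $\nu(B(ru,1))$ that is uniform in $u$. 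In convex-analytic terms it says that the conjugate $A^*$ is finite everywhere. That is exactly what the abstract mean-parameterization theorem encodes when the mean-parameter space is $\R^d$.
\end{itemize}

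\textbf{What each buys.} Your argument avoids external black boxes and makes explicit the two places where the full support of $\nu$ enters: positive definiteness of the covariance, and positivity of $c_r$. The paper's argument is shorter, and it reuses the Gribonval framework that it needs anyway to obtain the prox representation in Theorem~\ref{thm:zoppa_is_prox}.
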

\begin{proof}  First, according to \cite[Corollary 1 (1)]{Gribonval2013}, the map $T_{\lambda,\delta}:= \operatorname{zprox}_{\lambda,f}^\delta$ is one-to-one (injective), which ensures that the inverse $T_{\lambda,\delta}^{-1}: \mathrm{Im}(T_{\lambda,\delta}) \to \mathbb{R}^d$ is well defined. Furthermore, \cite[Corollary 1 (2)]{Gribonval2013} establishes that $T_{\lambda,\delta}$ is $C^\infty$ and that its inverse is also $C^\infty$ on its image. 

It remains to show that $\mathrm{Im}(T_{\lambda,\delta})= \R^d$. Define the measure
\[
    d\nu(y)
    :=
    \exp\left(
        -\frac{f(y)}{\delta}
        -
        \frac{\|y\|^2}{2\lambda\delta}
    \right)\,dy.
\]
Clearly, it is finite ($\nu(\mathbb{R}^d)\leq C_\delta$) and it is strictly positive, so that $\operatorname{supp}(\nu)=\mathbb{R}^d$. Notice that for every
\(\theta,y\in\mathbb{R}^d\), we have $\langle \theta,y\rangle-\frac{\|y\|^2}{2\lambda\delta}\leq\frac{\lambda\delta}{2}\|\theta\|^2 $ and thus
\[
    \int_{\mathbb{R}^d}
    e^{\langle \theta,y\rangle}\,d\nu(y)
    \leq
    \exp\left(\frac{\lambda\delta}{2}\|\theta\|^2\right)
    \int_{\mathbb{R}^d} e^{-f(y)/\delta}\,dy
    <+\infty .
\]
Therefore, we can consider the exponential family of measures \cite[Equation (3.5)]{WainwrightJordan2008} \[\left\{p_\theta(y) = \frac{e^{\langle\theta,y\rangle}}{\int_{\mathbb{R}^d}
    e^{\langle \theta,z\rangle}\,d\nu(z)} \mid \theta \in \R^d\right\}.\] 
    Notice that here the sufficient statistic (see \cite[Section 3.2]{WainwrightJordan2008} for details) is the identity and that the family is minimal (see definition in \cite[Section 3.2]{WainwrightJordan2008}), because if $\langle a,y\rangle=b$ $\nu$-a.e. for some \(a\in\mathbb{R}^d\) and \(b\in\mathbb{R}\), then the support of \(\nu\) would be contained in an affine hyperplane unless \(a=0\). Since \(\operatorname{supp}(\nu)=\mathbb{R}^d\), this is impossible for \(a\neq0\). Consider now the log-partition function \cite[Equation (3.6)]{WainwrightJordan2008}
\[
    A(\theta)
    :=
    \log
    \int_{\mathbb{R}^d}
    e^{\langle \theta,y\rangle}\,d\nu(y),
    \qquad \theta\in\mathbb{R}^d .
\]
Since the Gaussian term gives quadratic decay, differentiation under the integral sign is justified,
and
\[
    \nabla A(\theta)
    =
    \frac{
    \displaystyle\int_{\mathbb{R}^d}
    y e^{\langle\theta,y\rangle}\,d\nu(y)
    }{
    \displaystyle\int_{\mathbb{R}^d}
    e^{\langle\theta,y\rangle}\,d\nu(y)
    }.
\]
Expanding the quadratic term we notice that $-\frac{\|y-x\|^2}{2\lambda\delta}= -\frac{\|y\|^2}{2\lambda\delta}+\frac{\langle x,y\rangle}{\lambda\delta}-\frac{\|x\|^2}{2\lambda\delta}$, and since the last term is independent of \(y\), it cancels in the ratio defining \(T_{\lambda,\delta}\). Hence
\[
    T_{\lambda,\delta}(x)
    =
    \nabla A\left(\frac{x}{\lambda\delta}\right).
\]

By definition, the mean-parameter space is \cite[Equation (3.26)]{WainwrightJordan2008}
\[M=
    \left\{
    m\in\mathbb{R}^d:
    \exists r\geq 0,\ \int r\,d\nu=1,\ 
    m=\int_{\mathbb{R}^d} y\,r(y)\,d\nu(y)
    \right\}.
\]
Let \(z\in\mathbb{R}^d\). Since \(\nu\) has a strictly positive Lebesgue density, say $d\nu(y)=h(y)\,dy$, $h(y)>0$, we can choose the gaussian
\[
    g_z(y)
    =
    (2\pi)^{-d/2}
    \exp\left(-\frac{\|y-z\|^2}{2}\right),
\]
which has mean \(z\). Define
\[
    r_z(y):=\frac{g_z(y)}{h(y)}.
\]
Then \(r_z\) is a density with respect to \(\nu\), since
\[
    \int_{\mathbb{R}^d} r_z(y)\,d\nu(y)
    =
    \int_{\mathbb{R}^d} g_z(y)\,dy
    =
    1,
\]
and its mean is
\[
    \int_{\mathbb{R}^d} y\,r_z(y)\,d\nu(y)
    =
    \int_{\mathbb{R}^d} y\,g_z(y)\,dy
    =
    z.
\]
Since \(z\in\mathbb{R}^d\) was arbitrary, \(M=\mathbb{R}^d\), and hence
the interior of $M$ is $\R^d$. We now apply the mean-parameterization theorem for minimal exponential families \cite[Theorem 3.3]{WainwrightJordan2008}: for a minimal exponential family, the gradient of the log-partition function maps the natural parameter space onto the interior of the mean-parameter space. In our case
\[
    \nabla A(\mathbb{R}^d)=\mathbb{R}^d .
\]
Finally, since \(x/(\lambda\delta)\) ranges over all of \(\mathbb{R}^d\) as
\(x\) ranges over \(\mathbb{R}^d\), we obtain
\[
    T_{\lambda,\delta}(\mathbb{R}^d)
    =
    \mathbb{R}^d .
\]
\end{proof}

\begin{theorem}
    Suppose Assumption~\ref{ass:on_f} is satisfied. Then, there exists a function $H_{\lambda,\delta}:\R^d \to \R$ such that
    \begin{equation}\label{eq:Zprox_is_prox}
    \zprox^{\delta}_{\lambda, f}= \prox_{\lambda \delta H_{\lambda,\delta}},
\end{equation}
and therefore the sequence $(x^k)_{k\in\mathbb{N}}$ generated by Algorithm \ref{alg:zoppa} satisfies
\begin{equation}\label{eq:Zprox_is_prox_alg}
    x^{k+1} = \prox_{\lambda \delta H_{\lambda,\delta}} (x^k).
\end{equation}
i.e., $\{x^k\}_k$ is a proximal point iteration.
\end{theorem}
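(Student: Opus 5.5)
We follow the Gribonval construction, using Proposition~\ref{prop:zopo_surjective} to make the candidate function defined on all of $\R^d$. Write $T:=\zprox^{\delta}_{\lambda,f}$. By Proposition~\ref{prop:zopo_surjective}, $T$ is a $C^\infty$ diffeomorphism of $\R^d$. Hence $T^{-1}$ is defined everywhere, and $H_{\lambda,\delta}$ as given in \eqref{eq:H_def} is a well-defined $C^\infty$ function on $\R^d$. It remains to show that for every $x$, the point $T(x)$ is the \emph{unique global} minimizer of
\[
\Phi_x(z):=H_{\lambda,\delta}(z)+\tfrac{1}{2\lambda\delta}\|z-x\|^2 .
\]

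\textbf{Step 1: $T$ is a gradient map of a convex potential.} By the computation in the proof of Proposition~\ref{prop:zopo_surjective}, $T(x)=\nabla A(x/(\lambda\delta))$. Equivalently, $T=\nabla\psi$ with
\[
\psi(x):=\tfrac12\|x\|^2-\lambda f^{\lambda,\delta}(x),
\]
which matches \eqref{eq:zprox_softenvelope_app}. Since $A$ is the log-partition function of a minimal exponential family, it is strictly convex; in fact $\nabla^2 A$ is a covariance matrix, positive definite because $\nu$ has full support. Therefore $\psi=\lambda\delta\,A(\cdot/(\lambda\delta))+\text{const}$ is strictly convex and smooth.

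\textbf{Step 2: identify $\lambda\delta H_{\lambda,\delta}$ through the conjugate.} Set $y=T^{-1}(z)$. Using the Fenchel identity $\psi^*(z)=\langle z,y\rangle-\psi(y)$, which holds since $z=\nabla\psi(y)$, we expand \eqref{eq:H_def}:
\[
\lambda\delta H_{\lambda,\delta}(z)=-\tfrac12\|y-z\|^2+\lambda f^{\lambda,\delta}(y)
=-\tfrac12\|y-z\|^2+\tfrac12\|y\|^2-\psi(y)
=\psi^*(z)-\tfrac12\|z\|^2 .
\]
Thus $\lambda\delta H_{\lambda,\delta}=\psi^*-\tfrac12\|\cdot\|^2$ on all of $\R^d$. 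Here we use that $\nabla\psi=T$ is onto, so $\operatorname{dom}\partial\psi^*=\R^d$ and $\psi^*$ is finite, $C^\infty$, and strictly convex.

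\textbf{Step 3: minimization.} We now get
\[
\lambda\delta\,\Phi_x(z)=\psi^*(z)-\langle z,x\rangle+\tfrac12\|x\|^2 .
\]
This is a convex function of $z$, so its critical points are exactly its global minimizers. The optimality condition $\nabla\psi^*(z)=x$ is equivalent to $z=\nabla\psi(x)=T(x)$, and it has the unique solution $z=T(x)$ by injectivity. Hence $\prox_{\lambda\delta H_{\lambda,\delta}}(x)=\{T(x)\}$, which proves \eqref{eq:Zprox_is_prox}, and \eqref{eq:Zprox_is_prox_alg} follows immediately from the definition of Algorithm~\ref{alg:zoppa}.

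\textbf{Main obstacle.} The delicate point is global existence and uniqueness of the minimizer, not just stationarity. In Gribonval's original setting the potential may equal $+\infty$ outside $\operatorname{Im}T$. Surjectivity from Proposition~\ref{prop:zopo_surjective} removes this problem, and the convexity of $\psi^*$, inherited from the strict convexity of the log-partition function $A$, upgrades stationarity to global optimality. The argument does not need $H_{\lambda,\delta}$ itself to be convex, only the sum $H_{\lambda,\delta}+\frac{1}{2\lambda\delta}\|\cdot-x\|^2$, and that is exactly what the conjugate representation gives.
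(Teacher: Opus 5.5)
Your argument is correct, but it takes a different route from the paper's.

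\textbf{The paper's route.} The paper does no convex analysis of its own. It rescales to unit noise ($\tilde X=\sigma^{-1}X$ with $\sigma^2=\lambda\delta$) and quotes Gribonval's theorem that the MMSE denoiser satisfies $\tilde T=\prox_{\tilde H}$ with $\tilde H=-\frac12\|\tilde T^{-1}(\cdot)-\cdot\|^2-\log p_{\tilde X}(\tilde T^{-1}(\cdot))$. It transports this back through the scaling relation $\sigma^{-1}T=\tilde T\circ\sigma^{-1}$. It then tracks the additive constants ($-d\log\sigma$, $\log C_\delta$, $\frac d2\log(2\pi\lambda\delta)$) to reach \eqref{eq:H_def}, discarding them because constants do not change the prox.

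\textbf{Your route.} You work directly at scale $\lambda\delta$. You observe that $T=\nabla\psi$ with $\psi=\frac12\|\cdot\|^2-\lambda f^{\lambda,\delta}=\lambda\delta A(\cdot/(\lambda\delta))+\mathrm{const}$, which is strictly convex. The Fenchel equality then gives the closed form $\lambda\delta H_{\lambda,\delta}=\psi^*-\frac12\|\cdot\|^2$. Global uniqueness of the minimizer follows from convexity of $\psi^*-\langle\cdot,x\rangle$ together with $\partial\psi(x)=\{T(x)\}$. This is essentially the mechanism inside Gribonval's proof, re-derived from the exponential-family structure already set up for Proposition~\ref{prop:zopo_surjective}.

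\textbf{What each buys.} Your version has these advantages:
\begin{itemize}
\item It is self-contained.
\item It produces exactly the $H_{\lambda,\delta}$ of \eqref{eq:H_def}, rather than a function agreeing with it up to a constant.
\item It makes clear where surjectivity enters: it makes $\psi^*$ finite on all of $\R^d$, so $H_{\lambda,\delta}$ is real-valued.
\item It exposes structure the paper later imports by citation. Namely, $H_{\lambda,\delta}+\frac{1}{2\lambda\delta}\|\cdot\|^2=\psi^*/(\lambda\delta)$ is convex. Also, $H_{\lambda,\delta}$ is convex iff $\psi^*$ is $1$-strongly convex, iff $\nabla\psi$ is $1$-Lipschitz, iff $\nabla^2 f^{\lambda,\delta}\succeq 0$. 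This is precisely the equivalence used in the convex case (Theorem~\ref{thm:fne}).
\end{itemize}
The paper's route is shorter. It also keeps the Bayesian form $-\log p_X\circ T^{-1}$ visible, which ties the result to the MMSE/plug-and-play literature it builds on.
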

\begin{proof} Let $T_{\lambda,\delta}:= \zprox_{\lambda,f}^\delta$ and let $\sigma^2 := \lambda \delta$. First, let define $\tilde X = \sigma^{-1} X$ so that \[\tilde X =\sigma^{-1}Y + \tilde B\quad \text{with} \quad \tilde B\sim \mathcal{N}(0,I),\] and the operator \[\tilde T(x):=\mathbb{E}[\sigma^{-1}Y \,|\, \tilde X = x].\]
Clearly, we have
\[\begin{aligned}
p_{\tilde X}(x) & = \frac{\sigma^d}{C_\delta (2\pi)^{d/2} }\int_{\mathbb{R}^d}\exp\!\left(-\frac{f(\sigma y)}{\delta}\right)\,
\exp\!\left(-\frac{\|x-y\|^2}{2}\right)\,dy \\ &=\frac{1}{C_\delta (2\pi)^{d/2} }\int_{\mathbb{R}^d}\exp\!\left(-\frac{f(y)}{\delta}\right)\,
\exp\!\left(-\frac{\|x-\sigma^{-1}y\|^2}{2}\right)\,dy = \sigma^{d}p_{X}(\sigma \, x).
\end{aligned}\]
By \cite[Theorem 1 (3)]{Gribonval2013} (which is \cite[Theorem 1 (2)]{Gribonval11}), we have
\[\tilde T = \prox_{\tilde H}\]
with 
\[\tilde H(x) := - \frac{1}{2} \|\tilde T ^{-1} (x) - x\|^2-\log p_{\tilde X} (\tilde T^{-1}(x)).\] 
By \cite[Corollary 1 (3)]{Gribonval2013}) we have
\[T_{\lambda,\delta}(x) = \argmin \frac{1}{2}\|y-x\|_{\sigma^2}^2 + \bar H(y) =\prox_{\sigma^2 \bar H}(x),\]
with 
\[\bar H(x) = \tilde H(\sigma^{-1}x) = - \frac{1}{2} \|\tilde T ^{-1} (\sigma^{-1} x) - \sigma^{-1}x\|^2-\log p_{\tilde X} (\tilde T^{-1}(\sigma^{-1}x)).\]
Now, it is possible to see (and it is explicit in the computations in \cite[Corollary 1]{Gribonval2013}) that $\sigma^{-1}   T_{\lambda,\delta} = \tilde T \circ \sigma^{-1}$. So that, we have
\[\begin{aligned}
    \bar H(x) &= - \frac{1}{2\sigma^2} \| T_{\lambda,\delta} ^{-1} (x) - x\|^2-\log p_{\tilde X} (\sigma^{-1}  T_{\lambda,\delta}^{-1}(x))\\
    & = - \frac{1}{2\sigma^2} \| T_{\lambda,\delta} ^{-1} (x) - x\|^2-\log p_{ X} ( T_{\lambda,\delta}^{-1}(x)) - d\log(\sigma).
    \end{aligned}\]
Recalling the definition of $Z_{\lambda,\delta}$ in \eqref{eq:Zz} and $C_\delta$ in \eqref{eq:Cdelta}, we have
\begin{equation}
- \log p_X(x) =-\log Z_{\lambda,\delta}(x) + \log C_\delta+\frac{d}{2}\log(2\pi\lambda\delta).
\label{eq:h_def_noD}
\end{equation}
and the fact that $\frac{1}{\delta}f^{\lambda,\delta}=- \log Z_{\lambda,\delta} +\frac{d}{2}\log(2\pi\lambda\delta)$, we  can define the function
\begin{equation}
H_{\lambda,\delta}(x) =
    -\frac{1}{2\lambda\delta}\big\| T_{\lambda,\delta}^{-1}(x)-x\big\|^2\;+\;\frac{1}{\delta}f^{\lambda,\delta}\!\big( T_{\lambda,\delta}^{-1}(x)\big).
\label{eq:H_def_app}
\end{equation} 
and have $\prox_{\lambda \delta H_{\lambda,\delta}} (x) = \prox_{\lambda \delta \bar H} (x) = T_{\lambda,\delta}(x)$.
\end{proof}

From this characterization, we can prove convergence of our method. We do this by first defining some properties of $H_{\lambda,\delta}$.

\begin{lemma}[Bounded below]
Under Assumption~\ref{ass:on_f}, the function $H_{\lambda,\delta}$ satisfies
\[H_{\lambda,\delta}(z)\geq - \log C_\delta +\frac{d}{2} \log(2\pi \lambda \delta), \qquad \forall z \in \R^d,\]
and therefore is bounded below.
\end{lemma}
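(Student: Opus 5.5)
Since $T_{\lambda,\delta}$ is a bijection of $\R^d$ (Proposition~\ref{prop:zopo_surjective}), every $z\in\R^d$ can be written as $z=T_{\lambda,\delta}(x)$ with $x=T_{\lambda,\delta}^{-1}(z)$. Substituting into \eqref{eq:H_def_app}, it suffices to show that for every $x\in\R^d$
\[
-\frac{1}{2\lambda\delta}\|x-T_{\lambda,\delta}(x)\|^2+\frac1\delta f^{\lambda,\delta}(x)\ \ge\ -\log C_\delta+\frac d2\log(2\pi\lambda\delta).
\]
Using $\frac1\delta f^{\lambda,\delta}(x)=-\log Z_{\lambda,\delta}(x)+\frac d2\log(2\pi\lambda\delta)$, this reduces to the inequality
\[
\log Z_{\lambda,\delta}(x)+\frac{1}{2\lambda\delta}\|x-T_{\lambda,\delta}(x)\|^2\le \log C_\delta .
\]

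The key step is a Jensen argument with respect to the posterior $q_x=\mu_x$. I write
\[
Z_{\lambda,\delta}(x)=\int e^{-f(y)/\delta}e^{-\|y-x\|^2/(2\lambda\delta)}dy
\]
and estimate $\log Z_{\lambda,\delta}(x)$ via the Gibbs variational principle. Equivalently, I look at $C_\delta=\int e^{-f/\delta}dy=Z_{\lambda,\delta}(x)\int q_x(y)\,e^{\|y-x\|^2/(2\lambda\delta)}dy$. This gives
\[
\log C_\delta-\log Z_{\lambda,\delta}(x)=\log\E_{\mu_x}\big[e^{\|y-x\|^2/(2\lambda\delta)}\big].
\]
Jensen's inequality (convexity of $\exp$) bounds the right-hand side below by $\E_{\mu_x}\|y-x\|^2/(2\lambda\delta)$. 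A second application of Jensen (convexity of $\|\cdot\|^2$) gives
\[
\E_{\mu_x}\|y-x\|^2\ge\|\E_{\mu_x}y-x\|^2=\|T_{\lambda,\delta}(x)-x\|^2 ,
\]
where the last equality is the MMSE identity $T_{\lambda,\delta}(x)=\E_{\mu_x}[y]$ established above. Chaining these gives exactly the required inequality.

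The only delicate point is justifying the identity for $C_\delta$ when the integral $\E_{\mu_x}[e^{\|y-x\|^2/(2\lambda\delta)}]$ may look dangerous. It is harmless: $q_x(y)e^{\|y-x\|^2/(2\lambda\delta)}=e^{-f(y)/\delta}/Z_{\lambda,\delta}(x)$ pointwise, so the integral equals $C_\delta/Z_{\lambda,\delta}(x)<\infty$ by A2. Finiteness of the first moment of $\mu_x$ follows from Remark~\ref{rem:differentiation}, or directly from the Gaussian factor together with A2. Boundedness below of $H_{\lambda,\delta}$ is then immediate, since the bound is a constant independent of $z$.
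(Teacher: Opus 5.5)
Your proof is correct, but it takes a different route from the paper's.

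\textbf{The paper's route.} The paper never passes through the posterior. It uses the proximal representation $T_{\lambda,\delta}=\prox_{\lambda\delta H_{\lambda,\delta}}$ from the preceding theorem. It compares the minimizer $T_{\lambda,\delta}(z)$ of $y\mapsto H_{\lambda,\delta}(y)+\frac{1}{2\lambda\delta}\|y-z\|^2$ with the competitor $y=z$. Evaluating the definition of $H_{\lambda,\delta}$ at $T_{\lambda,\delta}(z)$ then gives the pointwise comparison $H_{\lambda,\delta}(z)\ge \frac1\delta f^{\lambda,\delta}(z)$. The constant comes from the crude bound $Z_{\lambda,\delta}(z)\le C_\delta$, since the Gaussian factor is at most $1$.

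\textbf{Your route.} You parametrize $z=T_{\lambda,\delta}(x)$ and prove $\log Z_{\lambda,\delta}(x)+\frac{1}{2\lambda\delta}\|x-T_{\lambda,\delta}(x)\|^2\le\log C_\delta$ directly. You do this via the exact identity $C_\delta/Z_{\lambda,\delta}(x)=\E_{\mu_x}\bigl[e^{\|y-x\|^2/(2\lambda\delta)}\bigr]$ followed by Jensen's inequality.

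\textbf{What each buys.} Your argument uses only the definition of $H_{\lambda,\delta}$, the bijectivity of $T_{\lambda,\delta}$ and the MMSE identity. So it does not rely on the Gribonval prox characterization at all.

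It also proves slightly more. Replace your second Jensen step by the bias--variance decomposition $\E_{\mu_x}\|y-x\|^2=\|T_{\lambda,\delta}(x)-x\|^2+\operatorname{tr}\Sigma_{\lambda,\delta}(x)$. This gives $H_{\lambda,\delta}(T_{\lambda,\delta}(x))\ge -\log C_\delta+\frac d2\log(2\pi\lambda\delta)+\frac{1}{2\lambda\delta}\operatorname{tr}\Sigma_{\lambda,\delta}(x)$.

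The paper's route, in exchange, yields the same-point comparison $H_{\lambda,\delta}\ge\frac1\delta f^{\lambda,\delta}$. This is structurally informative: for instance, it immediately transfers the coercivity of $f^{\lambda,\delta}$ to $H_{\lambda,\delta}$. It also costs nothing once the prox representation is in hand.
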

\begin{proof}
Since $\exp(-\|z-y\|^2/(2\lambda\delta))\le 1$, for $Z_{\lambda,\delta}$ defined as in \eqref{eq:Zz}, we have $Z_{\lambda,\delta}(z)\le C_\delta$ for all $z$, hence
$-\log Z_{\lambda,\delta}(z)\ge -\log C_\delta$ and therefore
\[
\frac{1}{\delta}f^{\lambda,\delta}(z) = -\log Z_{\lambda,\delta}(z) + \frac{d}{2}\log(2\pi \lambda \delta) \ge - \log C_\delta + \frac{d}{2}\log(2\pi\lambda\delta)\qquad\forall z\in\mathbb{R}^d.
\]
Let $z \in \R^d$, then, by \eqref{eq:Zprox_is_prox} and \eqref{eq:H_def_app} we have
\[\begin{aligned}
    H_{\lambda,\delta}(z) &\geq H_{\lambda,\delta}(T_{\lambda,\delta}(z)) + \frac{1}{2\lambda \delta}\|T_{\lambda,\delta}(z) - z\|^2 \\
    &= \frac{1}{\delta} f^{\lambda, \delta}(z) -\frac{1}{2\lambda \delta}\|z - T_{\lambda,\delta}(z)\|^2 + \frac{1}{2\lambda \delta}\|T_{\lambda,\delta}(z) - z\|^2 \\
    & = \frac{1}{\delta}f^{\lambda, \delta}(z)\geq - \log C_\delta + \frac{d}{2} \log(2\pi \lambda \delta).  
    \end{aligned}\]
\end{proof}

\begin{proposition}\label{thm:zprox_convergence}
Let $f:\R^d\to \R$ satisfy Assumption~\ref{ass:on_f}. Then, the sequence $(x^k)_{k\in\mathbb{N}}$ generated by
Algorithm \ref{alg:zoppa} satisfies
\begin{enumerate}
\item[(i)] (\emph{Descent}) $H_{\lambda,\delta}(x^{k+1})\le H_{\lambda,\delta}(x^k) - \frac{1}{2\lambda \delta}\|x^{k+1}-x^k\|^2$ for all $k\ge 0$.
\item[(ii)] (\emph{Asymptotic stationarity}) $\|x^{k+1}-x^k\|\to 0$ as $k\to\infty$.
\end{enumerate}
\end{proposition}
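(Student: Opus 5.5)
Both claims follow from the proximal representation of the previous theorem together with the lower bound on $H_{\lambda,\delta}$ from the preceding lemma. Set $T_{\lambda,\delta}:=\zprox^{\delta}_{\lambda,f}$, so that by \eqref{eq:Zprox_is_prox_alg} we have $x^{k+1}=\prox_{\lambda\delta H_{\lambda,\delta}}(x^k)$.

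\textbf{(i).} The prox is single-valued and equal to $T_{\lambda,\delta}(x^k)$. By the meaning of \eqref{eq:Zprox_is_prox}, $x^{k+1}$ is a global minimizer of $y\mapsto H_{\lambda,\delta}(y)+\frac{1}{2\lambda\delta}\|y-x^k\|^2$. Comparing its value at the minimizer $y=x^{k+1}$ with its value at the competitor $y=x^k$ gives
\[
H_{\lambda,\delta}(x^{k+1})+\frac{1}{2\lambda\delta}\|x^{k+1}-x^k\|^2\le H_{\lambda,\delta}(x^k).
\]
This is exactly the descent inequality.

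The one point to check is that the identity $T_{\lambda,\delta}=\prox_{\lambda\delta H_{\lambda,\delta}}$ from Gribonval's result means that $T_{\lambda,\delta}(x)$ is the global minimizer, not merely a critical point. Gribonval's theorem does state global minimality, and the proof of the bounded-below lemma already uses exactly this inequality. So I will simply invoke it.

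\textbf{(ii).} Sum the descent inequality from $k=0$ to $K$ and telescope:
\[
\frac{1}{2\lambda\delta}\sum_{k=0}^{K}\|x^{k+1}-x^k\|^2\le H_{\lambda,\delta}(x^0)-H_{\lambda,\delta}(x^{K+1}).
\]
By the preceding lemma, $H_{\lambda,\delta}(x^{K+1})\ge -\log C_\delta+\frac d2\log(2\pi\lambda\delta)$. The right-hand side is therefore bounded uniformly in $K$ by
\[
H_{\lambda,\delta}(x^0)+\log C_\delta-\frac d2\log(2\pi\lambda\delta).
\]
This quantity is finite because $H_{\lambda,\delta}$ is real-valued, being defined everywhere via $T_{\lambda,\delta}^{-1}$, which exists on all of $\R^d$ by Proposition~\ref{prop:zopo_surjective}.

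It follows that $\sum_k\|x^{k+1}-x^k\|^2<\infty$, and hence $\|x^{k+1}-x^k\|\to0$.

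Finally, by \eqref{eq:zprox_softenvelope_app}, $x^{k+1}-x^k=-\lambda\nabla f^{\lambda,\delta}(x^k)$. So this also yields $\|\nabla f^{\lambda,\delta}(x^k)\|\to0$, which is the statement used later in the paper.
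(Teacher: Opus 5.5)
Your proof is correct and follows essentially the same route as the paper: (i) comes directly from the representation $\zprox^{\delta}_{\lambda,f}=\prox_{\lambda\delta H_{\lambda,\delta}}$, using global minimality as in Gribonval's theorem, and (ii) comes from telescoping the descent inequality and applying the lower bound on $H_{\lambda,\delta}$ from the preceding lemma. You also note explicitly that $H_{\lambda,\delta}(x^0)$ is finite because $\zprox^{\delta}_{\lambda,f}$ is surjective; the paper leaves this detail implicit.
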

\begin{proof}
    This can be seen as a specialization of the convergence result in \cite[Theorem~1]{xu-2020-provable} to the denoising-only iteration  (i.e.\ the case $g\equiv 0$, in the notation of \cite{xu-2020-provable}). By \eqref{eq:Zprox_is_prox}, we have immediately \emph{(i)}. By summing \emph{(i)} from $0$ to $K$ we obtain
    \[\sum_{k=1}^{K} \frac{1}{2\lambda \delta}\|x^{k+1}-x^k\|^2 \leq H_{\lambda,\delta} (x^0) - H_{\lambda,\delta}(x^{K+1}).\]
    Using the previous lemma, we obtain
    \[\sum_{k=1}^{K} \frac{1}{2\lambda \delta}\|x^{k+1}-x^k\|^2 \leq H_{\lambda,\delta} (x^0) - \inf H_{\lambda,\delta} <+\infty,\]
    and $\|x^{k+1}-x^k\|^2$ is summable and thus \emph{(ii)} holds.
\end{proof}

\subsection{Proof of Theorem~\ref{thm:zoppa_convergence}}\label{app:proof_of_zoppa_convergence}

Given the form of $H^{\lambda, \delta}$ defined in \eqref{eq:H_def_app}, we can derive descent also for the soft envelope.

\begin{theorem}\label{thm:descent_on_f}
Let $f$ satifying Assumption~\ref{ass:on_f}. Then, the sequence $(x^k)_{k\in\mathbb{N}}$ generated by
Algorithm \ref{alg:zoppa} satisfies
\begin{enumerate}
\item[(i)] (\emph{Descent}) $f^{\lambda,\delta}(x^{k})\le f^{\lambda,\delta}(x^{k-1}) - \frac{1}{2\lambda}\|x^{k}-x^{k-1}\|^2$ for all $k\ge 1$.
\item[(ii)] (\emph{Asymptotic stationarity}) $\|\nabla f^{\lambda, \delta}(x^k)\|=\frac{\|x^{k+1}-x^k\|}{\lambda}\to 0 , \qquad k\to\infty$.
\end{enumerate}
\end{theorem}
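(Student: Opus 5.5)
Part (i) comes from rewriting the descent inequality of Proposition~\ref{thm:zprox_convergence}(i) in terms of $f^{\lambda,\delta}$, using the explicit form \eqref{eq:H_def_app} of $H_{\lambda,\delta}$ and the identity $T_{\lambda,\delta}^{-1}(x^{k})=x^{k-1}$. This identity holds because $T_{\lambda,\delta}=\zprox^\delta_{\lambda,f}$ is a bijection (Proposition~\ref{prop:zopo_surjective}) and $x^{k}=T_{\lambda,\delta}(x^{k-1})$ for every $k\ge1$. Substituting into \eqref{eq:H_def_app} gives, for every $k\ge 1$,
\[
H_{\lambda,\delta}(x^k) = -\frac{1}{2\lambda\delta}\|x^{k-1}-x^k\|^2+\frac{1}{\delta}f^{\lambda,\delta}(x^{k-1}).
\]
So along the iterates $H_{\lambda,\delta}$ is expressed through the soft Moreau envelope evaluated one step earlier.

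Next we apply the proximal descent inequality $H_{\lambda,\delta}(x^{k+1})\le H_{\lambda,\delta}(x^k)-\frac{1}{2\lambda\delta}\|x^{k+1}-x^k\|^2$ with $k\ge1$. Inserting the expression above on both sides gives
\[
-\frac{1}{2\lambda\delta}\|x^k-x^{k+1}\|^2+\frac1\delta f^{\lambda,\delta}(x^k)\le -\frac{1}{2\lambda\delta}\|x^{k-1}-x^k\|^2+\frac1\delta f^{\lambda,\delta}(x^{k-1})-\frac{1}{2\lambda\delta}\|x^{k+1}-x^k\|^2 .
\]
The $\|x^{k+1}-x^k\|^2$ terms cancel. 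Multiplying by $\delta$ then gives $f^{\lambda,\delta}(x^k)\le f^{\lambda,\delta}(x^{k-1})-\frac{1}{2\lambda}\|x^k-x^{k-1}\|^2$ for all $k\ge1$, which is (i).

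An equivalent and more self-contained route to (i) is to use the prox characterization directly. Since $x^k=\prox_{\lambda\delta H_{\lambda,\delta}}(x^{k-1})$, for all $z$ we have
\[
H_{\lambda,\delta}(x^k)+\frac{1}{2\lambda\delta}\|x^k-x^{k-1}\|^2\le H_{\lambda,\delta}(z)+\frac{1}{2\lambda\delta}\|z-x^{k-1}\|^2 .
\]
The left-hand side equals $\frac1\delta f^{\lambda,\delta}(x^{k-1})$. Choosing $z=x^{k+1}$ and again using the formula for $H_{\lambda,\delta}(x^{k+1})$ yields the same inequality after the same cancellation. The only delicate point, and the step I would check most carefully, is the index bookkeeping: the statement starts at $k\ge1$ precisely because $H_{\lambda,\delta}(x^0)$ involves $T_{\lambda,\delta}^{-1}(x^0)$, which is not an iterate.

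For (ii), the gradient formula \eqref{eq:zprox_softenvelope_app} (valid by Remark~\ref{rem:differentiation}, with $f$ bounded below by A1) gives $x^{k+1}=T_{\lambda,\delta}(x^k)=x^k-\lambda\nabla f^{\lambda,\delta}(x^k)$, hence $\|\nabla f^{\lambda,\delta}(x^k)\|=\|x^{k+1}-x^k\|/\lambda$. Proposition~\ref{thm:zprox_convergence}(ii) then gives $\|x^{k+1}-x^k\|\to0$. Alternatively, summing (i) telescopically and using that $f^{\lambda,\delta}\ge -\delta\log C_\delta+\frac{\delta d}{2}\log(2\pi\lambda\delta)$ (shown in the bounded-below lemma) shows that $\sum_k\|x^k-x^{k-1}\|^2<\infty$, so the increments vanish.
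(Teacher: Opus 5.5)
Your main argument for (i) is exactly the paper's proof: substitute $H_{\lambda,\delta}(x^k)=-\frac{1}{2\lambda\delta}\|x^{k-1}-x^k\|^2+\frac1\delta f^{\lambda,\delta}(x^{k-1})$ into the proximal descent inequality and cancel the $\|x^{k+1}-x^k\|^2$ terms. Your treatment of (ii), via $x^{k+1}=x^k-\lambda\nabla f^{\lambda,\delta}(x^k)$ and Proposition~\ref{thm:zprox_convergence}(ii), spells out what the paper leaves implicit.

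Your ``equivalent, more self-contained route'' is wrong as written. Testing the minimality of $x^k=\prox_{\lambda\delta H_{\lambda,\delta}}(x^{k-1})$ at $z=x^{k+1}$ gives
\[
\tfrac1\delta f^{\lambda,\delta}(x^{k-1})\le \tfrac1\delta f^{\lambda,\delta}(x^{k})+\tfrac{1}{2\lambda\delta}\bigl(\|x^{k+1}-x^{k-1}\|^2-\|x^{k+1}-x^{k}\|^2\bigr).
\]
This is an upper bound on the decrease, so it points the wrong way, and no cancellation turns it into (i). The correct version tests the minimality of $x^{k+1}=\prox_{\lambda\delta H_{\lambda,\delta}}(x^k)$ at $z=x^k$:
\[
\tfrac1\delta f^{\lambda,\delta}(x^k)=H_{\lambda,\delta}(x^{k+1})+\tfrac{1}{2\lambda\delta}\|x^{k+1}-x^k\|^2\le H_{\lambda,\delta}(x^k)=\tfrac1\delta f^{\lambda,\delta}(x^{k-1})-\tfrac{1}{2\lambda\delta}\|x^k-x^{k-1}\|^2 .
\]
This gives (i) directly, with no cancellation needed. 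It just says that $\frac1\delta f^{\lambda,\delta}$ is the Moreau envelope of $H_{\lambda,\delta}$ with parameter $\lambda\delta$ and lies below $H_{\lambda,\delta}$. Your primary argument does not depend on this aside, so the proof as a whole stands.
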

\begin{proof}
    From Theorem~\ref{thm:zprox_convergence} we have that for all $k\geq 1$
    \[H_{\lambda,\delta}(x^{k+1}) \leq H_{\lambda,\delta}(x^k) - \frac{1}{2\lambda\delta}\|x^{k+1}-x^k\|^2.\]
    On the other hand, for $k\geq 1$, we have $x^k= T_{\lambda,\delta}(x^{k-1})$ and by definition of $H_{\lambda,\delta}$,
    \[H_{\lambda,\delta}(x^{k})=-\frac{1}{2\lambda \delta} \|x^{k-1}-x^k\|^2 + \frac{1}{\delta}f^{\lambda, \delta}(x^{k-1}).\]
    Therefore, 
    \[\frac{1}{\delta}f^{\lambda,\delta}(x^{k}) -\frac{1}{2\lambda \delta}\|x^k-x^{k+1}\|^2 \leq \frac{1}{\delta} f^{\lambda,\delta}(x^{k-1}) -\frac{1}{2\lambda \delta}\|x^{k-1}-x^{k}\|^2-\frac{1}{2\lambda \delta}\|x^k-x^{k+1}\|^2,\]
    and the thesis.
\end{proof}

\begin{remark}[Stationary points of $f^{\lambda,\delta}$and of $Z_{\lambda,\delta}$]
From \eqref{eq:zprox_softenvelope_app} it is immediate to see that a point $x^\star$ is a fixed point of the zeroth order prox operator,
$x^\star=\zprox_{\lambda, f}^{\delta}(x^\star)$, if and only if
\begin{equation}
\nabla f^{\lambda,\delta}(x^\star)=0.
\label{eq:fixed_point_stationary}
\end{equation}
Moreover, since $Z_{\lambda,\delta}(x)>0$, we have $\nabla f^{\lambda,\delta}(x)=0 \iff \nabla \log Z_{\lambda,\delta}(x)=0 \iff \nabla Z_{\lambda, \delta}(x)=0$.
Thus, fixed points are precisely stationary points of $f^{\lambda,\delta}$ (equivalently, stationary
points of $Z_{\lambda,\delta}$).
\end{remark}

With the following result we provide convergence of the iterates and the rate on the residuals.

\begin{theorem}[Convergence of the iterates]
Let $f:\R^d\to \R$ satisfy Assumption~\ref{ass:on_f}. Then $f^{\lambda,\delta}$ admits at least a minimizer, satisfies the KL property and the sequence generated by Algorithm \ref{alg:zoppa} satisfies
\[\sum_k \|x^{k+1}-x^k\| < +\infty.\]
In particular  $\{x^k\}_k$  converges to some $x_{\lambda,\delta}^\star$ such that $x_{\lambda,\delta}^\star=\zprox_{\lambda, f}^{\delta}(x_{\lambda,\delta}^\star)$, i.e.,
\[x^k \to x_{\lambda,\delta}^\star, \quad \text{with } \nabla f^{\lambda,\delta}(x_{\lambda,\delta}^\star)=0,\]
and we have
\[\|\nabla f^{\lambda,\delta}(x^k)\|\to 0, \quad \text{with } \sum_k \|\nabla f^{\lambda,\delta}(x^k)\|< +\infty.\]
\end{theorem}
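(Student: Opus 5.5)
The plan follows Steps 2--4 of the sketch after Theorem~\ref{thm:rates}, treating ZOPPA as gradient descent on $f^{\lambda,\delta}$ with stepsize $\lambda$, by \eqref{eq:zprox_softenvelope_app}.

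\textbf{Analyticity and coercivity of $f^{\lambda,\delta}$.} Write $f^{\lambda,\delta}=-\delta\log u$ with
\[
u(x)=(2\pi\lambda\delta)^{-d/2}\int e^{-f(y)/\delta}e^{-\|x-y\|^2/(2\lambda\delta)}\,dy ,
\]
which is the heat semigroup at time $\lambda\delta$ (up to scaling) applied to $g=e^{-f/\delta}$.

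For analyticity, note that $g\in L^1$ by A2). The Gaussian kernel extends to $x\in\mathbb{C}^d$, and for $\|\operatorname{Im}x\|$ bounded the integrand is dominated by $C e^{\|\operatorname{Im}x\|^2/(2\lambda\delta)}g(y)$. So $u$ extends holomorphically and is real analytic. Since $u>0$, $f^{\lambda,\delta}$ is real analytic.

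For coercivity, observe that $g$ is continuous and positive, lies in $L^1$, and lies in $L^\infty$ because $f$ has a minimizer and so $g\le e^{-\min f/\delta}$. Hence $g\in L^1\cap L^\infty$. Splitting the integral over $\|y\|\le R$ and $\|y\|>R$ shows $u(x)\to 0$ as $\|x\|\to\infty$. On the first region the kernel tends to $0$ uniformly. On the second region the contribution is at most $(2\pi\lambda\delta)^{-d/2}\int_{\|y\|>R}g$, which is small for large $R$. Thus $f^{\lambda,\delta}\to+\infty$, so it is coercive and continuous, and it attains a minimum.

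\textbf{KL property and finite length.} Real analytic functions satisfy the \L ojasiewicz gradient inequality near every point \cite{Lojasiewicz1963}, which is the KL property.

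By Theorem~\ref{thm:descent_on_f}(i), $f^{\lambda,\delta}(x^k)$ is nonincreasing. By coercivity the iterates stay in the compact sublevel set $\{f^{\lambda,\delta}\le f^{\lambda,\delta}(x^0)\}$.

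Next I verify the hypotheses of \cite[Theorem 2.9]{attouch2013convergence}, applied to $f^{\lambda,\delta}$.
\begin{itemize}
\item Sufficient decrease: $f^{\lambda,\delta}(x^{k+1})+\frac{1}{2\lambda}\|x^{k+1}-x^k\|^2\le f^{\lambda,\delta}(x^k)$.
\item Relative error: $\nabla f^{\lambda,\delta}$ must be evaluated at $x^{k+1}$. We have $\|\nabla f^{\lambda,\delta}(x^{k+1})\|=\|x^{k+2}-x^{k+1}\|/\lambda$, which is not directly bounded by $\|x^{k+1}-x^k\|$.
\end{itemize}
This is the main obstacle. Smoothness of $f^{\lambda,\delta}$ settles it: on the compact set above, $\nabla f^{\lambda,\delta}$ is $L_K$-Lipschitz for some $L_K$. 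Then
\[
\|\nabla f^{\lambda,\delta}(x^{k+1})\|\le\|\nabla f^{\lambda,\delta}(x^k)\|+L_K\|x^{k+1}-x^k\|=(1/\lambda+L_K)\|x^{k+1}-x^k\|,
\]
using $\nabla f^{\lambda,\delta}(x^k)=(x^k-x^{k+1})/\lambda$.

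The continuity condition holds trivially. A bounded sequence has a cluster point, and the KL inequality holds at that cluster point. Therefore the abstract theorem gives $\sum_k\|x^{k+1}-x^k\|<\infty$, and $x^k\to x^\star_{\lambda,\delta}$.

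If checking the segment-based hypotheses of that theorem is delicate, I would instead run the standard argument directly. Take a uniformized KL neighborhood of the limit set, which is compact and on which $f^{\lambda,\delta}$ is constant. Then apply concavity of the desingularizer $\varphi$ to get
\[
2\|x^{k+1}-x^k\|\le\|x^k-x^{k-1}\|+C\bigl(\varphi(r_k)-\varphi(r_{k+1})\bigr),\qquad r_k:=f^{\lambda,\delta}(x^k)-f^\star,
\]
and sum over $k$.

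\textbf{Conclusion.} Since $\zprox^\delta_{\lambda,f}$ is continuous (Proposition~\ref{prop:zopo_surjective}), passing to the limit in $x^{k+1}=\zprox^\delta_{\lambda,f}(x^k)$ gives a fixed point. Equivalently, by \eqref{eq:fixed_point_stationary}, $\nabla f^{\lambda,\delta}(x^\star_{\lambda,\delta})=0$. Finally, $\sum_k\|\nabla f^{\lambda,\delta}(x^k)\|=\lambda^{-1}\sum_k\|x^{k+1}-x^k\|<\infty$, and the gradients tend to $0$.
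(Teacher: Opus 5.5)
Your proposal is correct and follows essentially the same route as the paper. Both arguments get analyticity and coercivity of $f^{\lambda,\delta}=-\delta\log u$ from the heat-kernel representation, then combine the {\L}ojasiewicz/KL property, the descent inequality of Theorem~\ref{thm:descent_on_f} and the abstract KL convergence theorem, and finally use continuity of $\zprox^\delta_{\lambda,f}$ to identify the limit as a fixed point. Your one real addition closes a small gap the paper leaves open: the paper pairs the descent estimate with $\|\nabla f^{\lambda,\delta}(x^k)\|=\|x^{k+1}-x^k\|/\lambda$, but the relative-error hypothesis needs the gradient at $x^{k+1}$, and your local-Lipschitz bound $\|\nabla f^{\lambda,\delta}(x^{k+1})\|\le(1/\lambda+L_K)\|x^{k+1}-x^k\|$ on the (convex hull of the) compact sublevel set supplies exactly this.
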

\begin{proof}
Since $f$ is bounded from below and $e^{-f/\delta}\in L^1(\mathbb R^d)$, the function $e^{-f/\delta}$ is positive and belongs to $L^1\cap L^\infty$. Hence its convolution with the gaussian heat-kernel
\[
u^\delta(\lambda,x):=\frac{1}{(2\pi \lambda \delta)^{d/2}}\int_{\mathbb R^d}
\exp\!\left(-\frac{f(y)}{\delta}\right)
\exp\!\left(-\frac{\|x-y\|^2}{2\lambda\delta}\right)\,dy
\]
is positive \cite[Section 2.3.1]{evans}, real analytic \cite[Section 2.3.3]{evans} (or more generally \cite[Theorem IX.13]{ReedSimon1975}), and vanishes at infinity \cite[Proposition 8.8]{folland1999real}. Therefore, since $f^{\lambda,\delta}(x)=
-\delta\log u^\delta(\lambda,x)$, we have
\(f^{\lambda,\delta}(x)\to+\infty\) as \(\|x\|\to+\infty\), so
\(f^{\lambda,\delta}\) is coercive. Since \(f^{\lambda,\delta}(x^k)\) is
decreasing, the sequence \(\{x^k\}_k\) is bounded. Notice that the previous argument proves also that $f^{\lambda,\delta}$ admits at least a minimizer. Since \(f^{\lambda,\delta}\) is real analytic, it satisfies the
{\L}ojasiewicz gradient inequality around each critical point
\cite{Lojasiewicz1963}; equivalently, it satisfies the KL
property, see also \cite{BolteDaniilidisLewis2007}. Combining the sufficient descent estimate of Theorem~\ref{thm:descent_on_f} (i) with
\[
\|\nabla f^{\lambda,\delta}(x^k)\|
=
\frac{\|x^{k+1}-x^k\|}{\lambda}
\]
the continuity of $f^{\lambda, \delta}$ and the KL inequality, the standard KL convergence theorem \cite[Theorem 2.9]{attouch2013convergence} yields the finite-length property $\sum_k \|x^{k+1}-x^k\| < +\infty$. Thus \(\{x^k\}_k\) is Cauchy and converges to some
\(x_{\lambda,\delta}^\star\). Finally, since the map $\zprox_{\lambda, f}^\delta$ is continuous, and from
\(x^{k+1}-x^k\to 0\) we obtain
$\zprox_{\lambda, f}^\delta(x_{\lambda,\delta}^\star)=x_{\lambda,\delta}^\star$ and equivalently $
\nabla f^{\lambda,\delta}(x_{\lambda,\delta}^\star)=0$.
\end{proof}

\subsection{Proof of Theorem~\ref{thm:rates}}\label{app:proof_of_rates}

In the following we use the Poincaré inequality for the measure $\mu_x$ defined by
\begin{equation}\label{eq:def_mux}
    d\mu_x:=\frac{1}{Z_{\lambda,\delta}(x)}\exp\!\left(-\frac{f(y)}{\delta}\right)\,
\exp\!\left(-\frac{\|y-x\|^2}{2\lambda\delta}\right)\,dy,
\end{equation}
which we define here (for a comprehensive treatment, see \cite[Section 4.2]{bakry2014analysis}).

\begin{definition}
    A probability measure $\mu$ satisfies a Poincaré inequality if there exists a constant $C_P > 0$ such that for every smooth, suitably integrable function $f$,
    \[\operatorname{Var}_{\mu}(f) = \int \left|f(y)-\int f \, d\mu\right|^2\, d\mu(y)\leq C_P \int \|\nabla f(y)\|^2 \, d\mu(y).\]
\end{definition}

Here we state a lemma which we will use several times later on.

\begin{lemma}[Covariance bound via Poincaré]\label{lem:covariance_poincare}
If $\mu_x$ in \eqref{eq:def_mux} satisfies a Poincaré inequality with constant $C_P(\delta, x) < \infty$, then its covariance matrix 
\[\Sigma_{\lambda,\delta}(x) := \int_{\mathbb{R}^d}
\big(y-\zprox_{\lambda,f}^\delta(x)\big)
\big(y-\zprox_{\lambda,f}^\delta(x)\big)^\top
\,d\mu_x(y)\]
is bounded as
\begin{equation}
\Sigma_{\lambda,\delta}(x) \preceq C_P(\delta, x) I.
\end{equation}
Consequently, the trace of the covariance matrix satisfies
\[\int_{\mathbb{R}^d}
\|y-\zprox_{\lambda,f}^\delta(x)\|^2
\,d\mu_x(y)\leq d \, C_P(\delta, x).\]
\end{lemma}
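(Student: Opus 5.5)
The plan is to apply the Poincaré inequality to linear test functions and then take the supremum over directions. The covariance of $\mu_x$ is finite by Remark~\ref{rem:differentiation}. Write $m := \zprox^\delta_{\lambda,f}(x) = \int y\,d\mu_x(y)$ for its mean.

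Fix a unit vector $v\in\R^d$ and set $g_v(y) := \langle v, y\rangle$. This function is smooth with constant gradient $\nabla g_v = v$. It is also square-integrable with respect to $\mu_x$, because $\mu_x$ has Gaussian tails and hence finite second moments, so it is an admissible test function. Since $\int g_v\,d\mu_x = \langle v, m\rangle$, we get
\[
\operatorname{Var}_{\mu_x}(g_v) = \int \langle v, y-m\rangle^2\,d\mu_x(y) = v^\top \Sigma_{\lambda,\delta}(x)\, v .
\]
The Poincaré inequality then gives
\[
v^\top \Sigma_{\lambda,\delta}(x) v \le C_P(\delta,x)\int \|v\|^2\,d\mu_x = C_P(\delta,x)\|v\|^2 .
\]
By homogeneity this holds for every $v\in\R^d$, which is exactly $\Sigma_{\lambda,\delta}(x)\preceq C_P(\delta,x) I$.

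For the trace bound, note that
\[
\int \|y-m\|^2\,d\mu_x = \operatorname{tr}\Sigma_{\lambda,\delta}(x) = \sum_{i=1}^d e_i^\top \Sigma_{\lambda,\delta}(x) e_i .
\]
Applying the previous bound with $v=e_i$ bounds each summand by $C_P$, so the sum is at most $d\,C_P(\delta,x)$.

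There is no substantive obstacle here. The only point needing care is admissibility: linear functions are unbounded, so if the Poincaré inequality is stated only for, say, smooth compactly supported functions, one must extend it. I would do this by truncation, approximating $g_v$ by $g_v\chi_R$ with smooth cutoffs $\chi_R$ whose gradients are bounded uniformly in $R$, and then passing to the limit $R\to\infty$ by dominated convergence. This is justified by the finite second moments of $\mu_x$ from Remark~\ref{rem:differentiation}.
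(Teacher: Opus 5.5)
Your proposal is correct and follows the same argument as the paper: apply the Poincaré inequality to the linear test function $y\mapsto\langle v,y\rangle$ to get $v^\top\Sigma_{\lambda,\delta}(x)v\le C_P(\delta,x)\|v\|^2$, then sum over the standard basis vectors for the trace bound. Your truncation remark on admissibility of unbounded linear test functions is a valid refinement that the paper leaves implicit.
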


\begin{proof}
First, recall that $\zprox_{\lambda,f}^\delta(x)=\E_{\mu_x}[y]$. By the definition of the Poincaré inequality, the variance of any sufficiently smooth function $g$ under $\mu_x$ is bounded by $C_P(\delta, x) \int \|\nabla g(y)\|^2 d\mu_x(y)$. Applying this to the linear projection $g(y) = u^\top y$ for any unit vector $u \in \mathbb{R}^d$ ($\|u\|=1$), we get $\nabla g(y) = u$. Thus, the variance in direction $u$ is $u^\top \Sigma_{\lambda,\delta}(x) u \leq C_P(\delta, x) \|u\|^2 = C_P(\delta, x)$, which proves the matrix inequality. Summing the variances over the $d$ standard basis vectors yields the trace bound.
\end{proof}

\begin{definition}[$L$-smooth function]
    A function $f:\R^d\to \R$ is said to be $L$-smooth if it is $C^1$ and $\nabla f$ is $L$-Lipschitz, i.e. it satisfies
    \[\forall x, y \in \R^d, \quad \|\nabla f(x) - \nabla f(y)\|\leq L||x-y||.\]
\end{definition}

\begin{theorem}\label{thm:quasi_min_general}
Let $f:\mathbb{R}^d\to \mathbb{R}$ be $L$-smooth. Let $\lambda, \delta > 0$, and assume that for any given $x \in \mathbb{R}^d$, the localized Gibbs measure $\mu_x$ defined by the density
$$ \mu_x(y) \propto \exp\left(-\frac{f(y) + \frac{1}{2\lambda}\|x-y\|^2}{\delta}\right) $$
satisfies a Poincaré inequality with constant $C_P(\delta, x) < \infty$. Then for all $x\in \R^d$ it holds
$$ \|\nabla f^{\lambda,\delta}(x)-\nabla f(x)\| \leq L\lambda \|\nabla f^{\lambda,\delta}(x)\| + L\sqrt{d\,C_P(\delta, x)} . $$
In particular, the sequence $\{x^k\}_k$ generated by Algorithm \ref{alg:zoppa} satisfies
$$ \|\nabla f(x^k)\| \leq (1+L\lambda)\|\nabla f^{\lambda, \delta}(x^k)\|+L\sqrt{d \, C_P(\delta, x^k)}.$$
\end{theorem}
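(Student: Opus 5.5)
The plan is to express $\nabla f^{\lambda,\delta}(x)$ as an average of $\nabla f$ under the Gibbs measure $\mu_x$. Then the gap to $\nabla f(x)$ splits into a deterministic displacement term and a fluctuation term, and the Lipschitz property of $\nabla f$ together with Lemma~\ref{lem:covariance_poincare} controls the fluctuation.

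\textbf{Step 1: an integral identity for $\nabla f^{\lambda,\delta}$.} Write
\[
Z_{\lambda,\delta}(x)=\int \exp\!\big(-f(y)/\delta\big)\exp\!\big(-\|x-y\|^2/(2\lambda\delta)\big)\,dy .
\]
Differentiating in $x$ gives
\[
\nabla f^{\lambda,\delta}(x)=\frac{1}{\lambda}\big(x-\E_{\mu_x}[y]\big),
\]
which is \eqref{eq:zprox_softenvelope_app}. Next, substitute $y=x+z$, so that $Z_{\lambda,\delta}(x)=\int \exp(-f(x+z)/\delta)\exp(-\|z\|^2/(2\lambda\delta))\,dz$, and differentiate again in $x$. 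This yields
\[
\nabla f^{\lambda,\delta}(x)=\E_{\mu_x}\big[\nabla f(y)\big].
\]
Equivalently, integrate by parts: $\int \nabla_y\big(e^{-\varphi_x(y)/\delta}\big)\,dy=0$ gives $\E_{\mu_x}[\nabla f(y)]=\E_{\mu_x}[(x-y)/\lambda]$. The interchange of derivative and integral, and the vanishing boundary terms, are justified by Remark~\ref{rem:differentiation}. Since $f$ is $L$-smooth, $\|\nabla f(y)\|$ grows at most linearly, so all integrands are dominated by a polynomial times a Gaussian.

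\textbf{Step 2: the splitting.} Set $m=\zprox^\delta_{\lambda,f}(x)=\E_{\mu_x}[y]$. Then
\[
\nabla f^{\lambda,\delta}(x)-\nabla f(x)=\E_{\mu_x}\big[\nabla f(y)-\nabla f(m)\big]+\big(\nabla f(m)-\nabla f(x)\big).
\]
For the second term, $L$-smoothness gives $\|\nabla f(m)-\nabla f(x)\|\le L\|m-x\|=L\lambda\|\nabla f^{\lambda,\delta}(x)\|$. For the first term, Jensen and Lipschitzness give
\[
\big\|\E_{\mu_x}[\nabla f(y)-\nabla f(m)]\big\|\le L\,\E_{\mu_x}\|y-m\|\le L\big(\E_{\mu_x}\|y-m\|^2\big)^{1/2}\le L\sqrt{d\,C_P(\delta,x)},
\]
where the last inequality is the trace bound of Lemma~\ref{lem:covariance_poincare}. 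Summing the two bounds gives the first claim.

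\textbf{Step 3: the statement along the iterates.} Apply the result at $x=x^k$ and use the triangle inequality
\[
\|\nabla f(x^k)\|\le \|\nabla f^{\lambda,\delta}(x^k)\|+\|\nabla f^{\lambda,\delta}(x^k)-\nabla f(x^k)\|.
\]
This gives $\|\nabla f(x^k)\|\le(1+L\lambda)\|\nabla f^{\lambda,\delta}(x^k)\|+L\sqrt{d\,C_P(\delta,x^k)}$.

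\textbf{Main obstacle.} The only delicate point is justifying the identity $\nabla f^{\lambda,\delta}(x)=\E_{\mu_x}[\nabla f]$. This needs vanishing boundary terms in the integration by parts, or differentiation under the integral after the change of variables. Assumption~\ref{ass:on_f} gives the lower bound on $f$ and integrability, and the Gaussian factor dominates the linear growth of $\nabla f$. Everything else is a direct application of Lemma~\ref{lem:covariance_poincare}.
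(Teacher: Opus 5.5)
Your proposal is correct and follows essentially the paper's argument: the identity $\nabla f^{\lambda,\delta}(x)=\E_{\mu_x}[\nabla f(y)]$, then Jensen with the Lipschitz continuity of $\nabla f$, then Cauchy--Schwarz, then the trace bound of Lemma~\ref{lem:covariance_poincare}. The only difference is how the displacement term is separated out. The paper centers at $x$, uses the exact decomposition $\E_{\mu_x}\|y-x\|^2=\E_{\mu_x}\|y-m\|^2+\lambda^2\|\nabla f^{\lambda,\delta}(x)\|^2$, and then applies $\sqrt{a+b}\le\sqrt{a}+\sqrt{b}$; this passes through the slightly sharper intermediate bound $L\sqrt{d\,C_P(\delta,x)+\lambda^2\|\nabla f^{\lambda,\delta}(x)\|^2}$. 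You instead insert $\nabla f(m)$ and use the triangle inequality at the level of gradients.
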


\begin{proof}
Since $\nabla f$ is $L$-Lipschitz, we have $\|\nabla f(y)\|e^{-f(y)/\delta}\leq C e^{-\min f/\delta}(1+\|y\|)$, for some $C>0$. Thus, we can differentiate under the integral, and the gradient of the soft envelope can be expressed as the expected gradient of the original function under this measure:
$$ \nabla f^{\lambda,\delta}(x) = \int_{\mathbb{R}^d} \nabla f(y) \, d\mu_x(y) . $$
Therefore, we can write the difference between the gradients as
$$ \nabla f^{\lambda,\delta}(x) - \nabla f(x) = \int_{\mathbb{R}^d} \big(\nabla f(y) - \nabla f(x)\big) \, d\mu_x(y) . $$
Taking the norm and applying Jensen's inequality to move the norm inside the integral, we obtain
$$ \|\nabla f^{\lambda,\delta}(x) - \nabla f(x)\| \leq \int_{\mathbb{R}^d} \|\nabla f(y) - \nabla f(x)\| \, d\mu_x(y) \leq L \int_{\mathbb{R}^d} \|y-x\| \, d\mu_x(y) , $$
by the assumption that $f$ is $L$-smooth. Applying the Cauchy-Schwarz inequality gives
$$ \|\nabla f^{\lambda,\delta}(x) - \nabla f(x)\| \leq L \left( \int_{\mathbb{R}^d} \|y-x\|^2 \, d\mu_x(y) \right)^{1/2} . $$
To evaluate this integral, we decompose the squared distance using the mean of the measure. Recall that $\zprox_{\lambda, f}^\delta(x) = x - \lambda \nabla f^{\lambda,\delta}(x) = \int y \, d\mu_x(y)$. Thus
$$ \int_{\mathbb{R}^d} \|y-x\|^2 \, d\mu_x(y) = \int_{\mathbb{R}^d} \|y - \zprox_{\lambda, f}^\delta(x)\|^2 \, d\mu_x(y) + \lambda^2\|\nabla f^{\lambda,\delta}(x)\|^2 , $$
since the cross term vanishes. The first term is the trace of the covariance matrix of $\mu_x$. By the assumption that $\mu_x$ satisfies a Poincaré inequality with constant $C_P(\delta, x)$, the variance of any coordinate projection under $\mu_x$ is bounded by $C_P(\delta, x)$. Summing over the $d$ dimensions, the trace of the covariance matrix is strictly bounded by $d C_P(\delta, x)$. Substituting this term back into our bound yields
$$ \|\nabla f^{\lambda,\delta}(x) - \nabla f(x)\| \leq L \left( d C_P(\delta, x) + \lambda^2 \|\nabla f^{\lambda,\delta}(x)\|^2 \right)^{1/2} . $$
Using the subadditivity of the square root, we obtain the global bound
$$ \|\nabla f^{\lambda,\delta}(x) - \nabla f(x)\| \leq L\lambda \|\nabla f^{\lambda,\delta}(x)\| + L\sqrt{d C_P(\delta, x)} . $$
The second part follows immediately.
\end{proof}

\begin{proposition}[Some possible bounds for the Poincaré constant]\label{prop:bounds_on_poincare}
Let $f:\R^d\to \R$ satisfy Assumption~\ref{ass:on_f}. Then, 
    \begin{enumerate}
        \item[(i)] if $f$ is $L$-smooth and $\lambda < 1/L$, for any $x\in \R^d$ it holds
        \[C_P(\delta, x) \leq \frac{\lambda\delta}{1-\lambda L}.\]
        \item[(ii)] if $f$ is $G$-Lipschitz, for any $\lambda >0$ and $x\in \R^d$ it holds
\[
C_P(\delta,x)\le
\min\left\{
2\lambda\delta \exp\!\Bigl(4\sqrt{2d/\pi}\,G^2\lambda^2\Bigr),\;
\frac12\Bigl(2G\lambda+\sqrt{8\lambda\delta}\Bigr)^2
\exp\!\Bigl(\frac{G^2\lambda}{2\delta}\Bigr)
\right\},
\]
\item[(iii)] if $f(y)=\frac{\kappa}{2}\|y\|^2+V(y)$ with $\operatorname{osc}(V)= \sup V - \inf V <+\infty$, for any $\lambda>0$ and $x\in \R^d$ it holds
\[
C_P(\delta,x)\leq e^{\osc(V)/\delta}
\frac{\lambda\delta}{1+\kappa\lambda}.
\]
    \end{enumerate}
\end{proposition}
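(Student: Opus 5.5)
We treat the three items separately. In each case we use that $\mu_x$ has density proportional to $e^{-U_x}$ with potential
\[
U_x(y)=\frac{1}{\delta}\Big(f(y)+\frac{1}{2\lambda}\|y-x\|^2\Big).
\]
We then invoke one of two classical tools. The first is the Bakry--\'Emery criterion: if $U$ is $\alpha$-strongly convex, then $C_P\le 1/\alpha$. By Brascamp--Lieb, or by approximating with $C^2$ potentials, this also holds when $U$ is only $C^1$ and strongly convex. The second is the Holley--Stroock perturbation lemma: if $\tilde U=U+W$ with $W$ bounded, then $C_P(e^{-\tilde U})\le e^{\operatorname{osc}(W)}C_P(e^{-U})$.

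\emph{Item (i).} Since $\nabla f$ is $L$-Lipschitz, $f+\frac L2\|\cdot\|^2$ is convex. Hence $y\mapsto f(y)+\frac1{2\lambda}\|y-x\|^2$ is $(\frac1\lambda-L)$-strongly convex, which is a positive modulus because $\lambda<1/L$. So $U_x$ is $\frac{1-\lambda L}{\lambda\delta}$-strongly convex, and Bakry--\'Emery gives $C_P(\delta,x)\le \frac{\lambda\delta}{1-\lambda L}$.

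\emph{Item (iii).} We split $U_x=U_0+W$ with
\[
U_0(y)=\frac1\delta\Big(\frac\kappa2\|y\|^2+\frac1{2\lambda}\|y-x\|^2\Big),\qquad W=\frac{V}{\delta}.
\]
The quadratic part $U_0$ is $(\kappa+\frac1\lambda)/\delta$-strongly convex; in fact $e^{-U_0}$ is a Gaussian with covariance $\frac{\lambda\delta}{1+\kappa\lambda}I$. Its Poincaré constant is therefore $\frac{\lambda\delta}{1+\kappa\lambda}$. Since $\operatorname{osc}(W)=\operatorname{osc}(V)/\delta$, Holley--Stroock yields the factor $e^{\operatorname{osc}(V)/\delta}$, and item (iii) follows directly.

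\emph{Item (ii).} Here $\mu_x$ is the Gaussian $\gamma_x=\mathcal N(x,\lambda\delta I)$, whose Poincaré constant is $\lambda\delta$, perturbed by $e^{-f/\delta}$. The perturbing potential $f/\delta$ is $(G/\delta)$-Lipschitz but unbounded, so Holley--Stroock does not apply directly. We prove the two bounds inside the minimum separately.

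For the first bound we invoke a known Lipschitz-perturbation result for strongly log-concave measures: if $\gamma\propto e^{-U}$ with $U$ $\alpha$-strongly convex and $W$ is $\ell$-Lipschitz, then $C_P(e^{-U-W})\le \frac2\alpha\exp\big(4\sqrt{2d/\pi}\,\ell^2/\alpha^2\big)$. Results of this type appear in the literature on Gaussian convolutions and Lipschitz perturbations (Bardet--Gozlan--Malrieu--Zitt, Cattiaux--Guillin). We apply it with $\alpha=1/(\lambda\delta)$ and $\ell=G/\delta$, so that $\ell^2/\alpha^2=G^2\lambda^2$. The main task is to pin down the exact constants of this perturbation lemma.

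For the second bound we use a radial comparison. Writing $r=\|y-x\|$, the Lipschitz property gives $|f(y)-f(x)|\le Gr$. We compare $\mu_x$, via Holley--Stroock, with a reference measure of radial potential $\psi(r)=\frac1\delta\big(\frac{r^2}{2\lambda}-Gr\big)$, suitably convexified or truncated near the origin. The residual $\frac1\delta(f(y)-f(x)+Gr)$ lies in $[0,2Gr/\delta]$, so it is not bounded. The plan is therefore to control it only where the Gaussian mass lives and to handle the tail with a Lyapunov-type or localization argument. The oscillation we expect to pay is $\max_r\big(Gr-\frac{r^2}{2\lambda}\big)/\delta=\frac{G^2\lambda}{2\delta}$, which matches the exponential factor in the statement. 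The remaining constant $\frac12\big(2G\lambda+\sqrt{8\lambda\delta}\big)^2$ should come from a Poincaré bound for a measure concentrated at radius about $G\lambda$ with Gaussian spread $\sqrt{\lambda\delta}$. Such a bound can be obtained, for instance, by estimating the variance of a measure supported essentially in a ball of radius $2G\lambda+\sqrt{8\lambda\delta}$ around $x$, together with a Gaussian tail estimate.

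We expect this second bound of item (ii) to be the main obstacle: choosing the right reference measure so that the oscillation is exactly $G^2\lambda/(2\delta)$ while its Poincaré constant stays explicit. By contrast, items (i) and (iii) are direct applications of standard criteria.
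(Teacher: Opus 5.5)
Your items (i) and (iii) follow the paper's proof exactly. For (i) you use Bakry--\'Emery for the $(\frac1\lambda-L)$-strongly convex $\varphi_x$. For (iii) you use Bakry--\'Emery for the Gaussian part, then Holley--Stroock with oscillation $\operatorname{osc}(V)/\delta$.

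The gap is in item (ii). The paper constructs nothing there. It applies the Lipschitz-perturbation result \cite[Theorem~1.3(1),(2)]{cattiaux2022perturbed} to $\mathcal N(x,\lambda\delta I)$, which is $\rho$-strongly log-concave with $\rho=(\lambda\delta)^{-1}$, perturbed by the $\ell$-Lipschitz potential $f/\delta$ with $\ell=G/\delta$. In these variables the second branch is literally $\frac12\bigl(\frac{2\ell}{\rho}+\sqrt{8/\rho}\bigr)^2e^{\ell^2/(2\rho)}$. So the missing ingredient is such a Lipschitz-perturbation theorem, and your radial comparison does not replace it:
\begin{itemize}
\item Holley--Stroock needs a bounded perturbation, and your plan never exhibits one. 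The residual is unbounded, and so is $\frac{r^2}{2\lambda\delta}-\psi=\frac{Gr}{\delta}$.
\item The reference potential $\psi$ is itself not log-concave; for $d=1$ it is a double well with barrier exactly $\frac{G^2\lambda}{2\delta}$. Its Poincar\'e constant is therefore the whole difficulty, not something you can read off. The quantity $\frac{G^2\lambda}{2\delta}$ you propose to ``pay'' is the depth of that well, not the oscillation of any perturbation.
\item Bounding the variance of a measure concentrated in a ball controls only linear test functions, not $C_P$.
\end{itemize}

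For the first branch, the lemma you state, $C_P\le\frac2\alpha\exp\bigl(4\sqrt{2d/\pi}\,\ell^2/\alpha^2\bigr)$, is false, so no bookkeeping of constants will produce it. Under the rescaling $y\mapsto sy$, $C_P$ is multiplied by $s^2$ while $(\alpha,\ell)\mapsto(\alpha/s^2,\ell/s)$, so $\ell^2/\alpha^2\mapsto s^2\ell^2/\alpha^2$. Letting $s\to0$, the lemma would force $C_P\le 2/\alpha$ for every Lipschitz perturbation.

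Concretely, on $\mathbb{R}$ the measure $\propto e^{-\alpha y^2/2+\ell|y|}$ has variance at least $\ell^2/\alpha^2$, since each half is $\mathcal N(\pm\ell/\alpha,1/\alpha)$ truncated at $0$. For $\alpha=\ell=100$ this gives $C_P\ge1$, while your lemma gives $0.02\,e^{4\sqrt{2/\pi}}<0.5$.

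The paper's first branch is exactly this expression with $\rho=(\lambda\delta)^{-1}$ and $\ell=G/\delta$, and it fails the same way. Take $d=1$, $f(y)=G\,\bigl||y|-G\lambda\bigr|$ (admissible under Assumption~\ref{ass:on_f}), $x=0$, $G=1$, $\lambda=10^{-2}$, $\delta=10^{-6}$. Then $\mu_0$ concentrates near $\pm10^{-2}$, so $C_P(\delta,0)\ge\operatorname{Var}_{\mu_0}\approx10^{-4}$. The first branch instead claims roughly $2\cdot10^{-8}$.

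A scale-invariant exponent such as $\ell^2/\rho$, as in the second branch, would give $G^2\lambda/\delta$. The $G^2\lambda^2$ in the statement is presumably a substitution slip, and that branch cannot be proved as written.
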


\begin{proof}
    (i) Because $f$ is $L$-smooth, it is $L$-weakly convex. Thus, the function $\varphi_x(y) := f(y) + \frac{1}{2\lambda}\|x-y\|^2$ is strongly convex with parameter $\rho = \frac{1}{\lambda} - L = \frac{1-\lambda L}{\lambda} > 0$. By the Bakry-Émery criterion \cite[Proposition 4.8.1 and Corollary 4.8.2]{bakry2014analysis} (see also the simpler \cite{chewi2024log}), the measure $\mu_x$ satisfies a Poincaré inequality with constant $C_P(\delta, x) \leq \frac{\delta}{\rho} = \frac{\lambda\delta}{1-\lambda L}$.\\
    (ii) The term $\frac{1}{2\lambda\delta}\|x-y\|^2$ is $(\lambda\delta)^{-1}$-strongly convex.
Since $f$ is $G$-Lipschitz, $f/\delta$ is
$(G/\delta)$-Lipschitz and
applying \cite[Theorem 1.3(1) and (2)]{cattiaux2022perturbed} to the Gaussian measure with density $\propto e^{-\frac{1}{2\lambda\delta}\|x-y\|^2}$ and perturbed by the Lipschitz function $f/\delta$ yields the bound
\[
C_P(\delta,x)\le
\min\left\{
2\lambda\delta \exp\!\Bigl(4\sqrt{2d/\pi}\,G^2\lambda^2\Bigr),\;
\frac12\Bigl(2G\lambda+\sqrt{8\lambda\delta}\Bigr)^2
\exp\!\Bigl(\frac{G^2\lambda}{2\delta}\Bigr)
\right\},
\]
for all $x$.\\
(iii) We can decompose the measure $\mu_x$ into a strongly log-concave base measure perturbed by a bounded function. Let the base measure be $\gamma_x(dy) \propto \exp\left(-\frac{1}{\delta}\big[\frac{\kappa}{2}\|y\|^2 + \frac{1}{2\lambda}\|y-x\|^2\big]\right) dy$. The effective potential of $\gamma_x$ is strongly convex with parameter $\rho = \frac{1}{\delta}(\kappa + \frac{1}{\lambda})$. By the Bakry-Émery criterion \cite[Proposition 4.8.1 and Corollary 4.8.2]{bakry2014analysis}, $\gamma_x$ satisfies a Poincaré inequality with constant $C_P(\gamma_x) \le 1/\rho = \frac{\lambda \delta}{1+\kappa\lambda}$. The target measure $\mu_x$ is obtained by perturbing $\gamma_x$ with the bounded density $\exp(-V(y)/\delta)$. By the Holley-Stroock perturbation theorem \cite{holley1987} (see also \cite[Theorem 1.1]{cattiaux2022perturbed}), $\mu_x$ satisfies a Poincaré inequality with a constant bounded by the oscillation of the perturbation
\[
C_P(\delta, x) \le e^{\osc(V) / \delta} C_P(\gamma_x) \le \frac{\lambda \delta}{1+\kappa\lambda} e^{\osc(V) / \delta}.
\]
\end{proof}

\begin{corollary}[Theorem~\ref{thm:rates}]\label{cor:quasi_min_small_lambda}
    Let $f:\R^d\to \R$ satisfy Assumption~\ref{ass:on_f} and $L$-smooth. Then, 
    \begin{enumerate}
        \item[(i)] if $\lambda < 1/L$, it holds
        \[\|\nabla f(x^k)\| \leq (1+\lambda L)\| \nabla f^{\lambda,\delta}(x^k)\| + L \sqrt{\frac{d \lambda \delta}{1-\lambda L}}.\]
        \item[(ii)] if $f$ is also $G$-Lipschitz, for any $\lambda >0$ it holds
\[\begin{aligned}\|\nabla f(x^k)\| \leq & (1+\lambda L)\| \nabla f^{\lambda,\delta}(x^k)\| \\
&+ L\,
\min\left\{
\sqrt{2d \lambda\delta}\,
\exp\!\Bigl(2\sqrt{2d/\pi}\,G^2\lambda^2\Bigr),\;
\left(\sqrt{2 d}G\lambda+2\sqrt{d\lambda\delta}\right)\,
\exp\!\Bigl(\frac{G^2\lambda}{4\delta}\Bigr)
\right\}.
\end{aligned}\]
\item[(iii)] if $f(y)=\frac{\kappa}{2}\|y\|^2+V(y)$ with $\operatorname{osc}(V)= \sup V - \inf V <+\infty$, for any $\lambda>0$ it holds
\[
\|\nabla f(x^k)\|
\le
(1+\lambda L)\|\nabla f^{\lambda,\delta}(x^k)\|
+
L\sqrt{
d\,e^{\operatorname{osc}(V)/\delta}
\frac{\lambda\delta}{1+\kappa\lambda}
}.
\]
    \end{enumerate}
\end{corollary}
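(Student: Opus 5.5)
This corollary will follow by plugging each bound of Proposition~\ref{prop:bounds_on_poincare} into the second estimate of Theorem~\ref{thm:quasi_min_general}, which gives
\[
\|\nabla f(x^k)\|\le (1+L\lambda)\|\nabla f^{\lambda,\delta}(x^k)\| + L\sqrt{d\,C_P(\delta,x^k)}.
\]
The only thing to check first is that the hypotheses of Theorem~\ref{thm:quasi_min_general} hold. We have $f$ $L$-smooth and, by Assumption~\ref{ass:on_f}, bounded below, so the differentiation under the integral sign of Remark~\ref{rem:differentiation} is justified. We also need $C_P(\delta,x)<\infty$ for every $x$. In each of the three cases this finiteness is supplied by the corresponding item of Proposition~\ref{prop:bounds_on_poincare}, and those bounds are uniform in $x$. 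Hence we may evaluate at $x=x^k$.

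\textbf{Item (i).} With $\lambda<1/L$, item (i) of Proposition~\ref{prop:bounds_on_poincare} gives $C_P(\delta,x^k)\le \lambda\delta/(1-\lambda L)$. Substituting yields $L\sqrt{d\lambda\delta/(1-\lambda L)}$, which is the claimed additive term.

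\textbf{Item (iii).} Substituting $C_P\le e^{\osc(V)/\delta}\lambda\delta/(1+\kappa\lambda)$ gives the stated expression directly.

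\textbf{Item (ii).} Here we use the fact that the square root is monotone, so $\sqrt{d\min\{a,b\}}=\min\{\sqrt{da},\sqrt{db}\}$. For the first branch,
\[
\sqrt{d\cdot 2\lambda\delta\, e^{4\sqrt{2d/\pi}G^2\lambda^2}}=\sqrt{2d\lambda\delta}\,e^{2\sqrt{2d/\pi}G^2\lambda^2}.
\]
For the second branch,
\[
\sqrt{\tfrac d2(2G\lambda+\sqrt{8\lambda\delta})^2e^{G^2\lambda/(2\delta)}}=\tfrac{\sqrt d}{\sqrt2}\bigl(2G\lambda+2\sqrt2\sqrt{\lambda\delta}\bigr)e^{G^2\lambda/(4\delta)}=\bigl(\sqrt{2d}\,G\lambda+2\sqrt{d\lambda\delta}\bigr)e^{G^2\lambda/(4\delta)}.
\]
Multiplying by $L$ recovers the bound of the statement.

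There is no real obstacle here. The only points needing care are the arithmetic of the square roots in item (ii) and confirming that the Poincaré constant bounds are uniform in $x$.
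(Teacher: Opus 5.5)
Your proposal is correct and matches the paper's proof, which substitutes the three Poincaré-constant bounds of Proposition~\ref{prop:bounds_on_poincare} into the second estimate of Theorem~\ref{thm:quasi_min_general}. Your square-root arithmetic for both branches of item (ii) checks out.
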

\begin{proof}
    Substituting the bounds of the previous theorem into Theorem~\ref{thm:quasi_min_general} directly provides the explicit bounds.
\end{proof}

\begin{remark}
    Notice that the result above can also be converted to control on how good for $f$ is the limit point $x_{\lambda,\delta}^\star$ by just substituting $x^k$ with $x_{\lambda,\delta}^\star$ and $\|\nabla f^{\lambda,\delta}(x^k)\|$ with $0$.
\end{remark}

\subsection{Convexification in $\lambda$: proof of Theorem~\ref{thm:convexification}}\label{app:proof_of_convexification}

The bounds in Theorem~\ref{thm:rates} highlight the role of the parameter $\lambda$. Small values of $\lambda$ yield sharper guarantees on $\|\nabla f(x^k)\|$, as the approximation error vanishes with $\lambda$. We show here that, conversely, larger values of $\lambda$ mitigate nonconvexity and simplifies the geometry of the objective.

\begin{theorem}[Hessian identity]\label{thm:hessian}
Let $f$ satisfy Assumption~\ref{ass:on_f}. It holds
\begin{equation}
\nabla^2 f^{\lambda,\delta}(x)=
\frac{1}{\lambda}I
-
\frac{1}{\lambda^2\delta}\,\Sigma_{\lambda,\delta}(x),
\label{eq:hessian-soft-envelope_text}
\end{equation}
where $\Sigma_{\lambda,\delta}(x)
:=
\int_{\mathbb{R}^d}
\big(y-\zprox_{\lambda,f}^\delta(x)\big)
\big(y-\zprox_{\lambda,f}^\delta(x)\big)^\top
\,d\mu_x(y)
\;\succeq\;0$. In particular, if $\Sigma_{\lambda,\delta}(x) \leq \lambda \delta I$, then $f^{\lambda,\delta}$ is convex at $x$.
\end{theorem}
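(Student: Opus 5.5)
The plan is to differentiate the formula $\nabla f^{\lambda,\delta}(x)=\frac{1}{\lambda}\big(x-\zprox^\delta_{\lambda,f}(x)\big)$ from \eqref{eq:zprox_softenvelope_app} once more. Everything then reduces to computing the Jacobian of $T_{\lambda,\delta}(x)=\int y\,d\mu_x(y)$. Write $q_x(y)=\exp(-\varphi_x(y)/\delta)/Z_{\lambda,\delta}(x)$ with $\varphi_x(y)=f(y)+\frac{1}{2\lambda}\|x-y\|^2$. Then
\[
\nabla_x\Big(-\frac{\varphi_x(y)}{\delta}\Big)=\frac{y-x}{\lambda\delta},
\]
so that
\[
\nabla_x\log q_x(y)=\frac{y-x}{\lambda\delta}-\nabla_x\log Z_{\lambda,\delta}(x).
\]
Differentiating $Z_{\lambda,\delta}$ under the integral sign gives $\nabla\log Z_{\lambda,\delta}(x)=\int\frac{y-x}{\lambda\delta}\,d\mu_x=\frac{T_{\lambda,\delta}(x)-x}{\lambda\delta}$. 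Hence the score is $\nabla_x\log q_x(y)=\frac{y-T_{\lambda,\delta}(x)}{\lambda\delta}$.

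Next, differentiate $T_{\lambda,\delta}(x)=\int y\,q_x(y)\,dy$ under the integral sign. This is justified by Remark~\ref{rem:differentiation}: $f$ is bounded below by Assumption~\ref{ass:on_f}, the integrand has Gaussian tails locally uniformly in $x$, and the moments are finite. The result is
\[
DT_{\lambda,\delta}(x)=\int y\,\big(\nabla_x\log q_x(y)\big)^{\top}q_x(y)\,dy=\frac{1}{\lambda\delta}\int y\,\big(y-T_{\lambda,\delta}(x)\big)^{\top}d\mu_x(y).
\]
Since $\int (y-T_{\lambda,\delta}(x))\,d\mu_x=0$, we may replace the first factor $y$ by $y-T_{\lambda,\delta}(x)$. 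This gives $DT_{\lambda,\delta}(x)=\frac{1}{\lambda\delta}\Sigma_{\lambda,\delta}(x)$, which is the classical Tweedie/MMSE identity saying that the Jacobian of the posterior mean equals the posterior covariance over the noise variance.

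Plugging this in,
\[
\nabla^2 f^{\lambda,\delta}(x)=\frac{1}{\lambda}\big(I-DT_{\lambda,\delta}(x)\big)=\frac{1}{\lambda}I-\frac{1}{\lambda^2\delta}\Sigma_{\lambda,\delta}(x).
\]
$\Sigma_{\lambda,\delta}(x)\succeq0$ holds because $u^\top\Sigma_{\lambda,\delta}(x)u=\int\langle u,y-T_{\lambda,\delta}(x)\rangle^2d\mu_x\ge0$. For the final claim, $\Sigma_{\lambda,\delta}(x)\preceq\lambda\delta I$ gives $\nabla^2 f^{\lambda,\delta}(x)\succeq\frac1\lambda I-\frac{1}{\lambda}I=0$, so the Hessian is positive semidefinite at $x$. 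Here "convex at $x$" is read as this local Hessian condition; if it holds on a convex set, convexity on that set follows.

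The only delicate point is justifying the two interchanges of derivative and integral. These are the first derivative of $Z_{\lambda,\delta}$ and the derivative of $\int y\,e^{-\varphi_x(y)/\delta}dy$. For $x$ in a compact set, the relevant integrands are bounded by $P(\|y\|)e^{-c\|y\|^2}e^{-\inf f/\delta}$, so dominated convergence applies, as stated in Remark~\ref{rem:differentiation}. The rest is the algebraic simplification above.
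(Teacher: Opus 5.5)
Your proof is correct and follows essentially the same route as the paper: you compute the score $\nabla_x\log q_x(y)=\frac{y-T_{\lambda,\delta}(x)}{\lambda\delta}$, obtain $DT_{\lambda,\delta}(x)=\frac{1}{\lambda\delta}\Sigma_{\lambda,\delta}(x)$ by centering, and then differentiate $\nabla f^{\lambda,\delta}(x)=\frac{1}{\lambda}\big(x-T_{\lambda,\delta}(x)\big)$. The only cosmetic differences are that you derive $\nabla\log Z_{\lambda,\delta}$ by differentiating under the integral directly, whereas the paper reads it off from $\zprox^\delta_{\lambda,f}(x)=x-\lambda\nabla f^{\lambda,\delta}(x)$, and that you spell out $\Sigma_{\lambda,\delta}(x)\succeq0$ and the convexity consequence, which the paper leaves implicit.
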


\begin{proof}
The derivation under the integral sign in the following are justified, see Remark~\ref{rem:differentiation}. Let $T_{\lambda,\delta}:=\zprox_{\lambda,f}^\delta$ and
\begin{equation}\label{eq:qx_def_app}
    q_x(y) := \frac{1}{Z_{\lambda,\delta}(x)}\exp\left(-\frac{f(y)}{\delta}\right)\exp\left(-\frac{\|x-y\|^2}{2\lambda\delta}\right).
\end{equation} we have 
\[\nabla_x \log q_x(y)=-\nabla_x\left(\frac{\|x-y\|^2}{2\lambda\delta}\right)-\nabla_x\log Z_{\lambda,\delta}(x)
=
-\frac{x-y}{\lambda\delta}-\nabla_x\log Z_{\lambda,\delta}(x)\] 
and thus, since $-\nabla_x \log Z_{\lambda,\delta}(x)=\frac{1}{\delta}\nabla_x f^{\lambda,\delta}(x)$ and by \eqref{eq:zprox_softenvelope_app}
\[
\nabla_x \log q_x(y)
=
-\frac{x-y}{\lambda\delta}
+\frac{x-T_{\lambda,\delta}(x)}{\lambda\delta}
=
\frac{y-T_{\lambda,\delta}(x)}{\lambda\delta}.
\]
Since $\nabla_x q_x(y)=q_x(y)\nabla_x\log q_x(y)$, we have
\begin{align*}
D_xT_{\lambda,\delta}(x)
&=D_x\!\left(\int_{\mathbb{R}^d} y\,q_x(y)\,dy\right)
=\int_{\mathbb{R}^d} y\,(\nabla_x q_x(y))^\top\,dy
\\
&=\int_{\mathbb{R}^d} y\,q_x(y)\,\nabla_x\log q_x(y)^\top\,dy
=\frac{1}{\lambda\delta}\int_{\mathbb{R}^d} y\,\big(y-T_{\lambda,\delta}(x)\big)^\top\,q_x(y)\,dy.
\end{align*}
Decompose $y=\big(y-T_{\lambda,\delta}(x)\big)+T_{\lambda,\delta}(x)$ and use the fact that $\int_{\mathbb{R}^d} \big(y-T_{\lambda,\delta}(x)\big)\,q_x(y)\,dy=0$ to get
\[
\int_{\mathbb{R}^d} y\,\big(y-T_{\lambda,\delta}(x)\big)^\top q_x(y)\,dy
=
\int_{\mathbb{R}^d} \big(y-T_{\lambda,\delta}(x)\big)\big(y-T_{\lambda,\delta}(x)\big)^\top q_x(y)\,dy
=
\Sigma_{\lambda,\delta}(x).
\]
Hence $DT_{\lambda,\delta}(x)=\frac{1}{\lambda\delta}\Sigma_{\lambda,\delta}(x)$. Recalling that
\[
\nabla f^{\lambda,\delta}(x)
=
-\delta\nabla\log Z_{\lambda,\delta}(x)
=
\frac{1}{\lambda}\big(x-T_{\lambda,\delta}(x)\big),
\] 
differentiating and using the expression for $DT_{\lambda,\delta}(x)$ yields
\[
\nabla^2 f^{\lambda,\delta}(x)
=
\frac{1}{\lambda}\big(I-DT_{\lambda,\delta}(x)\big)
=
\frac{1}{\lambda}I-\frac{1}{\lambda^2\delta}\Sigma_{\lambda,\delta}(x).
\]
\end{proof}

Identity \eqref{eq:hessian-soft-envelope_text} separates two competing effects: a positive curvature term
$\lambda^{-1}I$ and a negative correction proportional to the matrix $\Sigma_{\lambda,\delta}(x)$.
Increasing $\lambda$ suppresses the negative term at rate $\lambda^{-2}$ while the positive term decays only as
$\lambda^{-1}$; thus, larger $\lambda$ could progressively remove negative curvature and simplifies the landscape of
$f^{\lambda,\delta}$. A sufficient condition for convexity is $\Sigma_{\lambda,\delta}(x) \preceq \lambda\delta I$ for all $x$.
More generally, if there exists $M<\infty$ such that $\sup_x \lambda_{\max}(\Sigma_{\lambda,\delta}(x))\le M$, then for any
$\lambda \geq M/\delta$ the envelope is convex
\[
\nabla^2 f^{\lambda,\delta}(x)
\succeq
\left(\frac{1}{\lambda}-\frac{M}{\lambda^2\delta}\right)I
=
\frac{\lambda\delta-M}{\lambda^2\delta}I\geq 0,
\qquad \forall x.
\]
In the following we find some cases where these bounds and conditions on $\lambda$ are explicit.

\begin{corollary}[Local convexification via dissipativity]\label{cor:convexification_diss}    
    Let $f:\mathbb{R}^d \to \mathbb{R}$ satisfy Assumption~\ref{ass:on_f} and the dissipativity condition, i.e., there exists $m > 0$ and $b \geq 0$ such that $\langle \nabla f(y), y \rangle \geq m\|y\|^2 - b$ for all $y \in \R^d$. Then, for any radius $R > 0$, $f^{\lambda,\delta}$ is convex on $B(0, R)$ provided
    \begin{equation}
        \lambda \geq \frac{\delta d + b + \sqrt{(\delta d + b)^2 + 2\delta m R^2}}{2\delta m}.
    \end{equation}
\end{corollary}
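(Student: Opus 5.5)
The plan is to reduce convexity of $f^{\lambda,\delta}$ on $B(0,R)$ to a covariance bound via the Hessian identity of Theorem~\ref{thm:hessian}. Since
\[
\nabla^2 f^{\lambda,\delta}(x)=\tfrac{1}{\lambda}I-\tfrac{1}{\lambda^2\delta}\Sigma_{\lambda,\delta}(x),
\]
it suffices to show $\lambda_{\max}(\Sigma_{\lambda,\delta}(x))\le \lambda\delta$ for every $\|x\|\le R$. I will not estimate the top eigenvalue directly. Instead I will use the crude bound
\[
\lambda_{\max}(\Sigma_{\lambda,\delta}(x))\le \operatorname{tr}\Sigma_{\lambda,\delta}(x)=\int\|y-\zprox^\delta_{\lambda,f}(x)\|^2\,d\mu_x(y)\le \int \|y\|^2\,d\mu_x(y)=:s(x).
\]
The last inequality holds because the mean minimizes the expected squared distance, so we may recentre at $0$. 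The task is therefore to bound the second moment $s(x)$ of the Gibbs measure $\mu_x$ using the dissipativity condition.

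The moment bound will come from integration by parts (a Stein/generator identity). Write $\mu_x\propto e^{-U_x}$ with
\[
U_x(y)=\frac{f(y)}{\delta}+\frac{\|y-x\|^2}{2\lambda\delta}.
\]
Then
\[
\int \langle \nabla U_x(y),y\rangle\,d\mu_x(y)=\int \operatorname{div}(y)\,d\mu_x(y)=d .
\]
This requires the boundary terms to vanish. That is the step needing care, since $f$ is only $C^1$ and bounded below; the argument is that $y\,e^{-U_x(y)}$ has Gaussian tails, which by Remark~\ref{rem:differentiation} justifies the integration by parts on large balls and the limit. I expect this justification to be the main technical obstacle. If needed, I would make it rigorous by truncating with a smooth cutoff $\chi(y/n)$ and letting $n\to\infty$, using the Gaussian factor together with $f\ge\min f$.

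Next I expand the integrand pointwise. Dissipativity gives
\[
\langle\nabla U_x(y),y\rangle\ \ge\ \frac{m\|y\|^2-b}{\delta}+\frac{\|y\|^2-\langle x,y\rangle}{\lambda\delta}.
\]
Using Young's inequality $\langle x,y\rangle\le \tfrac12\|y\|^2+\tfrac12\|x\|^2\le\tfrac12\|y\|^2+\tfrac12R^2$, the last term is at least $\frac{\|y\|^2-R^2}{2\lambda\delta}\ge -\frac{R^2}{2\lambda\delta}$, where I drop the nonnegative $\|y\|^2$ part. Integrating and multiplying by $\delta$ yields
\[
d\delta\ \ge\ m\,s(x)-b-\frac{R^2}{2\lambda},\qquad\text{i.e.}\qquad s(x)\le \frac{d\delta+b+R^2/(2\lambda)}{m}.
\]

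Finally, I require this upper bound to be at most $\lambda\delta$. That is the quadratic condition
\[
\delta m\lambda^2-(\delta d+b)\lambda-\tfrac{R^2}{2}\ge 0 .
\]
Since $\lambda>0$, this holds exactly when $\lambda$ is at least the positive root $\frac{\delta d+b+\sqrt{(\delta d+b)^2+2\delta mR^2}}{2\delta m}$, which is the stated threshold. Then $\nabla^2 f^{\lambda,\delta}(x)\succeq 0$ on the convex set $B(0,R)$, so $f^{\lambda,\delta}$ is convex there. The fact that the threshold matches exactly makes me fairly confident this Young-inequality route is the intended one.
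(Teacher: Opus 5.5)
Your proposal is correct and follows the paper's proof essentially step for step: the reduction via the Hessian identity, the integration-by-parts identity $\mathbb{E}_{\mu_x}[\langle\nabla\varphi_x(y),y\rangle]=\delta d$, the same Young-inequality lower bound after dropping the $\|y\|^2/(2\lambda)$ term, and the same quadratic condition in $\lambda$. The differences are cosmetic. You pass through $\lambda_{\max}(\Sigma_{\lambda,\delta}(x))\le\operatorname{tr}\Sigma_{\lambda,\delta}(x)\le\mathbb{E}_{\mu_x}\|y\|^2$, whereas the paper bounds $u^\top\Sigma_{\lambda,\delta}(x)u\le\mathbb{E}_{\mu_x}\|y\|^2$ directly. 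Your cutoff argument for the vanishing boundary terms is also more careful than the paper's one-line appeal to exponential decay.
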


\begin{proof} Here we do not go through the Poincarè inequality since it would result in worse bounds. Instead, we estimate directly the variance. Let $\varphi_x(y) = f(y) + \frac{1}{2\lambda}\|y-x\|^2$, so that $q_x(y) \propto \exp(-\varphi_x(y)/\delta)$. Taking the inner product of $\nabla \varphi_x(y)$ with $y$ and applying the dissipativity condition alongside Young's inequality $\frac{1}{\lambda}\langle x, y \rangle \le \frac{1}{2\lambda}\|x\|^2 + \frac{1}{2\lambda}\|y\|^2$ yields
\[
\langle \nabla \varphi_x(y), y \rangle \ge m\|y\|^2 - b + \frac{1}{2\lambda}\|y\|^2 - \frac{1}{2\lambda}\|x\|^2.
\]
For any $x \in B(0, R)$, we drop the positive $\frac{1}{2\lambda}\|y\|^2$ term and bound $\|x\|^2 \le R^2$ to obtain the uniform lower bound:
\begin{equation}\label{eq:dissipativity_bound_proof}
\langle \nabla \varphi_x(y), y \rangle \ge m\|y\|^2 - \left( b + \frac{R^2}{2\lambda} \right).
\end{equation}
Because $\varphi_x$ grows quadratically, $q_x$ decays exponentially, permitting integration by parts
\[
\int_{\mathbb{R}^d} \langle \nabla \varphi_x(y), y \rangle e^{-\varphi_x(y)/\delta} dy = -\delta \int_{\mathbb{R}^d} y \cdot \nabla \left( e^{-\varphi_x(y)/\delta} \right) dy = \delta d \int_{\mathbb{R}^d} e^{-\varphi_x(y)/\delta} dy.
\]
Dividing by the normalization constant gives the exact identity $\mathbb{E}_{\mu_x}[\langle \nabla \varphi_x(y), y \rangle] = \delta d$. Taking the expectation of \eqref{eq:dissipativity_bound_proof} yields a uniform bound on the uncentered second moment
\[
\delta d \ge m \mathbb{E}_{\mu_x}[\|y\|^2] - \left( b + \frac{R^2}{2\lambda} \right) \implies u^\top \Sigma_{\lambda,\delta}(x) u \le \mathbb{E}_{\mu_x}[\|y\|^2] \le \frac{1}{m}\left( \delta d + b + \frac{R^2}{2\lambda} \right).
\]
Substituting this uniform covariance bound into \eqref{eq:hessian-soft-envelope_text} implies that $f^{\lambda,\delta}$ is convex on $B(0,R)$ if
\[
\frac{1}{\lambda} - \frac{1}{\lambda^2\delta m}\left( \delta d + b + \frac{R^2}{2\lambda} \right) \ge 0.
\]
Multiplying by $\delta m \lambda^3$, this simplifies to the quadratic inequality $\delta m \lambda^2 - (\delta d + b)\lambda - \frac{R^2}{2} \ge 0$. The roots of this parabola are $\frac{(\delta d + b) \pm \sqrt{(\delta d + b)^2 + 2\delta m R^2}}{2\delta m}$. Since the leading coefficient $\delta m$ is positive, the inequality holds for all $\lambda$ greater than or equal to the positive root.
\end{proof}

\begin{remark}
    The dissipativity condition is a standard assumption in the literature of non-convex optimization and sampling, particularly within the analysis of Langevin dynamics (see, e.g., \cite{Raginsky2017, XCZG2018}). Geometrically, it ensures that the gradient is approximately aligned with the position vector outside of bounded sets. This ensures that the tails of the associated localized Gibbs measure decay rapidly enough to admit a uniformly bounded covariance matrix.
\end{remark}

\begin{proposition}[Convexification via Poincaré]\label{prop:convexification_via_poincare}
Assume that $\mu_x$ as in \eqref{eq:def_mux} admits a uniform Poincaré constant  $C_P(\delta) < \infty$ such that $C_P(\delta, x) \le C_P(\delta)$ for all $x \in \mathbb{R}^d$. If the stepsize $\lambda$ satisfies
$$ \lambda \geq \frac{C_P(\delta)}{\delta}, $$
then $f^{\lambda,\delta}$ is convex.
\end{proposition}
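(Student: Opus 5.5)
The plan is to combine the Hessian identity of Theorem~\ref{thm:hessian} with the covariance bound of Lemma~\ref{lem:covariance_poincare}. Neither step needs any new estimate.

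First, fix an arbitrary $x\in\mathbb{R}^d$. By hypothesis $\mu_x$ satisfies a Poincaré inequality with constant $C_P(\delta,x)\le C_P(\delta)$. Lemma~\ref{lem:covariance_poincare} then gives
\[\Sigma_{\lambda,\delta}(x)\preceq C_P(\delta,x)\, I\preceq C_P(\delta)\, I.\]
This bound holds uniformly in $x$, since the constant $C_P(\delta)$ does not depend on $x$.

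Second, substitute this bound into \eqref{eq:hessian-soft-envelope_text}:
\[\nabla^2 f^{\lambda,\delta}(x)=\frac{1}{\lambda}I-\frac{1}{\lambda^2\delta}\Sigma_{\lambda,\delta}(x)\succeq \frac{\lambda\delta-C_P(\delta)}{\lambda^2\delta}\, I .\]
The assumption $\lambda\ge C_P(\delta)/\delta$ is exactly the condition that makes the right-hand side positive semidefinite. Equivalently, it gives $\Sigma_{\lambda,\delta}(x)\preceq \lambda\delta I$, which is the sufficient condition already stated in Theorem~\ref{thm:hessian}.

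Third, $f^{\lambda,\delta}$ is $C^2$; indeed it is real analytic, as shown in the proof of Theorem~\ref{thm:zoppa_convergence}. A $C^2$ function on $\mathbb{R}^d$ whose Hessian is positive semidefinite everywhere is convex, so $f^{\lambda,\delta}$ is convex.

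The argument has no real obstacle. The only point to check is that Lemma~\ref{lem:covariance_poincare} applies to the linear test functions $y\mapsto u^\top y$. These are admissible because $\mu_x$ has finite second moments (Remark~\ref{rem:differentiation}), so they belong to $L^2(\mu_x)$ and have constant gradient.
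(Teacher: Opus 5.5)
Your proof is correct and follows the paper's argument: the paper likewise applies Lemma~\ref{lem:covariance_poincare} with the uniform bound to get $\Sigma_{\lambda,\delta}(x)\preceq C_P(\delta)I$, substitutes this into the Hessian identity of Theorem~\ref{thm:hessian}, and reads off $\nabla^2 f^{\lambda,\delta}(x)\succeq\bigl(\frac{1}{\lambda}-\frac{C_P(\delta)}{\lambda^2\delta}\bigr)I\succeq 0$. Your two extra checks, that $f^{\lambda,\delta}$ is $C^2$ so a positive semidefinite Hessian gives convexity and that linear test functions are admissible, are left implicit in the paper.
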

\begin{proof}
By Lemma~\ref{lem:covariance_poincare} and the uniform bound, we have $\Sigma_{\lambda,\delta}(x) \preceq C_P(\delta) I$. Substituting this into the Hessian identity of Theorem~\ref{thm:hessian}
$$ \nabla^2 f^{\lambda,\delta}(x) = \frac{1}{\lambda}I - \frac{1}{\lambda^2\delta}\Sigma_{\lambda,\delta}(x),$$
we obtain
$$ \nabla^2 f^{\lambda,\delta}(x) \succeq \left( \frac{1}{\lambda} - \frac{C_P(\delta)}{\lambda^2\delta} \right) I. $$
For $f^{\lambda,\delta}$ to be convex, we require the right-hand side to be non-negative, which holds rearranged as $\lambda \ge C_P(\delta) / \delta$.
\end{proof}

\begin{corollary}[Convexification via bounded oscillations]\label{cor:convexification_osc}
Let $f$ of the form $f(y) = \frac{\kappa}{2}\|y\|^2 + V(y)$ for some $\kappa > 0$, where $V$ is continuous bounded with global oscillation $\osc (V) := \sup V - \inf V < \infty$ (in particular $f$ satisfies also Assumption~\ref{ass:on_f}). Then, $f^{\lambda,\delta}$ is globally convex provided
    \begin{equation}\label{eq:lambda_big_osc}
        \lambda \geq \frac{e^{\osc(V)/\delta}-1}{\kappa}
    \end{equation}
    \end{corollary}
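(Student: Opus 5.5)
The plan is to combine the Hessian identity of Theorem~\ref{thm:hessian} with the covariance bound of Lemma~\ref{lem:covariance_poincare} and the explicit Poincaré estimate of Proposition~\ref{prop:bounds_on_poincare}(iii). This is essentially the route of Proposition~\ref{prop:convexification_via_poincare}. The one subtlety is that the Poincaré constant here depends on $\lambda$ itself, so I will verify the needed inequality directly rather than quote that proposition with a $\lambda$-independent $C_P(\delta)$.

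First I check that $f$ satisfies Assumption~\ref{ass:on_f}, so that all earlier results apply. Since $V$ is continuous and bounded, $f$ is continuous and $f(y)\ge \frac{\kappa}{2}\|y\|^2+\inf V$. Hence $f$ is coercive and attains a minimum, and $e^{-f/\delta}\le e^{-\inf V/\delta}e^{-\kappa\|y\|^2/(2\delta)}\in L^1(\R^d)$.

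Next, for every $x\in\R^d$, Proposition~\ref{prop:bounds_on_poincare}(iii) gives
\[
C_P(\delta,x)\le e^{\osc(V)/\delta}\frac{\lambda\delta}{1+\kappa\lambda}.
\]
This comes from Bakry--Émery for the Gaussian part $\frac{\kappa}{2}\|y\|^2+\frac{1}{2\lambda}\|y-x\|^2$, followed by Holley--Stroock for the bounded perturbation $V/\delta$. The bound is uniform in $x$. By Lemma~\ref{lem:covariance_poincare}, $\Sigma_{\lambda,\delta}(x)\preceq C_P(\delta,x)I$. Inserting this into \eqref{eq:hessian-soft-envelope_text} yields
\[
\nabla^2 f^{\lambda,\delta}(x)\succeq \frac{1}{\lambda}\Bigl(1-\frac{e^{\osc(V)/\delta}}{1+\kappa\lambda}\Bigr)I \qquad \text{for all } x.
\]
The right-hand side is positive semidefinite exactly when $1+\kappa\lambda\ge e^{\osc(V)/\delta}$, that is, when \eqref{eq:lambda_big_osc} holds. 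A function with an everywhere positive semidefinite Hessian is convex on $\R^d$, so $f^{\lambda,\delta}$ is globally convex. Its smoothness is guaranteed by Remark~\ref{rem:differentiation} and Theorem~\ref{thm:hessian}.

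The only genuine obstacle is the Poincaré step. One must make sure the Holley--Stroock perturbation constant is $e^{\osc(V)/\delta}$, with the oscillation of $V/\delta$ entering once, and not squared or doubled. One must also make sure the bound holds uniformly in the center $x$. Both follow because $\osc(V/\delta)=\osc(V)/\delta$ does not depend on $x$, and because the strong convexity modulus $(\kappa+1/\lambda)/\delta$ of the base potential is likewise independent of $x$. This has already been established in Proposition~\ref{prop:bounds_on_poincare}(iii), so the remaining argument reduces to the algebra above.
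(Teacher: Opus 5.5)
Your proposal is correct and follows essentially the paper's route. The paper feeds the bound of Proposition~\ref{prop:bounds_on_poincare}(iii) into Proposition~\ref{prop:convexification_via_poincare}, which is just the Hessian identity combined with the covariance lemma. It applies that proposition with the $\lambda$-dependent constant to obtain $\lambda \ge \frac{\lambda}{1+\kappa\lambda}e^{\osc(V)/\delta}$, which is your condition $1+\kappa\lambda \ge e^{\osc(V)/\delta}$ after unpacking.
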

\begin{proof} 
By Proposition~\ref{prop:bounds_on_poincare} it holds
\[
C_P(\delta,x)\leq e^{\osc(V)/\delta}
\frac{\lambda\delta}{1+\kappa\lambda}.
\]
and therefore by Proposition~\ref{prop:convexification_via_poincare} we have the condition $\lambda \geq \frac{\lambda}{1+\kappa \lambda}e^{\osc(V)/\delta}$ which translates to \eqref{eq:lambda_big_osc}.
\end{proof}

\begin{remark}[Comparison with Moreau envelope]
    Corollary~\ref{cor:convexification_osc} highlights a fundamental structural difference between the soft Moreau envelope ($\delta > 0$) and the classical Moreau envelope ($\delta \to 0$). For the classical Moreau envelope to be convex, the original function $f$ must typically be $\rho$-weakly convex (i.e., its negative curvature is globally bounded), and one must choose a sufficiently \emph{small} stepsize $\lambda < 1/\rho$. If $f$ possesses bounded oscillations but unbounded negative curvature (e.g., $V$ is continuous but nowhere differentiable), the classical envelope will generally fail to be convex for any choice of $\lambda$. In contrast, the soft envelope leverages the global integration to average out arbitrary bounded non-convexities. This entropic smoothing allows $f^{\lambda, \delta}$ to become globally convex for sufficiently \emph{large} $\lambda$, bypassing the need for weak convexity. The fact that the threshold in \eqref{eq:lambda_big_osc} diverges to $+\infty$ as $\delta \to 0$ perfectly captures the collapse of this phenomenon: in the deterministic zero-temperature limit, global macroscopic convexification via large stepsizes is impossible.
\end{remark}

In Figure~\ref{fig:convexification_app} we show convexification of various functions when $\lambda$ is larger, while noticing that this becomes harder when $\delta$ is small, further hinting to the fact that the $\delta\to 0$ regime considered in the literature is detrimental. Looking at the shapes and local minima for the various $f^{\lambda,\delta}$ these experiments hint also to the fact that the continuation strategy we propose in Section \ref{sec:exp_lambdarole} could be effective.

\begin{figure}[htbp]
    \centering
    \includegraphics[width=\linewidth]{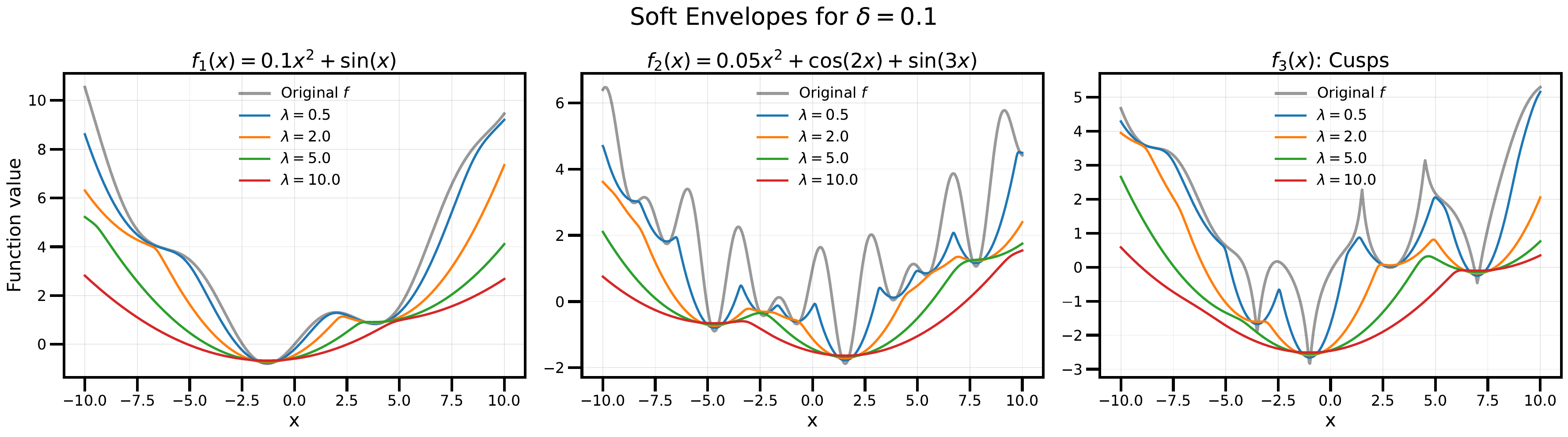} 

    \includegraphics[width=\linewidth]{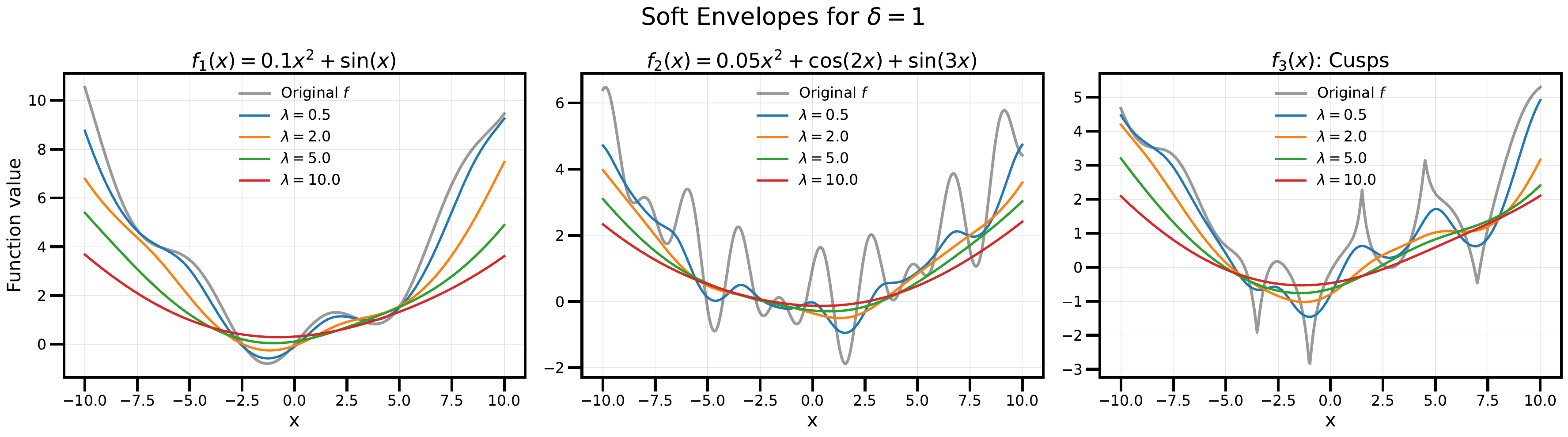} 

    \includegraphics[width=\linewidth]{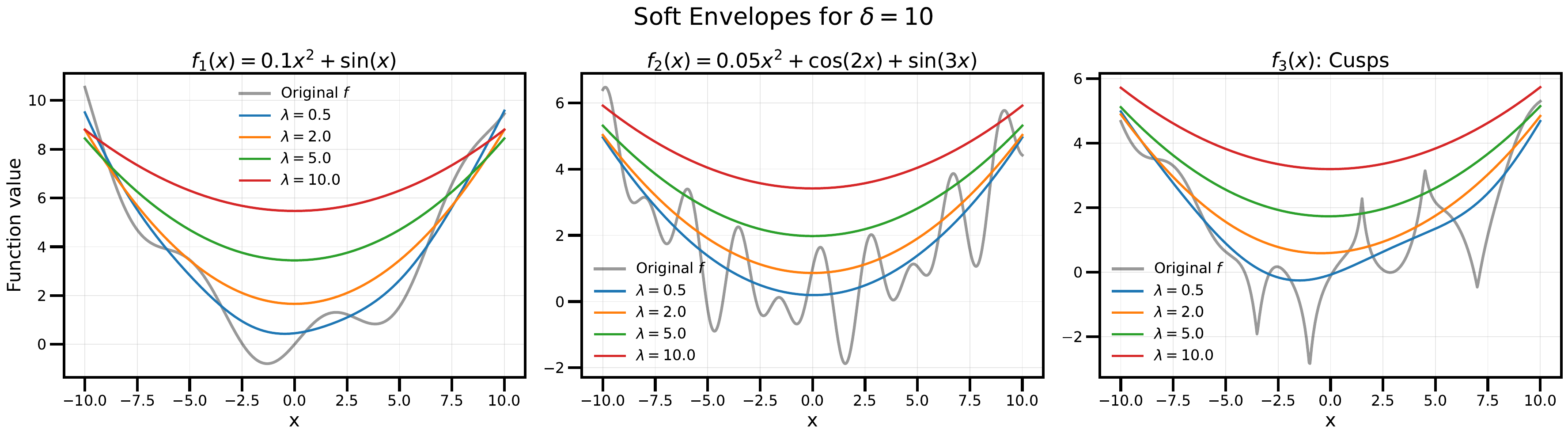} 

    \includegraphics[width=\linewidth]{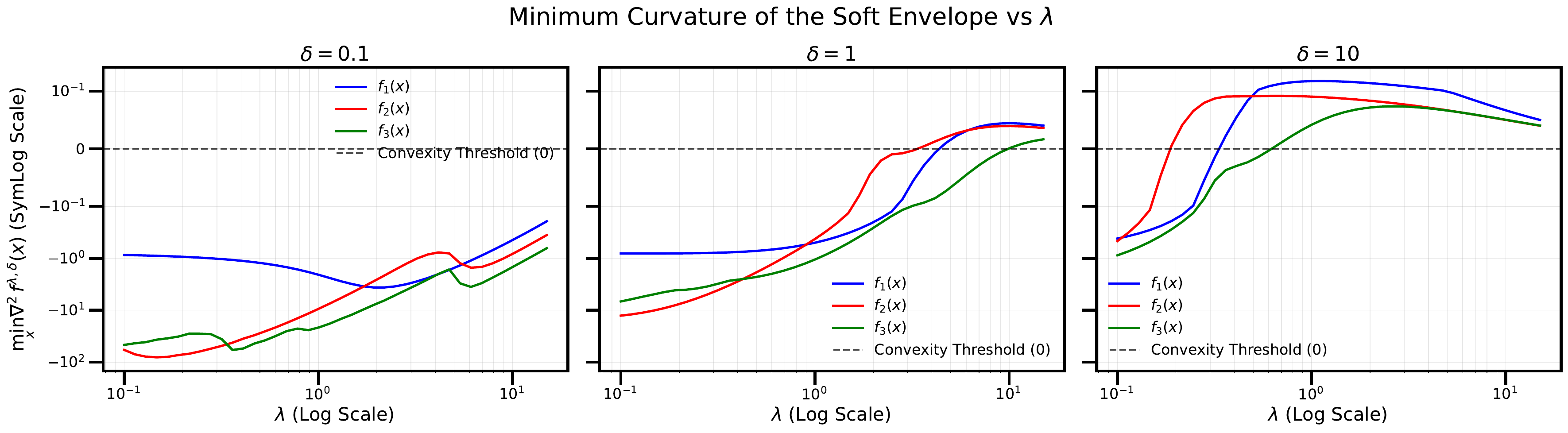}
    
    \caption{Example of convexification effect in $\lambda$. In this picture it is also shown the fact that going towards minimizers for $f^{\lambda,\delta}$ moves the iterates in low energy regions also for the original $f$. The convexification effect is quite quick in practice for $\delta$ not too small, while for smaller $\delta$ it requires large $\lambda$.}\label{fig:convexification_app}
\end{figure}

\section{The convex case}

\subsection{Firm nonexpansivity of the zeroth order prox: proof of Theorem~\ref{thm:zoppa_cvx}}\label{app:proof_of_zoppa_cvx}

\begin{theorem}\label{thm:fne}
Let $f:\mathbb{R}^d\to\mathbb{R}$ satisfy Assumption~\ref{ass:on_f} such that $f^{\lambda,\delta}$ is convex. Then there exists a  convex function $H_{\lambda,\delta}:\mathbb{R}^d\to\mathbb{R}$ such that
\[
\zprox_{\lambda, f}^{\delta}=\prox_{\lambda \delta H_{\lambda,\delta}},
\]
and $\zprox_{\lambda, f}^{\delta}$ is firmly nonexpansive. This happens in particular if $f$ is convex or $f$, $\lambda$, $\delta$ satisfy the conditions in Proposition~\ref{prop:convexification_via_poincare} or Corollary~\ref{cor:convexification_osc}.
\end{theorem}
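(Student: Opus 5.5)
The proof goes through the Hessian identity of Theorem~\ref{thm:hessian}. Write $T:=T_{\lambda,\delta}=\zprox^\delta_{\lambda,f}$. From its proof we have the Jacobian
\[
DT(x)=\tfrac{1}{\lambda\delta}\Sigma_{\lambda,\delta}(x),
\]
which is symmetric and positive semidefinite. The Hessian identity also gives
\[
I-DT(x)=\lambda\nabla^2 f^{\lambda,\delta}(x).
\]
If $f^{\lambda,\delta}$ is convex, the right-hand side is positive semidefinite, so $0\preceq DT(x)\preceq I$ for all $x$.

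\textbf{Firm nonexpansiveness.} Since $DT$ is symmetric, $T$ is the gradient of a $C^\infty$ potential. Explicitly,
\[
T=\nabla\psi,\qquad \psi(x)=\tfrac12\|x\|^2-\lambda f^{\lambda,\delta}(x),
\]
because $T(x)=x-\lambda\nabla f^{\lambda,\delta}(x)$. Then $\nabla^2\psi=DT$ with $0\preceq\nabla^2\psi\preceq I$. So $\psi$ is convex and $1$-smooth. By the Baillon--Haddad theorem, $\nabla\psi=T$ is $1$-cocoercive, which is exactly firm nonexpansiveness. A direct route is also available: integrate $DT$ along the segment $[x,y]$. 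This gives $T(x)-T(y)=M(x-y)$ with $M=\int_0^1 DT(y+t(x-y))\,dt$ symmetric and $0\preceq M\preceq I$. Since $M-M^2\succeq 0$, we get $\langle T x-Ty,x-y\rangle\ge\|Tx-Ty\|^2$.

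\textbf{Convexity of $H_{\lambda,\delta}$.} A firmly nonexpansive operator has the form $J_A=(I+A)^{-1}$ with $A$ maximally monotone. Here $T=\nabla\psi$ with $\psi$ convex, so $T^{-1}=\partial\psi^*$, and hence
\[
A=T^{-1}-I=\partial\bigl(\psi^*-\tfrac12\|\cdot\|^2\bigr).
\]
I would show directly that $g:=\psi^*-\tfrac12\|\cdot\|^2$ is convex. This holds because $\psi$ is $1$-smooth, so $\psi^*$ is $1$-strongly convex. Moreover $\psi^*$ is finite on all of $\mathbb{R}^d$, since $\nabla\psi=T$ is surjective by Proposition~\ref{prop:zopo_surjective} and $\psi$ is essentially smooth.

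This yields $T=\prox_g$. By uniqueness of the potential up to constants (Theorem~\ref{thm:zoppa_is_prox} gives $T=\prox_{\lambda\delta H_{\lambda,\delta}}$, and $T^{-1}-I=\lambda\delta\nabla H_{\lambda,\delta}$ on $\mathbb{R}^d$ since $T$ is a diffeomorphism), we get $\lambda\delta H_{\lambda,\delta}=g+\text{const}$. Therefore $H_{\lambda,\delta}$ is convex. As a consistency check, differentiating \eqref{eq:H_def} at $z=T(x)$ should give $\lambda\delta\nabla H_{\lambda,\delta}(z)=T^{-1}(z)-z$.

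\textbf{Sufficient conditions.} Proposition~\ref{prop:convexification_via_poincare} and Corollary~\ref{cor:convexification_osc} directly assert convexity of $f^{\lambda,\delta}$. For $f$ convex, $\mu_x$ is $\frac{1}{\lambda\delta}$-strongly log-concave. The Brascamp--Lieb (or Bakry--Émery) inequality then gives $\Sigma_{\lambda,\delta}(x)\preceq\lambda\delta I$, and Theorem~\ref{thm:hessian} yields convexity. Alternatively, $e^{-f^{\lambda,\delta}/\delta}$ is a convolution of log-concave functions, which is log-concave by Prékopa's theorem.

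\textbf{Main obstacle.} The delicate step is identifying the convex potential with the specific $H_{\lambda,\delta}$ of \eqref{eq:H_def} rather than just with some convex function. This needs the gradient computation of $H_{\lambda,\delta}$ and the global surjectivity of $T$, so that the two potentials agree on all of $\mathbb{R}^d$ up to a constant. Finite-valuedness of $\psi^*$ also relies on that surjectivity.
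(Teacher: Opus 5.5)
Your argument is correct and is essentially the paper's second route. The Hessian identity gives $0\preceq\nabla^2 f^{\lambda,\delta}\preceq\frac1\lambda I$, so $\psi=\frac12\|\cdot\|^2-\lambda f^{\lambda,\delta}$ is convex with $1$-Lipschitz gradient $\nabla\psi=\zprox^\delta_{\lambda,f}$, and you handle the sufficient conditions as the paper does (Bakry--\'Emery with the covariance bound, or preservation of log-concavity under convolution). Where the paper cites Moreau's characterization of proximal maps and \cite[Lemma 2]{Gribonval2013}, you instead use Baillon--Haddad, the explicit convex potential $\psi^*-\frac12\|\cdot\|^2$ (finite everywhere because $\zprox^\delta_{\lambda,f}$ is onto), and the identity $\lambda\delta\nabla H_{\lambda,\delta}=(\zprox^\delta_{\lambda,f})^{-1}-I$. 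This makes the argument self-contained and also shows that the specific $H_{\lambda,\delta}$ of \eqref{eq:H_def} is convex.
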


\begin{proof} This is a direct application of Theorem~\ref{thm:zoppa_convergence} and \cite[Lemma 2]{Gribonval2013}, where it is stated that $f^{\lambda,\delta}$ is convex if and only if the correspondent $H_{\lambda,\delta}$ introduced in Theorem~\ref{thm:zoppa_convergence} is convex. The result can also be deduced from the spectral bounds $0\preceq \nabla^2 f^{\lambda,\delta}(x)\preceq \frac1\lambda I$, which ensure that the function $\psi(x) := \frac{1}{2}\|x\|^2 - \lambda f^{\lambda,\delta}$ is convex and has a gradient ($\nabla \psi = \zprox_{\lambda,f}^\delta$) that is $1$-Lipschitz. By the fundamental characterization of proximal mappings (see \cite[Proposition 1]{gribonval2020characterization, lauga2026} but also \cite[Corollary 10.c]{moreau1965}), a map is the proximal operator of a proper lower semicontinuous convex function if and only if it is the gradient of a convex lower semicontinuous function and is non-expansive. These conditions are satisfied here, and firm non-expansivity follows as an inherent property of proximal operators associated with convex penalties. Finally, in the special where $f$ is convex, we can use Proposition~\ref{prop:convexification_via_poincare} or also \cite[Proposition 23]{lauga2026} which states that when $f$ is convex, $f^{\lambda,\delta}$ is convex (log-concavity is preserved by convolution \cite[Proposition 3.5]{SaumardWellner2014}).
\end{proof}

\begin{corollary}
  Let $f:\mathbb{R}^d\to\mathbb{R}$ satisfy Assumption~\ref{ass:on_f} such that $f^{\lambda,\delta}$ is convex and denote $X^\star_{\lambda,\delta}:=\argmin f^{\lambda,\delta}$. Then, the sequence $(x^k)_{k\in\mathbb{N}}$ generated by Algorithm \ref{alg:zoppa} converges to some $x^\star_{\lambda,\delta}\in X_{\lambda,\delta}^{\star}$, and for every $k\ge1$,
\begin{equation}\label{eq:GDrates_app}
    f^{\lambda,\delta}(x^k)-\min f^{\lambda,\delta}
\le
\frac{\dist(x^0,X_{\lambda,\delta}^{\star})^2}{2\lambda k}, \quad \|\nabla f^{\lambda,\delta}(x^k)\|
\le
\frac{2\,\dist(x^0,X_{\lambda,\delta}^{\star})}{\lambda(k+1)}.
\end{equation}
\end{corollary}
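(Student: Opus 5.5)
The plan is to read ZOPPA as plain gradient descent on the soft Moreau envelope with stepsize $\lambda$, namely $x^{k+1}=x^k-\lambda\nabla f^{\lambda,\delta}(x^k)$ by \eqref{eq:zprox_softenvelope_app}, and then to invoke the standard convex analysis of gradient descent with step $1/L$, where $L=1/\lambda$.

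\textbf{Smoothness.} First I would record the smoothness of $f^{\lambda,\delta}$. By the Hessian identity of Theorem~\ref{thm:hessian}, $\nabla^2 f^{\lambda,\delta}(x)=\frac1\lambda I-\frac{1}{\lambda^2\delta}\Sigma_{\lambda,\delta}(x)\preceq \frac1\lambda I$, since $\Sigma_{\lambda,\delta}(x)\succeq0$. Convexity gives $\nabla^2 f^{\lambda,\delta}\succeq 0$. Hence $f^{\lambda,\delta}$ is convex and $(1/\lambda)$-smooth. Coercivity, shown in the proof of Theorem~\ref{thm:zoppa_convergence}, gives $X^\star_{\lambda,\delta}\neq\emptyset$.

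\textbf{Function-value rate.} The descent lemma gives $f^{\lambda,\delta}(x^{k+1})\le f^{\lambda,\delta}(x^k)-\frac\lambda2\|\nabla f^{\lambda,\delta}(x^k)\|^2$. Fix $x^\star\in X^\star_{\lambda,\delta}$ and combine this with convexity, $f^{\lambda,\delta}(x^k)-f^{\lambda,\delta}(x^\star)\le\langle\nabla f^{\lambda,\delta}(x^k),x^k-x^\star\rangle$. Expanding $\|x^{k+1}-x^\star\|^2$ then yields
\[
f^{\lambda,\delta}(x^{k+1})-\min f^{\lambda,\delta}\le \frac{1}{2\lambda}\big(\|x^k-x^\star\|^2-\|x^{k+1}-x^\star\|^2\big).
\]
Summing from $0$ to $k-1$ telescopes the right-hand side. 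Monotonicity of the values, from the descent inequality in Theorem~\ref{thm:descent_on_f}, then gives $f^{\lambda,\delta}(x^k)-\min f^{\lambda,\delta}\le \|x^0-x^\star\|^2/(2\lambda k)$. Taking $x^\star$ to be the projection of $x^0$ onto the closed convex set $X^\star_{\lambda,\delta}$ gives the first bound in \eqref{eq:GDrates_app}. The same inequality shows $\|x^{k}-x^\star\|$ is nonincreasing for every $x^\star\in X^\star_{\lambda,\delta}$, i.e.\ the sequence is Fejér monotone. This Fejér monotonicity is what I will use for convergence.

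\textbf{Convergence of the iterates.} Theorem~\ref{thm:zoppa_convergence} already gives $x^k\to x^\star_{\lambda,\delta}$ with $\nabla f^{\lambda,\delta}(x^\star_{\lambda,\delta})=0$. By convexity, every stationary point is a minimizer, so $x^\star_{\lambda,\delta}\in X^\star_{\lambda,\delta}$. Alternatively, Fejér monotonicity together with the Opial lemma, using that $\zprox^\delta_{\lambda,f}$ is firmly nonexpansive by Theorem~\ref{thm:fne}, gives the same conclusion.

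\textbf{Gradient rate.} This is the step I expect to be the main obstacle, since it needs the $O(1/k)$ last-iterate gradient bound with the exact constant $2/(\lambda(k+1))$. Two facts come from firm nonexpansiveness of $T=\zprox^\delta_{\lambda,f}$ (Theorem~\ref{thm:fne}):
\begin{itemize}
\item The residual $\|x^{k+1}-x^k\|=\|Tx^k-Tx^{k-1}\|\le\|x^k-x^{k-1}\|$ is nonincreasing.
\item The residuals are square summable: $\sum_{j\ge0}\|x^{j+1}-x^j\|^2\le \|x^0-x^\star\|^2$.
\end{itemize}
Together these already give $\|x^{k+1}-x^k\|\le \dist(x^0,X^\star_{\lambda,\delta})/\sqrt{k+1}$, which is not enough. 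To get the faster rate I would use the known tight last-iterate result for gradient descent on $L$-smooth convex functions with step $1/L$. One way is a performance-estimation/Lyapunov argument that combines the cocoercivity inequality $\langle\nabla f^{\lambda,\delta}(x)-\nabla f^{\lambda,\delta}(y),x-y\rangle\ge\lambda\|\nabla f^{\lambda,\delta}(x)-\nabla f^{\lambda,\delta}(y)\|^2$ with the function-value rate. The other is simply citing the result, e.g.\ Kim–Fessler's bound $\|\nabla f(x^k)\|\le L\|x^0-x^\star\|/(k+1)$. Either gives a constant at most $2/(\lambda(k+1))$. Finally, since $\lambda\|\nabla f^{\lambda,\delta}(x^k)\|=\|x^{k+1}-x^k\|$, the gradient bound is the same as a bound on the fixed-point residual of a firmly nonexpansive operator. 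So I could instead cite the standard $O(1/k)$ last-iterate residual bound for the proximal point method with constant $2$.
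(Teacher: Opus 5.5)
Your proposal is correct and follows the paper's route. The paper also treats ZOPPA as gradient descent with step $\lambda=1/L$ on the convex, $1/\lambda$-smooth function $f^{\lambda,\delta}$ (using $0\preceq\nabla^2 f^{\lambda,\delta}\preceq \frac1\lambda I$), but it only cites the standard convergence, function-value and last-iterate gradient-norm results (Dunn; Beck's book, Theorems 10.21 and 10.27), whereas you make these steps explicit via the telescoping argument, the reuse of Theorem~\ref{thm:zoppa_convergence}, and the Kim--Fessler bound, which has the sharper constant $1$. One caveat on your closing remark: the $O(1/k)$ residual bound holds because $\zprox^\delta_{\lambda,f}$ is the prox of a \emph{convex} function, equivalently gradient descent on a convex smooth function; firm nonexpansiveness alone gives only $O(1/\sqrt{k})$, as you yourself noted.
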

\begin{proof}
From the previous theorem, we established that $f^{\lambda,\delta}$ is convex and $1/\lambda$-smooth. Consequently, ZOPPA is equivalent to a gradient descent algorithm with a fixed stepsize $\lambda = 1/L$. The convergence to a minimizer and the functional rates are standard in optimization literature, see \cite{Dunn_GD} and \cite[Theorem 10.21]{BeckBook}. The explicit rate for $\|\nabla f^{\lambda,\delta}(x^k)\|$ is less common in the literature, but it can be deduced for example from \cite[Theorem 10.27]{BeckBook}.
\end{proof}

\subsection{Rates: proof of Theorem~\ref{thm:zoppa_cvx_rates}}\label{app:proof_of_zoppa_cvx_rates}

\begin{theorem}[Non-smooth case]
Let $f:\mathbb{R}^d\to \mathbb{R}$ be a convex function satisfying Assumption \ref{ass:on_f}. For any $x \in \mathbb{R}^d$, the gradient of the soft Moreau envelope satisfies:
$$\nabla f^{\lambda,\delta}(x) \in \partial_{d\delta} f\left(\zprox_{\lambda, f}^\delta(x)\right),$$
where $\partial_\varepsilon f$ denotes the $\varepsilon$-subdifferential of $f$. This implies
\[f(x^{k})-\min_y f(y)\leq \|\nabla f^{\lambda,\delta}(x^{k-1})\|\operatorname{dist}(x^{k},X^\star)+ d \delta.\]
\end{theorem}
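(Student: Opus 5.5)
We will show that $g:=\nabla f^{\lambda,\delta}(x)$ satisfies $f(z)\ge f(\bar y)+\langle g,z-\bar y\rangle-d\delta$ for all $z$, where $\bar y:=\zprox^\delta_{\lambda,f}(x)=\E_{\mu_x}[y]$ and $\mu_x$ is the measure in \eqref{eq:def_mux}. The plan is to average the exact subgradient inequality of $f$ over $\mu_x$ and control the resulting error by an integration-by-parts identity. We use the relation $g=(x-\bar y)/\lambda$ from \eqref{eq:zprox_softenvelope_app}.

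\textbf{Step 1 (smooth case first).} Assume first that $f$ is $C^1$. Let $\varphi_x(y)=f(y)+\frac{1}{2\lambda}\|y-x\|^2$, so that $\nabla\varphi_x(y)=\nabla f(y)+\frac{y-x}{\lambda}$. Integration by parts, as in the proof of Corollary~\ref{cor:convexification_diss} and justified by Remark~\ref{rem:differentiation}, gives $\E_{\mu_x}[\nabla\varphi_x(y)]=0$. Hence $\E_{\mu_x}[\nabla f(y)]=(x-\bar y)/\lambda=g$. The same argument gives the identity
\[
\E_{\mu_x}[\langle\nabla\varphi_x(y),y-\bar y\rangle]=\delta d .
\]
For any $z$, convexity yields $f(z)\ge f(y)+\langle\nabla f(y),z-y\rangle$. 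We take expectations over $y\sim\mu_x$ and use Jensen's inequality, $\E f(y)\ge f(\bar y)$, to obtain
\[
f(z)\ge f(\bar y)+\langle g,z-\bar y\rangle-\E_{\mu_x}\langle\nabla f(y),y-\bar y\rangle .
\]
Substituting $\nabla f(y)=\nabla\varphi_x(y)-(y-x)/\lambda$, the error term equals
\[
\delta d-\tfrac1\lambda\E_{\mu_x}\langle y-x,y-\bar y\rangle=\delta d-\tfrac1\lambda\operatorname{tr}\Sigma_{\lambda,\delta}(x)\le d\delta .
\]
This proves $g\in\partial_{d\delta}f(\bar y)$. The term $-\operatorname{tr}\Sigma/\lambda$ is in fact a bonus, since it only makes the bound tighter.

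\textbf{Step 2 (nonsmooth $f$).} This is the main obstacle, because $f$ is only convex and continuous, so $\nabla f$ need not exist. I would first try to avoid derivatives of $f$ altogether by integrating by parts only against the Gaussian factor, but the term $\E\langle\partial f(y),y-\bar y\rangle$ genuinely requires some form of derivative of $f$. The fallback is approximation. Convex $f$ is locally Lipschitz, hence differentiable almost everywhere, with $\nabla f\in L^\infty_{loc}$, and $e^{-f/\delta}$ is locally Lipschitz. The integration-by-parts identity therefore still holds in the Sobolev sense, using a.e. gradients and the Gaussian tails for the boundary terms. The pointwise inequality $f(z)\ge f(y)+\langle\nabla f(y),z-y\rangle$ holds at every differentiability point, which is almost every $y$. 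Consequently Step 1 goes through verbatim.

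Alternatively, one can replace $f$ by its Moreau envelope $f^\epsilon$, which is convex and $C^1$, and pass to the limit $\epsilon\to0$. Monotone convergence $f^\epsilon\uparrow f$ gives convergence of $\bar y$ and $g$ by dominated convergence, and the $\varepsilon$-subdifferential inequality is closed under these pointwise limits.

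\textbf{Step 3 (rate).} Apply the inclusion at $x=x^{k-1}$, so that $\bar y=x^k$, and take $z=P_{X^\star}(x^k)$. This gives
\[
\min f\ge f(x^k)-\|\nabla f^{\lambda,\delta}(x^{k-1})\|\,\dist(x^k,X^\star)-d\delta
\]
by Cauchy–Schwarz, which is the claimed rate.
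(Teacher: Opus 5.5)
Your proposal is correct and follows essentially the same route as the paper. You average the (a.e.) subgradient inequality over $\mu_x$, integrate by parts against $e^{-\varphi_x/\delta}$ to produce the $d\delta$ term, apply Jensen to get $\E_{\mu_x} f \ge f(\zprox^\delta_{\lambda,f}(x))$, drop the nonnegative $\frac{1}{\lambda}\operatorname{tr}\Sigma_{\lambda,\delta}(x)$ term, and obtain the rate by Cauchy--Schwarz at the projection onto $X^\star$. The only differences are bookkeeping (you center at $\bar y$ via $\E_{\mu_x}[\nabla\varphi_x]=0$, whereas the paper integrates by parts against $w\mapsto y-w$ for a general $y$). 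Your Moreau-envelope limiting argument is also a somewhat more explicit justification of the nonsmooth case than the paper's brief remark about weak integration by parts.
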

\begin{proof} Let \(q_x\) be the density defined in \eqref{eq:qx_def_app}, and write \[ \hat y := \zprox_{\lambda,f}^{\delta}(x) = \int_{\mathbb{R}^d} w q_x(w)\,dw . \] Since \(f\) is finite and convex on \(\mathbb{R}^d\), it is locally Lipschitz and differentiable almost everywhere by Rademacher's theorem. Hence, for every \(y\in\mathbb{R}^d\) and for a.e. \(w\in\mathbb{R}^d\), \[ f(y)\ge f(w)+\langle \nabla f(w),y-w\rangle . \] Multiplying by \(q_x(w)\) and integrating gives \[ f(y) \ge \int_{\mathbb{R}^d} f(w)q_x(w)\,dw + \int_{\mathbb{R}^d} \langle \nabla f(w),y-w\rangle q_x(w)\,dw . \] Let  $\varphi_x(w) := f(w)+\frac{1}{2\lambda}\|w-x\|^2$, so that $\nabla f(w) = \nabla \varphi_x(w)-\frac{w-x}{\lambda}$ for a.e. \(w\). Therefore, \[ \begin{aligned} \int_{\mathbb{R}^d} \langle \nabla f(w),y-w\rangle q_x(w)\,dw &= \int_{\mathbb{R}^d} \langle \nabla \varphi_x(w),y-w\rangle q_x(w)\,dw \\ &\quad -\frac{1}{\lambda} \int_{\mathbb{R}^d} \langle w-x,y-w\rangle q_x(w)\,dw . \end{aligned} \] Since \(f\) is finite and convex on \(\mathbb{R}^d\), it is locally Lipschitz and differentiable a.e. Moreover, \(\varphi_x\) is convex and grows at least quadratically because of the term \(\|w-x\|^2/(2\lambda)\). Hence the following integrations by parts are justified in the weak sense, for instance by applying the identities to smooth approximations of \(f\) and passing to the limit. Since \(q_x(w)=Z_{\lambda,\delta}(x)^{-1}e^{-\varphi_x(w)/\delta}\), and since the
boundary terms vanish, integration by parts gives
\[\begin{aligned}
\int_{\mathbb{R}^d}
\left\langle \nabla \varphi_x(w), y-w \right\rangle
e^{-\varphi_x(w)/\delta}\,dw & =
-\delta
\int_{\mathbb{R}^d}
\left\langle
\nabla\left(e^{-\varphi_x(w)/\delta}\right),
y-w
\right\rangle\,dw \\
& =
\delta
\int_{\mathbb{R}^d}
e^{-\varphi_x(w)/\delta}
\nabla\cdot(y-w)\,dw \\
& =
-d\delta
\int_{\mathbb{R}^d}
e^{-\varphi_x(w)/\delta}\,dw .
\end{aligned}
\]
Dividing by \(Z_{\lambda,\delta}(x)\), we obtain
\[
\int_{\mathbb{R}^d}
\left\langle \nabla \varphi_x(w), y-w \right\rangle
q_x(w)\,dw
=
-d\delta.
\]
For the remaining term, we can write
\[
\begin{aligned}
-\int_{\mathbb{R}^d}
\langle w-x,y-w\rangle q_x(w)\,dw &=
\int_{\mathbb{R}^d}
\langle w-x,w-y\rangle q_x(w)\,dw \\
& =
\int_{\mathbb{R}^d}
\left\langle
(w-\hat y)+(\hat y-x),
(w-\hat y)+(\hat y-y)
\right\rangle q_x(w)\,dw .
\end{aligned}
\]
Expanding the scalar product gives
\[
\begin{aligned}
&-\int_{\mathbb{R}^d}
\langle w-x,y-w\rangle q_x(w)\,dw \\
&=
\int_{\mathbb{R}^d}\|w-\hat y\|^2q_x(w)\,dw
+
\left\langle
\int_{\mathbb{R}^d}(w-\hat y)q_x(w)\,dw,
\hat y-y
\right\rangle \\
&\quad+
\left\langle
\hat y-x,
\int_{\mathbb{R}^d}(w-\hat y)q_x(w)\,dw
\right\rangle
+
\langle \hat y-x,\hat y-y\rangle .
\end{aligned}
\]
By the definition of \(\hat y\) we have \(\int_{\mathbb{R}^d}(w-\hat y)q_x(w)\,dw=0\) and the two middle terms vanish. Therefore,
\[
-\int_{\mathbb{R}^d}
\langle w-x,y-w\rangle q_x(w)\,dw
=
\int_{\mathbb{R}^d}\|w-\hat y\|^2q_x(w)\,dw
+
\langle \hat y-x,\hat y-y\rangle .
\]
Combining the previous identities yields \[ \begin{aligned} f(y) &\ge \int_{\mathbb{R}^d} f(w)q_x(w)\,dw + \frac{1}{\lambda} \int_{\mathbb{R}^d}\|w-\hat y\|^2 q_x(w)\,dw \\ &\quad + \left\langle \frac{\hat y-x}{\lambda}, \hat y-y \right\rangle - d\delta . \end{aligned} \] By Jensen's inequality, \[ \int_{\mathbb{R}^d} f(w)q_x(w)\,dw \ge f\left(\int_{\mathbb{R}^d} wq_x(w)\,dw\right) = f(\hat y), \] and the square term is non-negative. Therefore, \[ f(y) \ge f(\hat y) + \left\langle \frac{x-\hat y}{\lambda}, y-\hat y \right\rangle - d\delta . \] This is precisely the defining inequality \[ \frac{x-\hat y}{\lambda} \in \partial_{d\delta}f(\hat y). \] Since $ \frac{x-\hat y}{\lambda} = \nabla f^{\lambda,\delta}(x)$, we obtain \[ \nabla f^{\lambda,\delta}(x) \in \partial_{d\delta}f(\zprox_{\lambda,f}^{\delta}(x)). \] Finally, taking \(x=x^{k-1}\), so that \(x^k=\zprox_{\lambda,f}^{\delta}(x^{k-1})\), gives \[ \nabla f^{\lambda,\delta}(x^{k-1}) \in \partial_{d\delta}f(x^k). \] Thus, for any \(x^\star\in X^\star\), \[ f(x^\star) \ge f(x^k) + \langle \nabla f^{\lambda,\delta}(x^{k-1}), x^\star-x^k \rangle - d\delta . \] Rearranging and applying Cauchy--Schwarz yields \[ f(x^k)-\min f \le \|\nabla f^{\lambda,\delta}(x^{k-1})\| \operatorname{dist}(x^k,X^\star) + d\delta . \] \end{proof}

\begin{theorem}\label{thm:quasi_min_cvx}
Let $f$ convex, satisfy Assumption~\ref{ass:on_f} and be $L$-smooth. Then
    \[\|\nabla f(x^k)\| \leq (1+\lambda L) \|\nabla f^{\lambda, \delta}(x^k)\|+ L \sqrt{d \lambda \delta}.\]
    and additionally to \eqref{eq:rate_nonsmooth} it also holds
    \[f(x^k) - \min_y f(y) \leq \left((1+\lambda L) \|\nabla f^{\lambda, \delta}(x^k)\|+ L \sqrt{d \lambda \delta}\right)\dist(x^k,X^\star).\]
\end{theorem}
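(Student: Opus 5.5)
The plan is to obtain the gradient bound as a special case of Theorem~\ref{thm:quasi_min_general}, by bounding the Poincar\'e constant of $\mu_x$ uniformly in $x$, and then to derive the function-value bound from convexity of $f$.

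\textbf{Step 1: uniform Poincar\'e constant.} Since $f$ is convex, for every $x$ the potential
\[
\varphi_x(y)=f(y)+\tfrac{1}{2\lambda}\|y-x\|^2
\]
is $\tfrac1\lambda$-strongly convex. The density of $\mu_x$ is proportional to $e^{-\varphi_x/\delta}$, and the potential $\varphi_x/\delta$ is $\tfrac{1}{\lambda\delta}$-strongly convex. The Bakry--\'Emery criterion, used exactly as in Proposition~\ref{prop:bounds_on_poincare}(i), then gives $C_P(\delta,x)\le \lambda\delta$ for every $x$. Convexity removes the restriction $\lambda<1/L$ needed in that proposition: we simply replace the weak-convexity modulus $L$ by $0$, so the strong-convexity parameter is $1/\lambda$ rather than $1/\lambda - L$.

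\textbf{Step 2: gradient bound.} $f$ is $L$-smooth and $\mu_x$ satisfies a Poincar\'e inequality, so Theorem~\ref{thm:quasi_min_general} applies. Inserting $C_P(\delta,x^k)\le\lambda\delta$ gives
\[
\|\nabla f(x^k)\|\le (1+\lambda L)\|\nabla f^{\lambda,\delta}(x^k)\|+L\sqrt{d\lambda\delta}.
\]

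\textbf{Step 3: function-value bound.} Fix $x^\star\in X^\star$ equal to the projection of $x^k$ onto $X^\star$. This projection exists because $X^\star$ is nonempty by Assumption~\ref{ass:on_f} and is closed and convex. The gradient inequality for convex $f$ gives
\[
f(x^\star)\ge f(x^k)+\langle\nabla f(x^k),x^\star-x^k\rangle .
\]
Rearranging and applying Cauchy--Schwarz yields
\[
f(x^k)-\min f\le \|\nabla f(x^k)\|\,\dist(x^k,X^\star).
\]
Substituting the bound from Step 2 gives the claimed estimate. The bound \eqref{eq:rate_nonsmooth} already holds by the preceding non-smooth theorem, whose hypotheses are contained in those here.

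\textbf{Main obstacle.} The only delicate point is Step 1: one must check that Bakry--\'Emery applies with a merely convex $C^1$ (here $L$-smooth) $f$. This is standard, since $\nabla^2\varphi_x\succeq \tfrac1\lambda I$ holds in the sense of distributions, or one can approximate $f$ by smooth convex functions.

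As a fallback, Lemma~\ref{lem:covariance_poincare} only needs the covariance bound $\Sigma_{\lambda,\delta}(x)\preceq\lambda\delta I$. This bound also follows from the Brascamp--Lieb inequality for strongly log-concave measures, so the trace bound $d\lambda\delta$ survives either way.
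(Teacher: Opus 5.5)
Your proposal is correct and follows the paper's proof essentially verbatim: Bakry--\'Emery applied to the $\frac{1}{\lambda}$-strongly convex potential $\varphi_x$ gives $C_P(\delta,x)\le\lambda\delta$, which is substituted into Theorem~\ref{thm:quasi_min_general}, and the function-value bound then follows from the convexity inequality $f(x^k)-\min f\le -\langle\nabla f(x^k),x^\star-x^k\rangle$ together with Cauchy--Schwarz. Your extra remarks only spell out details the paper leaves implicit: taking $x^\star$ to be the projection onto $X^\star$, and the regularity caveat for applying Bakry--\'Emery when $f$ is merely $L$-smooth.
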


\begin{proof}
Because $f$ is convex, the function $\varphi_x(y) := f(y) + \frac{1}{2\lambda}\|x-y\|^2$ is strongly convex with parameter $\rho = \frac{1}{\lambda}> 0$. By the Bakry-Émery criterion \cite[Proposition 4.8.1 and Corollary 4.8.2]{bakry2014analysis}, the measure $\mu_x$ as in \eqref{eq:def_mux} satisfies a Poincaré inequality with constant $C_P(\delta, x) \leq \lambda\delta$. Substituting this bound into Theorem~\ref{thm:quasi_min_general} provides the first inequality. Finally, the convexity of the function $f$ provides
\[f(x^k) - \min_y f(y) \leq -\langle \nabla f(x^k) , x^\star - x^k\rangle,\]
and the thesis.
\end{proof}

\section{Proof of Theorem~\ref{thm:S-ZOPPA}}\label{app:proof_S-ZOPPA}

The main challenge of the proof of Theorem~\ref{thm:S-ZOPPA} is to bound uniformly the variance of the estimator \(\szprox^{\delta,N}_{\lambda, f}\). We do it in the following lemma.

\begin{lemma}\label{lem:boundedness}
    Let \(f:\mathbb{R}^d\to \R\) be continuous and lower bounded. Then, for any compact set \(K\subset\R^d\) and any \(\varepsilon>0\), there exists \(C_{K,\varepsilon}>0\) such that
    \[
    \forall x\in K,\quad
    \mathbb{E}\left[
    \left\|
    \szprox^{\delta,N}_{\lambda, f}(x)
    -
    \zprox^{\delta}_{\lambda, f}(x)
    \right\|^2
    \mathbf{1}_{\Omega_{x,\varepsilon}}
    \right]
    \le
    \frac{C_{K,\varepsilon}}{N},
    \]
    where
    \[
    \Omega_{x,\varepsilon}
    =
    \left\{
    \omega:
    \frac{1}{N}\sum_{i=1}^N\exp(-f(y_i)/\delta)\ge\varepsilon
    \right\},
    \]
    and \(\left(y_i\right)_{i=1}^N\) are the i.i.d. samples from
    \(\mathcal{N}(x,\lambda\delta I)\) generating
    \(\szprox^{\delta,N}_{\lambda, f}(x)\).
\end{lemma}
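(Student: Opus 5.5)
The plan is the classical ratio-estimator decomposition, with the event $\Omega_{x,\varepsilon}$ keeping the empirical denominator away from zero. Fix $x\in K$ and write $w_i=\exp(-f(y_i)/\delta)$. Set
\[
\hat D=\frac1N\sum_{i=1}^N w_i,\qquad \hat S=\frac1N\sum_{i=1}^N y_iw_i .
\]
Their population counterparts are
\[
D(x)=\E_{y\sim\mathcal N(x,\lambda\delta I)}[w],\qquad S(x)=\E[yw],
\]
so that $\szprox^{\delta,N}_{\lambda,f}(x)=\hat S/\hat D$ and $\zprox^\delta_{\lambda,f}(x)=S/D$. Writing $T:=\zprox^\delta_{\lambda,f}(x)$, we use the exact identity
\[
\frac{\hat S}{\hat D}-T=\frac{1}{\hat D}\Big(\frac1N\sum_{i=1}^N (y_i-T)\,w_i\Big).
\]
The summands $\xi_i:=(y_i-T)w_i$ are i.i.d. with mean $\E[(y-T)w]=S-TD=0$.

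On $\Omega_{x,\varepsilon}$ we have $\hat D\ge\varepsilon$, and therefore
\[
\Big\|\tfrac{\hat S}{\hat D}-T\Big\|^2\mathbf 1_{\Omega_{x,\varepsilon}}\le \varepsilon^{-2}\Big\|\tfrac1N\sum_i\xi_i\Big\|^2 .
\]
Taking expectations (we simply drop the indicator on the right-hand side) and using independence and zero mean gives
\[
\E\Big\|\tfrac1N\sum_i\xi_i\Big\|^2=\frac{1}{N}\E\|\xi_1\|^2 .
\]
This reduces the problem to a uniform bound on $\E\|\xi_1\|^2$ over $x\in K$.

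To bound this second moment, let $m=\inf f$, so that $w\le e^{-m/\delta}$. Then
\[
\E\|\xi_1\|^2\le e^{-2m/\delta}\,\E_{y\sim\mathcal N(x,\lambda\delta I)}\|y-T(x)\|^2=e^{-2m/\delta}\big(d\lambda\delta+\|x-T(x)\|^2\big).
\]
Since $\zprox^\delta_{\lambda,f}$ is continuous (indeed $C^\infty$ by Theorem~\ref{thm:zoppa_is_prox}/Proposition~\ref{prop:zopo_surjective}), $\sup_{x\in K}\|x-T(x)\|<\infty$ on the compact $K$. We can therefore take
\[
C_{K,\varepsilon}=\varepsilon^{-2}e^{-2m/\delta}\Big(d\lambda\delta+\sup_{x\in K}\|x-\zprox^\delta_{\lambda,f}(x)\|^2\Big).
\]

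The only delicate point is continuity of $T$ under the lemma's hypotheses (continuous and lower bounded, without the integrability condition A2). This does not need Assumption~\ref{ass:on_f}. The numerator and denominator are Gaussian convolutions of bounded, respectively linearly growing, functions, so they are continuous in $x$ by dominated convergence (Remark~\ref{rem:differentiation}), and the denominator is strictly positive. Hence $T$ is continuous and the supremum over $K$ is finite.

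Centering at $T$ rather than splitting the error into separate numerator and denominator errors avoids any cross terms. For this reason I expect no real obstacle beyond checking this continuity and the finiteness of the moments.
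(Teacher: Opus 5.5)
Your proof is correct, and it takes a different and shorter route than the paper's.

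\textbf{How the paper argues.} It expands \(\hat{\mathbf u}/\hat v-\mathbf u/v\) into four terms: two linear in the fluctuations \(\hat{\mathbf u}-\mathbf u\) and \(\hat v-v\), and two quadratic ones carrying \(\hat v\ge\varepsilon\) in the denominator. The linear terms are controlled by second moments of empirical means. The quadratic terms need fourth-moment estimates for centered i.i.d.\ averages together with Cauchy--Schwarz. Throughout, it uses that \(\mathbf u\) is bounded and \(v\) is bounded away from zero on \(K\).

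\textbf{How you argue.} You use the exact self-normalized identity \(\hat S/\hat D-T=\hat D^{-1}\frac1N\sum_i (y_i-T)w_i\). Its numerator is already an average of centered i.i.d.\ vectors, so a single second-moment computation suffices. There are no cross terms and no fourth moments. You get the explicit constant \(\varepsilon^{-2}e^{-2\inf f/\delta}\bigl(d\lambda\delta+\sup_{x\in K}\|x-\zprox^\delta_{\lambda,f}(x)\|^2\bigr)\). The only regularity you need is continuity of \(\zprox^\delta_{\lambda,f}\) on \(K\), which you correctly justify by dominated convergence without invoking (A2).

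\textbf{What the paper's decomposition buys.} It gives finer information about where \(\varepsilon\) enters. Its \(1/N\) terms have constants independent of \(\varepsilon\), and \(\varepsilon\) appears only in the \(O(1/N^2)\) terms, so the leading constant has the usual delta-method form. In your bound, \(\varepsilon^{-2}\) multiplies the leading term. Before the crude step \(w\le e^{-\inf f/\delta}\), your bound is the asymptotic variance constant \(\E[w^2\|y-T\|^2]/(\E w)^2\) with \(\E w\) replaced by \(\varepsilon\). In the regime of Proposition~\ref{prop:stability}, where \(\varepsilon<\inf_{x\in K}\E[w]\), this is a somewhat larger constant. For the lemma as stated, and for its use in Theorem~\ref{thm:S-ZOPPA}, only the order \(C/N_k\) matters, so your argument is fully adequate and arguably preferable for its simplicity.
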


\begin{proof}
We start by introducing some notation. For i.i.d. samples
\(y_1,\ldots,y_N\) from \(\mathcal{N}(x,\lambda \delta I)\), we denote
\[
    \mathbf u_i
    =
    y_i\exp(-f(y_i)/\delta),
    \qquad
    v_i
    =
    \exp(-f(y_i)/\delta),
\]
\[
    \hat{\mathbf u}
    =
    \frac{1}{N}\sum_{i=1}^N\mathbf u_i,
    \qquad
    \hat v
    =
    \frac{1}{N}\sum_{i=1}^Nv_i,
\]
and
\[
\begin{gathered}
\mathbf u
=
(2\pi\lambda\delta)^{-\frac{d}{2}}
\int_{\mathbb{R}^d}
y\,
\exp\!\left(-\frac{f(y)}{\delta}\right)
\exp\!\left(-\frac{\|x-y\|^2}{2\lambda\delta}\right)\,dy,
\\
v
=
(2\pi\lambda\delta)^{-\frac{d}{2}}
\int_{\mathbb{R}^d}
\exp\!\left(-\frac{f(y)}{\delta}\right)
\exp\!\left(-\frac{\|x-y\|^2}{2\lambda\delta}\right)\,dy.
\end{gathered}
\]
Then
\[
    \mathbb{E}[\hat{\mathbf u}]=\mathbf u,
    \qquad
    \mathbb{E}[\hat v]=v,
\]
and, for a given set of samples,
\[
    \szprox^{\delta,N}_{\lambda, f}(x)
    =
    \frac{\hat{\mathbf u}}{\hat v},
    \qquad
    \zprox^{\delta}_{\lambda, f}(x)
    =
    \frac{\mathbf u}{v}.
\]

First note that \(\mathbf u\) and \(v\) are continuous functions of \(x\), and consequently bounded on any compact set. In addition, since \(f\) is finite-valued, \(v(x)>0\) for every \(x\), and hence \(v\) is bounded away from zero on \(K\). Furthermore, since \(f\) is bounded below, for every \(p\ge 1\),
\[
\sup_{x\in K}\mathbb{E}\|\mathbf u_1\|^p<+\infty,
\qquad
\sup_{x\in K}\mathbb{E}|v_1|^p<+\infty .
\]
Indeed, if \(f\ge \inf f\), then
\[
\begin{aligned}
&\int_{\mathbb{R}^d}(1+\|y\|^p)
\exp\left(-\frac{p f(y)}{\delta}\right)
\exp\!\left(-\frac{\|x-y\|^2}{2\lambda\delta}\right)dy \\
&\qquad\le
\exp\left(-\frac{p\inf f}{\delta}\right)
\int_{\mathbb{R}^d}(1+\|y\|^p)
\exp\!\left(-\frac{\|x-y\|^2}{2\lambda\delta}\right)dy,
\end{aligned}
\]
and the last quantity is uniformly bounded for \(x\in K\).

On the event \(\Omega_{x,\varepsilon}=\{\hat v\ge \varepsilon\}\), the algebraic expansion
\[
\frac{\hat{\mathbf u}}{\hat v}-\frac{\mathbf u}{v}
=
\frac{\hat{\mathbf u}-\mathbf u}{v}
-
\frac{\mathbf u(\hat v-v)}{v^2}
-
\frac{(\hat{\mathbf u}-\mathbf u)(\hat v-v)}{v\hat v}
+
\frac{\mathbf u(\hat v-v)^2}{v^2\hat v}
\]
gives the pointwise bound
\[
\left\|
\szprox_{\lambda,f}^{\delta,N}(x)
-
\zprox_{\lambda,f}^{\delta}(x)
\right\|
\le A_1+A_2+A_3+A_4,
\]
where
\[
A_1:=\frac{\|\hat{\mathbf u}-\mathbf u\|}{v},
\quad
A_2:=\frac{\|\mathbf u\|\,|\hat v-v|}{v^2}, \quad A_3:=
\frac{\|\hat{\mathbf u}-\mathbf u\|\,|\hat v-v|}{v\varepsilon},
\quad
A_4:=
\frac{\|\mathbf u\|\,|\hat v-v|^2}{v^2\varepsilon}.
\]
Therefore, using \((\sum_{j=1}^4 A_j)^2\le 4\sum_{j=1}^4 A_j^2\), we obtain
\[
\begin{aligned}
\mathbb{E}\left[
\left\|
\szprox_{\lambda,f}^{\delta,N}(x)
-
\zprox_{\lambda,f}^{\delta}(x)
\right\|^2
\mathbf 1_{\Omega_{x,\varepsilon}}
\right] &\le
\frac{4}{v^2}\mathbb{E}\|\hat{\mathbf u}-\mathbf u\|^2
+
\frac{4\|\mathbf u\|^2}{v^4}\mathbb{E}|\hat v-v|^2 \\
&\quad+
\frac{4}{v^2\varepsilon^2}
\mathbb{E}\left[
\|\hat{\mathbf u}-\mathbf u\|^2|\hat v-v|^2
\right]
+
\frac{4\|\mathbf u\|^2}{v^4\varepsilon^2}
\mathbb{E}|\hat v-v|^4 .
\end{aligned}
\]
Since \(\hat{\mathbf u}\) and \(\hat v\) are empirical averages of i.i.d. random variables,
\[
\mathbb{E}\|\hat{\mathbf u}-\mathbf u\|^2
=
\frac{1}{N}\mathbb{E}\|\mathbf u_1-\mathbf u\|^2,
\qquad
\mathbb{E}|\hat v-v|^2
=
\frac{1}{N}\mathbb{E}|v_1-v|^2.
\]
Moreover, the fourth moments of \(\mathbf u_1\) and \(v_1\) are finite and uniformly bounded for \(x\in K\). Since the variables \(\mathbf u_i-\mathbf u\) and \(v_i-v\) are centered and i.i.d., the elementary estimate
\[
\mathbb{E}\left\|\frac1N\sum_{i=1}^N \xi_i\right\|^4
\le
\frac{C}{N^2}\mathbb{E}\|\xi_1\|^4
\]
for centered i.i.d. variables gives
\[
\mathbb{E}\|\hat{\mathbf u}-\mathbf u\|^4
\le
\frac{C_K}{N^2},
\qquad
\mathbb{E}|\hat v-v|^4
\le
\frac{C_K}{N^2}.
\]
By Cauchy--Schwarz,
\[
\mathbb{E}\left[
\|\hat{\mathbf u}-\mathbf u\|^2|\hat v-v|^2
\right]
\le
\sqrt{
\mathbb{E}\|\hat{\mathbf u}-\mathbf u\|^4
\mathbb{E}|\hat v-v|^4
}
\le
\frac{C_K}{N^2}.
\]
Since \(v\) is bounded away from zero on \(K\), and \(\mathbf u\) is bounded on \(K\), all prefactors are uniformly bounded. Therefore, there exists \(C_{K,\varepsilon}>0\) such that, for every \(x\in K\),
\[
\mathbb{E}\left[
\left\|
\szprox_{\lambda,f}^{\delta,N}(x)
-
\zprox_{\lambda,f}^{\delta}(x)
\right\|^2
\mathbf 1_{\Omega_{x,\varepsilon}}
\right]
\le
\frac{C_{K,\varepsilon}}{N}.
\]
\end{proof}

The proof of Theorem~\ref{thm:S-ZOPPA} is then an application of \cite[Theorem~2.5]{combettes2015stochastic} and \cite[Theorem~4.1]{combettes2026} to the sequence \(\left(x^k\right)_{k\in\mathbb{N}}\).

First note that the sequence generated by Algorithm \ref{alg:szd} can be written as
    \[\forall k\in\mathbb{N},\quad x^{k+1}=x^k+\lambda_k(T_{\lambda,\delta}(x^k)+e^k-x^k),\]
    with \(\lambda_k=1\), \(T_{\lambda,\delta}=\zprox^{\delta}_{\lambda, f}\) which is firmly non expansive from Theorem~\ref{thm:zoppa_cvx} and \(e_k=\szprox^{\delta,N_k}_{\lambda, f}(x^k)-\zprox^{\delta}_{\lambda, f}(x^k)\) for any \(k\in\mathbb{N}\). From the boundedness of \(\left(x^k\right)_{k\in\mathbb{N}}\), we get that there exists a compact set \(K\subset\R^d\) such that for any \(k\in\mathbb{N}\), \(x^k\in K\). Moreover, the normalization assumption and Lemma~\ref{lem:boundedness} guarantee that there exists \(C_{K,\varepsilon}>0\) such that for all \(k\in\mathbb{N}\),
    \[\mathbb{E}\left[\|e_k\|^2|x^k\right]\le\frac{C_{K,\varepsilon}}{N_k}.\]
    By choosing the number of samples such that \(\sum_{k\in\mathbb{N}}N_k^{-1/2}<\infty\), we can therefore ensure that
    \[\sum_{k\in\mathbb{N}}\sqrt{\mathbb{E}\left[\|e_k\|^2|x^k\right]}<\infty.\]
    Finally, since \(T_{\lambda,\delta}\) is firmly non expansive from Theorem~\ref{thm:zoppa_cvx}, we have that for any fixed point \(z\in\R^d\) of \(T_{\lambda,\delta}\),
    \[\left\|T_{\lambda,\delta}(x^k)-z\right\|^2\le \|x^k-z\|^2-\|T_{\lambda,\delta}(x^k)-x^k\|^2.\]
    We get the desired conclusion by observing that $\Fix T_{\lambda,\delta} = \argmin f^{\lambda,\delta}$ and applying \cite[Theorem~2.5]{combettes2015stochastic} and \cite[Theorem~4.1]{combettes2026} (or more indirectly \cite[Proposition~5.1]{combettes2015stochastic}).

The technical assumptions in Theorem~\ref{thm:S-ZOPPA} regarding the sequence
\((x^k)_{k\in\mathbb{N}}\) are used to control the variance of the stochastic proximal estimator.
The following proposition gives a quantitative estimate showing that, along bounded trajectories,
the empirical normalization remains bounded away from zero with high probability.

\begin{proposition}[Stability of the sampled normalization constant]\label{prop:stability}
Let \(f:\mathbb{R}^d\to\mathbb{R}\) satisfy Assumption~\ref{ass:on_f} and assume that
\(f\ge \underline f\). Let
\[
    \bar Z_{\lambda,\delta}(x)
    :=
    \mathbb{E}_{Y\sim\mathcal N(x,\lambda\delta I)}
    \left[e^{-f(Y)/\delta}\right]
    =
    (2\pi\lambda\delta)^{-d/2}Z_{\lambda,\delta}(x).
\]
Assume that the sequence \((x^k)_{k\in\mathbb{N}}\) generated by S-ZOPPA remains in
\(B_M(0)\), and set
\[
    \underline Z_M
    :=
    \inf_{\|x\|\le M}\bar Z_{\lambda,\delta}(x)>0.
\]
Then, for every \(\varepsilon\in(0,\underline Z_M)\), and by letting $C:=e^{-\underline f/\delta}$ we have for every \(k\in\mathbb{N}\)
\[
\mathbb{P}\left(
    \frac1{N_k}\sum_{i=1}^{N_k}e^{-f(y_i^k)/\delta}
    <
    \varepsilon
\right)
\le
\exp\left(
-\frac{2N_k(\underline Z_M-\varepsilon)^2}{C^2}
\right).
\]
\end{proposition}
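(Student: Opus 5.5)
The plan is a one-sided Hoeffding inequality applied conditionally on the current iterate. Fix \(k\) and condition on \(x^k\), which is measurable with respect to the samples drawn at earlier iterations. Given \(x^k\), the samples \(y_1^k,\dots,y_{N_k}^k\) are i.i.d. \(\mathcal N(x^k,\lambda\delta I)\). The weights \(v_i:=e^{-f(y_i^k)/\delta}\) are i.i.d. and, since \(f\ge\underline f\), take values in \([0,C]\) with \(C=e^{-\underline f/\delta}\). Their conditional mean is \(\bar Z_{\lambda,\delta}(x^k)\).

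First, I will check that \(\underline Z_M>0\). The map \(x\mapsto \bar Z_{\lambda,\delta}(x)\) is continuous: the integrand is dominated by \(C\) times a Gaussian density, so dominated convergence applies (as in Remark~\ref{rem:differentiation}). It is also strictly positive, because \(f\) is finite-valued and the integrand is therefore positive everywhere. Hence its infimum over the compact ball \(\overline{B_M(0)}\) is attained and positive.

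Second, on the event \(\|x^k\|\le M\), which holds by assumption, the deviation \(\varepsilon-\bar Z_{\lambda,\delta}(x^k)\le \varepsilon-\underline Z_M<0\) gives the inclusion
\[
\Bigl\{\tfrac1{N_k}\textstyle\sum_i v_i<\varepsilon\Bigr\}\subseteq\Bigl\{\tfrac1{N_k}\textstyle\sum_i v_i-\bar Z_{\lambda,\delta}(x^k)\le -(\underline Z_M-\varepsilon)\Bigr\}.
\]
Hoeffding's inequality for i.i.d. variables in \([0,C]\) then bounds the conditional probability of the right-hand event by \(\exp\bigl(-2N_k(\underline Z_M-\varepsilon)^2/C^2\bigr)\). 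Since this bound does not depend on \(x^k\), taking expectation over \(x^k\) (tower property) gives the unconditional statement.

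The only delicate point is the conditioning. The samples at step \(k\) are not independent of the past, only conditionally i.i.d. given \(x^k\), so Hoeffding must be applied to the conditional law. The uniform bound \(\bar Z_{\lambda,\delta}(x^k)\ge \underline Z_M\), valid because the trajectory stays in \(B_M(0)\), is what makes the resulting estimate deterministic and therefore allows it to pass through the expectation. Everything else is routine.
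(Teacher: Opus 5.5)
Your proposal is correct and follows the paper's proof essentially step for step. Both apply Hoeffding's inequality conditionally on $x^k$ to the weights $e^{-f(y_i^k)/\delta}\in(0,C]$, use the uniform bound $\bar Z_{\lambda,\delta}(x^k)\ge\underline Z_M>\varepsilon$, and finish with the tower property. Your extra verification that $\underline Z_M>0$, via continuity and positivity of $\bar Z_{\lambda,\delta}$ on the compact ball, is a small addition that the paper simply builds into the statement.
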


\begin{proof}
Let
\[
    w_i^k:=e^{-f(y_i^k)/\delta},
    \qquad
    y_i^k\sim\mathcal N(x^k,\lambda\delta I).
\]
Conditionally on \(x^k\), the variables \(w_i^k\) are i.i.d. and satisfy
\[
    0<w_i^k\le C:=e^{-\underline f/\delta}.
\]
Moreover,
\[
    \mathbb{E}[w_i^k\mid x^k]
    =
    \bar Z_{\lambda,\delta}(x^k).
\]
Since \(x^k\in B_M(0)\), we have
\[
    \bar Z_{\lambda,\delta}(x^k)\ge \underline Z_M>\varepsilon.
\]
By Hoeffding's inequality \cite[Theorem 2.8]{Boucheron2013}, conditionally on \(x^k\),
\[
\begin{aligned}
\mathbb{P}\left(
    \frac1{N_k}\sum_{i=1}^{N_k}w_i^k<\varepsilon
    \,\middle|\,x^k
\right)
&\le
\exp\left(
-\frac{2N_k(\bar Z_{\lambda,\delta}(x^k)-\varepsilon)^2}{C^2}
\right)  \\
&\le
\exp\left(
-\frac{2N_k(\underline Z_M-\varepsilon)^2}{C^2}
\right).
\end{aligned}
\]
Thus the conclusion holds with
\[
    c_{M,\varepsilon}
    :=
    \frac{2(\underline Z_M-\varepsilon)^2}{C^2}.
\]
Finally, by the tower property of conditional expectation,
\[
\begin{aligned}
\mathbb{P}\left(
    \frac1{N_k}\sum_{i=1}^{N_k}w_i^k<\varepsilon
\right)
&=
\mathbb{E}\left[
\mathbb{P}\left(
    \frac1{N_k}\sum_{i=1}^{N_k}w_i^k<\varepsilon
    \,\middle|\, x^k
\right)
\right] \\
&\le
\mathbb{E}\left[
\exp(-c_{M,\varepsilon}N_k)
\right] \\
&=
\exp(-c_{M,\varepsilon}N_k).
\end{aligned}
\]
\end{proof}

\section{Accurate approximation of ZOPO for \(x\mapsto |x|\)}
\label{app:zopo_||}

The Zeroth Order Proximal Operator of \(f:x\mapsto|x|\) can be written as
\begin{equation}
    \zprox^{\delta}_{\lambda, f}(x) = \frac{\int_{\mathbb{R}} y \exp\left(-\frac{|y|}{\delta}\right) \exp\left(-\frac{(y-x)^2}{2\lambda\delta}\right) dy}{\int_{\mathbb{R}} \exp\left(-\frac{|y|}{\delta}\right) \exp\left(-\frac{(y-x)^2}{2\lambda\delta}\right) dy}.
\end{equation}

Evaluating these integrals yields the exact closed-form expression:
\begin{equation}
    \label{eq:zopo_unstable}
    \zprox^{\delta}_{\lambda, f}(x) = x + \lambda \left[ \frac{\exp\left(\frac{x}{\delta}\right) \operatorname{erfc}(z_1) - \exp\left(-\frac{x}{\delta}\right) \operatorname{erfc}(z_2)}{\exp\left(\frac{x}{\delta}\right) \operatorname{erfc}(z_1) + \exp\left(-\frac{x}{\delta}\right) \operatorname{erfc}(z_2)} \right],
\end{equation}
where \(\operatorname{erfc}:z\mapsto \frac{2}{\sqrt{\pi}}\int_z^{+\infty}\exp(-t^2)dt\), \(z_1 = \frac{x + \lambda}{\sqrt{2\lambda\delta}}\) and \(z_2 = \frac{\lambda - x}{\sqrt{2\lambda\delta}}\).

To resolve the potential stability issue due to the exponential terms, we can use the scaled complementary error function, \(\operatorname{erfcx}:z\mapsto \exp(z^2)\operatorname{erfc}(z)\). Substituting \(\operatorname{erfc}(z) = \exp(-z^2)\operatorname{erfcx}(z)\) into Equation~\eqref{eq:zopo_unstable} allows the problematic exponential terms to factor out completely. This algebraic cancellation yields a mathematically equivalent but numerically robust formulation:
\begin{equation}
    \label{eq:zopo_stable}
    \zprox^{\delta}_{\lambda, f}(x) = x + \lambda \left[ \frac{\operatorname{erfcx}(z_1) - \operatorname{erfcx}(z_2)}{\operatorname{erfcx}(z_1) + \operatorname{erfcx}(z_2)} \right].
\end{equation}
By exploiting the function \texttt{erfcx} from Scipy, this formulation allows us to compute \(\zprox^{\delta}_{\lambda, f}\) accurately for small values of \(\delta\) in Figure~\ref{fig:intro_sampling}.

\section{Asymptotic and non-asymptotic behavior of S-ZOPO for parabolas}
\label{app:asymptotic_behavior_x2}

We start by writing explicitly the asymptotic rate of the bias and the variance of S-ZOPO, as demonstrated respectively by \cite{hesterberg1988advances} and \cite{agapiou2017importance}
\begin{equation}
    \mathbb{E}\left[\szprox^{\delta,N}_{\lambda, f}(x)\right]=\zprox^{\delta}_{\lambda, f}(x)+\frac{C_1(x,\lambda,\delta)}{N}+o(N^{-1}),
\end{equation}
and 
\begin{equation}
    \mathbb{E}\left\|\szprox^{\delta,N}_{\lambda, f}(x)-\zprox^{\delta}_{\lambda, f}(x)\right\|^2=\frac{C_2(x,\lambda,\delta)}{N}+o(N^{-1}),
\end{equation}
where, denoting \(T=\zprox^{\delta}_{\lambda, f}\), 
{\small\[C_1(x,\lambda,\delta)=\frac{T(x)\mathrm{Var}_{y\in\mathcal{N}(x,\lambda\delta I)}(\exp(-f(y)/\delta))-\mathrm{Cov}_{y\in\mathcal{N}(x,\lambda\delta I)}\left[y\exp(-f(y)/\delta)),\exp(-f(y)/\delta))\right]}{\mathbb{E}_{y\sim\mathcal{N}(x,\lambda\delta I)}\left[\exp(-f(y)/\delta)\right]^2},\]}
and  \[C_2(x,\lambda,\delta)=\frac{\mathbb{E}_{y\sim\mathcal{N}(x,\lambda\delta I)}\left[\exp(-2f(y)/\delta))\|y-T(x)\|^2\right]}{\left(\mathbb{E}_{y\sim\mathcal{N}(x,\lambda\delta I)}\left[\exp(-f(y)/\delta))\right]\right)^2}.\]

\paragraph{The quadratic case.} To study how \(\szprox^{\delta,N}_{\lambda, f}\) behaves, we propose to look into a simple case for which \(\zprox^{\delta}_{\lambda, f}\) and all the related quantities are explicitly computable, i.e.
\[f:y\mapsto \frac{1}{2C}\|y-\mu\|^2,\quad \mu\in\R^d.\]
Indeed, elementary computations show that
\[\left(2\pi\lambda\delta\right)^{-\frac{d}{2}}\int_{\R^d}\exp\left(-\frac{\|y-\mu\|^2}{2C\delta}\right)\exp\left(-\frac{\|y-x\|^2}{2\lambda\delta}\right)dy=\left(\frac{C}{C+\lambda}\right)^\frac{d}{2}\exp\left(-\frac{\|x-\mu\|^2}{2\delta(C+\lambda)}\right),\]
and
\[\left(2\pi\lambda\delta\right)^{-\frac{d}{2}}\int_{\R^d}y\exp\left(-\frac{\|y-\mu\|^2}{2C\delta}\right)\exp\left(-\frac{\|y-x\|^2}{2\lambda\delta}\right)dy=\frac{\lambda\mu+Cx}{C+\lambda}\left(\frac{C}{C+\lambda}\right)^\frac{d}{2}\exp\left(-\frac{\|x-\mu\|^2}{2\delta(C+\lambda)}\right).\]
It follows that for any \(x\in\R^d\), \(\zprox^{\delta}_{\lambda, f}(x)=\frac{\lambda\mu+Cx}{C+\lambda}=\prox_{\lambda f}(x)\). It is also possible to compute exactly the constants \(C_1(x,\lambda,\delta)\) and \(C_2(x,\lambda,\delta)\) appearing in the asymptotic expressions of the bias and the variance of \(\szprox^{\delta,N}_{\lambda, f}(x)\). Following tedious but elementary computations, we get that
\[C_1(x,\lambda,\delta)=\frac{\lambda(x-\mu)}{(2\lambda+C)(\lambda+C)}\phi(x,\mu,\lambda,\delta,C),\]
and
\[C_2(x,\lambda,\delta)=\left(\frac{\lambda^2C^2}{(2\lambda+C)^2(\lambda+C)^2}\|x-\mu\|^2+\frac{d\lambda\delta C}{2\lambda+C}\right)\phi(x,\mu,\lambda,\delta,C),\]
where \[\phi(x,\mu,\lambda,\delta,C)=\left(\frac{\lambda+C}{\sqrt{C(2\lambda+C)}}\right)^d\exp\left(\frac{\lambda\|x-\mu\|^2}{\delta(2\lambda+C)(\lambda+C)}\right).\]

It appears that these constants depend on a quantity \(\phi\) which grows exponentially with \(\frac{\|x-\mu\|^2}{\delta}\), unveiling a significant variance and bias when \(x\) is far from the minimizer \(\mu\) or \(\delta\) is small. However, as shown in Figure~\ref{fig:the_vs_emp}, the behavior of the variance predicted by this asymptotic estimate does not hold for fixed \(N\) when \(\|x-\mu\|\) grows.

\begin{figure}[H]
    \centering
    \includegraphics[width=0.9\linewidth]{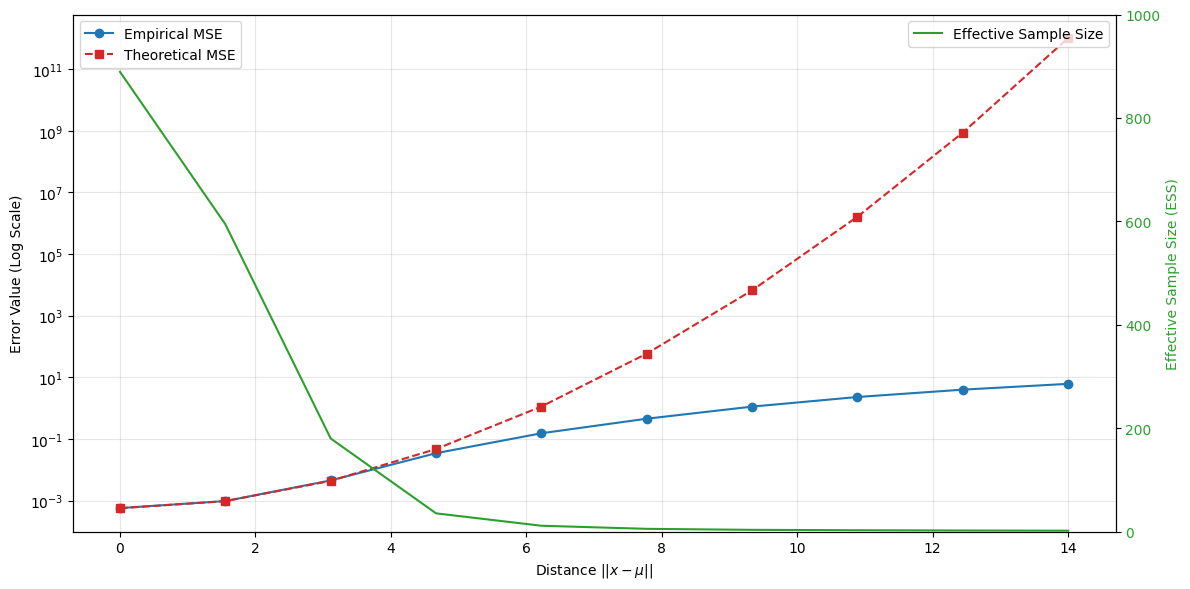}
    \caption{Theoretical asymptotic variance and empirical variance of \(\szprox^{\delta,N}_{\lambda, f}(x)\) w.r.t. to the distance to the minimizer for a quadratic \(f\). The empiric ESS is displayed in green with the corresponding values on the right axis. The constants used here are \(d=2\), \(\delta = 1\), \(\lambda = 0.5\) and \(C = 1\), for \(N=1000\) samples and \(500\) trials per point.}
    \label{fig:the_vs_emp}
\end{figure}

\section{Experiments settings}\label{app:exp_setting}

All simulations were implemented in Python and run on a laptop with an Intel Core i7-1165G7 CPU @ 2.80GHz and 8 GB of RAM.

\subsection{Experiment of Section \ref{sec:exp_lambdarole}}\label{app:exp_setting_lambda} For the generation of Figure~\ref{fig:varying_lambda} we have chosen the following setting. We set $\delta = 1$, $N = 10000$ and total iteration number of $K_{\max}=1000$. As 1D function we have set $f(x) = 0.3 x^2 + \sin(4x) + 0.5 \cos(20x)$ and the initial point was set to $x^0=3$. For the benchmark problems we have set $d=10$ and performed $20$ different runs with different initializations and reported the error bars in Figure~\ref{fig:varying_lambda}. The values for $\lambda$ fixed are $0.01, 0.1, 1$ and $10$ while the schedule for the decreasing $\lambda$ are in a geometric scale from $10$ to $0.01$ with $10$ intermidiate values.

Figure~\ref{fig:varying_lambda} consistently shows that letting $\lambda$ vary from $10$ to $0.01$ is beneficial for the algorithm. This is numerical evidence showing that decreasing $\lambda$ eventually leads the algorithm to stationary points of $f$ and helps escaping from poor stationary points. This is further backed up by our theory, since we prove that for small $\lambda$ the final points are almost stationary for $f$ in Theorem~\ref{thm:rates} and we prove that for large $\lambda$ the landscape of $f^{\lambda,\delta}$ attains a convexification effect in Theorem~\ref{thm:convexification}.

\subsection{Experiment setting for Section \ref{sec:sampling_challenges} and the introduction}\label{app:exp_setting_delta}

For the experiments in Figure~\ref{fig:intro_sampling} and Figure~\ref{fig:bias+var_vs_N}, we consider the one-dimensional function
\[
    f:x\mapsto |x|,
\]
with initial point \(x=2\) and proximal parameter \(\lambda=1\). The exact proximal point is therefore
\[
    \prox_f(2)=1.
\]
The reference value of \(\zprox^\delta_{1,f}(2)\) is computed using the closed-form expression derived in Appendix~\ref{app:zopo_||}, relying on the numerically stable scaled complementary error function \(\operatorname{erfcx}\), implemented through \texttt{scipy.special.erfcx}.

For the S-ZOPO approximation, samples are drawn according to
\[
    y_i\sim \mathcal N(2,\delta),
\]
and the estimator is computed as
\[
    \szprox^{\delta,N}_{1,f}(2)
    =
    \frac{\sum_{i=1}^N y_i\exp(-|y_i|/\delta)}
    {\sum_{i=1}^N \exp(-|y_i|/\delta)}.
\]
The effective sample size is computed as
\[
    \widehat{\operatorname{ESS}}
    =
    \frac{\left(\sum_{i=1}^N w_i\right)^2}
    {\sum_{i=1}^N w_i^2},
    \qquad
    w_i=\exp(-|y_i|/\delta).
\]

In Figure~\ref{fig:intro_sampling}, we use \(N=5000\) samples and report the average over \(100\) independent trials. In Figure~\ref{fig:bias+var_vs_N}, the empirical bias and variance are computed over \(100\) independent trials for \(N\in\{50,500,5000\}\). The values of \(\delta\) are sampled logarithmically.

\end{document}